\newcommand{\ben}{\begin{enumerate}}
\newcommand{\een}{\end{enumerate}}
\newcommand{\beq}{\begin{quote}}
\newcommand{\enq}{\end{quote}}
\newtheorem{thm}{Theorem}[section]
\newtheorem{cor}{Corollary}[section]
\newtheorem{lemma}[thm]{Lemma}
\newtheorem{rmk}[thm]{Remark}
\newtheorem{definition}[thm]{Definition}
\newtheorem{prop}[thm]{Proposition}
\newcommand{\bthm}{\begin{thm}}
\newcommand{\ethm}{\end{thm}}
\newcommand{\bprf}{\begin{proof}}
\newcommand{\eprf}{\end{proof}}
\newtheorem{example}[thm]{Example}
\begin{document}

\title{Bott-Samelson-Demazure-Hansen Varieties 
for Projective Homogeneous Varieties with Nonreduced Stabilizers }
\author{Siqing Zhang}
\date{}

\maketitle

\begin{abstract}
Over a field of positive characteristic, a semisimple algebraic group $G$ may have some nonreduced parabolic subgroup $P$. In this paper, we study the Schubert and Bott-Samelson-Demazure-Hansen (BSDH) varieties of $G/P$, with $P$ nonreduced, when the base field is perfect. 
    It is shown that in general the Schubert and BSDH varieties of such a $G/P$ are not normal, and  the projection of the BSDH variety onto the Schubert variety has nonreduced fibers at closed points.
    When the base field is finite, the generalized convolution morphisms between BSDH varieties (as in \cite{dCHL}) are also studied. It is shown that the decomposition theorem holds for such morphisms, and the pushforward of intersection complexes by such morphisms are Frobenius semisimple. 
\end{abstract}
%%   We insist on abstract in your paper!

\section{Introduction}

Let $G$ be a connected split semisimple linear algebraic group over a perfect field $k$. A parabolic subgroup $P$ is a sub-group scheme of $G$ that contains a Borel subgroup. 
When char$k>0$, such a parabolic $P$ can be nonreduced. 
In \cite[Theorem 14]{Wenzel}, Wenzel gives a classification of the nonreduced parabolics over an algebraically closed field $k$ of char$(k)>3$. 

It is natural to consider the quotient $G/P$ where $P$ is nonreduced. The quotient $G/P$ is a projective $G$-homogeneous variety with a nonreduced stabilizer $P$. \\

In contrast to the classical flag varieties, which enjoy many nice properties, the quotient $G/P$, with $P$ nonreduced, is exotic, as demonstrated by the following examples:

Haboush and Lauritzen, whose works are over algebraically closed $k$, showed that some $G/P$, with nonreduced $P$, 
\begin{enumerate}
\item is not Frobenius split \cite[Theorem 5.2]{Lauthesis}; 
\item is not $\mathcal{D}$-affine
\cite[\S4.4]{Lau97}, i.e., it has a $\mathcal{D}$-module, which is quasi-coherent as an $\mathcal{O}$-module, with nonvanishing higher cohomology;
\item violates the Kodaira and Kempf vanishing theorems \cite[Ex.\;6.3.1-2]{Lauthesis}; 
\item admits ample line bundles with negative Euler characteristic \cite[Ex.\;6.3.3]{Lauthesis}, \item does not admit a flat lift to $ \mathbb{Z}$ \cite[56]{HL}.
\end{enumerate}

In \cite[Cor.\;2.2]{Totaro}, Totaro shows that the affine cone over certain $G/P$, with $P$ nonreduced, has an isolated terminal singularity that is not Cohen-Macaulay. This is the first discovery of such singularity.  

These properties make $G/P$, with $P$ nonreduced, a curious object to study.

In this paper, we define and study the  Bott-Samelson-Demazure-Hansen (BSDH) varieties for $G/P$, with $P$ nonreduced. To our knowledge, such study is new even for $k=\bar{k}$. A slogan for these newly constructed BSDH varieties is that "geometry is wild, and topology is nice". Here is what we mean:

Given a positive integer $r$ and an $r$-tuple $w_{\bullet}=(w_1,...,w_r)$ of elements in the Weyl group, we define a BSDH variety $X_P(w_{\bullet})$ which coincides with the classical BSDH variety if $P$ is reduced. Just as in the classical case, the BSDH variety admits canonical projections to $X_P(w_i)$ for each $i=1,...,r$. 
Given $Q$ a parabolic containing $P$, we can define some morphisms from BSDH varieties for $P$ to some BSDH varieties for $Q$. These morphisms are called the generalized convolution morphisms, and they are generalizations of both the canonical projections mentioned above and the canonical morphism $G/P\to G/Q$. 

"Geometry is wild": in Section \ref{geo} we show that the newly constructed BSDH variety $X_P(w_{\bullet})$, with $P$ nonreduced,
\begin{enumerate}
\item
is in general not an iterated fibration with fibers isomorphic to $X_P(w_i)$'s, instead, Examples \ref{Nonreduced Fibers of $p_1$}, \ref{sl52} and \ref{nonreducedfiber} show that the fibers of the first projection $p_1: X_P(w_{\bullet})\to X_P(w_1)$ in general are nonreduced. Furthermore, Ex.\;\ref{lastprojection2} shows that the fibers of the last projection $X_P(w_1,...,w_r)\to X_P(w_r)$ in general are also nonreduced; 
\item
is in general not normal. We give an example of a non-normal BSDH variety $X_P(s_1,s_2)$ in Ex.\;\ref{nonnormalbsdh}. 
In Ex.\;\ref{Nonnormalschubert},
for each $n\ge 2$, we give examples of $n$ dimensional non-normal Schubert varieties.
\end{enumerate}

"Topology is nice": 
in Section \ref{top} we show that over a finite or algebraically closed field $k$,
\begin{enumerate}
\item 
The decomposition theorem package holds for any generalized convolution morphism (defined in Def.\;\ref{generalizedconvolution}); 
\item 
The intersection complex $\mathcal{IC}_{X_P(w_{\bullet})}$, and the derived pushforward of it by a generalized convolution morphism, are semisimple, Frobenius semisimple, very pure of weight zero, even and Tate (definitions in \ref{gooddef}). 
\end{enumerate}
When $P$ is reduced, these were shown for the classical and some infinite dimensional versions of $X_P(w_{\bullet})$ in \cite{BGS}, \cite{BY}, \cite{AR}, and \cite{dCHL}. Here we take the finite dimensional part of their results and use the fact that the newly constructed BSDH varieties are universally homeomorphic to the classical BSDH varieties. \\

Here is the plan for this paper:

Firstly, we fix the notation.
In the second section, we study the Schubert varieties, the Richardson varieties, and the integral Chow ring of $G/P$, with $P$ nonreduced, over a perfect field. 
In the third section, we define the BSDH varieties $X_P(w_1,...,w_r)=X_P(w_{\bullet})$ for $G/P$ with $P$ nonreduced, and discuss some other possible ways to define them.
In the fourth section, we show the wild geometric properties of the newly constructed BSDH varieties listed above. 
In the fifth section, we show the nice topological properties of $X_P(w_{\bullet})$, with $P$ nonreduced, over a finite or algebraically closed field, as stated above. 

\subsection{Notation}\label{not}
Let $k$ be a perfect field with characteristic $p>0$.

All the schemes in this paper are separated and of finite type.

$G$ is a connected split semisimple linear algebraic group over $k$. Fix a split maximal torus $T$ in $G$ and $B$ a Borel subgroup containing $T$. Let $W$ be the Weyl group.

$P$ denotes a not necessarily reduced parabolic subgroup of $G$ that contains $B$. The maximal reduced subscheme of $P$ is denoted by $P_{red}$.

The set of roots $R$ is inside the character lattice $X(T)$ of $T$. For each root $\alpha\in R$, $-\alpha\in R\subset X(T)$ is the opposite root, and there is a root homomorphism $x_{\alpha}:\mathbb{G}_a\to G$. Let the subgroup $U(\alpha)$ be the image of $x_{\alpha}$. 

Let $\Delta\subset R$ be the subset of simple roots such that the corresponding set of the positive roots consists of the roots of $B$.

For every subset $I\subset R$, define $U(I)$ to be the subscheme of $G$ that is the image of the product morphism $(\prod_{\alpha\in I} x_{\alpha})(\mathbb{G}_a^{\# I})\subset G$. Note that $U(I)$ may not be a subgroup of $G$.

The unipotent radical $R_u(Q)$ of a reduced parabolic $Q$ is $U(I)$ for a unique subset $I\subset R$. In this case we also write $Q=P_I$. The opposite unipotent $R_u^-(Q)$ is $U(-I)$.  

\textbf{Note}: our notation $P_I$ differs from the standard textbooks, e.g. \cite{Springer}, \cite{Jantzen}, and \cite{MilneGrp} : in those books our $P_I$ is their $P_{\Delta\setminus \pm I}$. We opt for the lighter notation because we will be using the symbols $P_I$ and $U(I)$ a lot. 

Let $\mathbb{G}_{a,n}:= \text{Spec}(k[Y]/(Y^{p^n})$ be the additive infinitesimal group schemes, and $U(\alpha,n):= Image(x_{\alpha}(\mathbb{G}_{a,n}))$.

Define $\mathbb{G}_{a,\infty}:= \mathbb{G}_{a}$ and $U(\alpha,\infty):=U(\alpha)$.

Let $L_I=T\cdot U(R\setminus \pm I)$ be the Levi factor of $P_I$. Let $W^I\subset W$ be the Weyl group for $(L_I,T)$. Let $W_I\subset W$ be the sub-poset consisting of the longest representatives for the classes in the double coset $W^I\backslash W/ W^I$. 

For a subset $I\subset R$ and a function $\vec{n}:I\to \mathbb{N}_{>0}\cup\{\infty\}$,
$\vec{n}(\alpha)=n_{\alpha}$, define the subscheme of $G$
\[U(I,\vec{n}):= \prod_{\alpha\in I} x_{\alpha} (\mathbb{G}_{a,n_{\alpha}})=\prod_{\alpha \in I} U(\alpha, n_{\alpha}).\]

For $w\in W_I$, define the subscheme of $G$
\[U(w(I,\vec{n})):= \prod_{\alpha\in I} x_{w(\alpha)}(\mathbb{G}_{a,n_{\alpha}})= \prod_{\alpha\in I} U(w(\alpha),n_{\alpha}).\]

Recall a reduced scheme $X$ is \textit{paved by affine spaces} if there exists a sequence of closed subschemes $\emptyset\subset X_0\subset X_1...\subset X_n=X$ so that each $X_i\setminus X_{i-1}$ is isomorphic to a disjoint union of affine spaces $\mathbb{A}^{n_i}$ for some $n_i\in \mathbb{N}$. 
When $P$ is reduced, an affine paving of $G/P$ is given by the Bruhat decomposition and the Bruhat order:
\begin{equation}
\label{01}
    G/P=\coprod_{w\in W/W^I} B wP/P.
\end{equation}
We call ${}^{B}\!X_P'(w)=B wP/P$ (resp. ${}^{B}\!X_P(w)=\overline{B w P/P}$) the Schubert cell (resp. Schubert varieties) of $G/P$ corresponding to $w$.

When $P$ is reduced, there is another decomposition of $G/P$ as 
\begin{equation}
\label{02}
    G/P=\coprod_{w\in W_I}PwP/P.
\end{equation}
We denote $X_P'(w)=PwP/P$ and $X_P(w)=\overline{PwP/P}$.

Let $l$ be a prime number so that $l \ne char(k)=p$. When the field $k$ is finite, let $D_m^b(X,\overline{\mathbb{Q}_l})$ be the bounded mixed constructible derived category with the middle perversity $t$-structure as in \cite{BBD}. We denote the derived direct image $Rf_*$ just as $f_*$.\\

About the name of $G/P$ with $P$ nonreduced: $G/P$ has been originally called the varieties of unseparated flag (vufs) by Haboush, Lauritzen, and Wenzel, e.g. in \cite{HL}, and the projective pseudo-homogeneous spaces in \cite{Srimathy}. 
However, $G/P$ is a separated scheme and the action of $G$ on $G/P$ is transitive. 
To avoid further confusion, we just write them as $G/P$ and emphasize that $P$ is nonreduced when needed. 

 \subsection*{Acknowledgements}
I would like to thank my advisor, Mark de Cataldo, for many useful conversations. I would like to thank Michel Brion for very helpful comments on this paper. I learned about the existence of nonreduced parabolics from a course given by him in the 2019 graduate summer school on the Geometry and Modular Representation Theory of Algebraic Groups, held in the Simons Center. I would like to thank Thomas Haines for useful email exchanges. I would also like to thank many valuable comments from the anonymous referees.

\section{Basic Structures}\label{basic}

In this section, we first show some basic structure of the nonreduced parabolics in Prop.\;\ref{propP}. 
We then describe the integral Chow ring of $G/P$ with $P$ nonreduced in Prop.\;\ref{Chowring}. 
These results are stated and proved in \cite{Wenzel} and \cite{Lau97} when $k=\bar{k}$. 
Their proofs remain valid when $k$ is perfect.
We then describe a local product structure of the big cells of $G/P$ in terms of Schubert and Richardson varieties in Prop.\;\ref{opposite}.

\begin{prop} 
\label{propP}
With the settings in \ref{not},
\begin{enumerate}
\item
For every parabolic $P$, reduced or not, in $G$, $P_{red}$ is a parabolic subgroup of $G$, the unipotent radical of $P_{red}$ is $R_u(P_{red})=U(I)$ for a unique subset $I\subset R$, i.e., $P_{red}=P_I$. Further, 
$P=P_I\cdot (P\cap U(-I))$, i.e., $P$ is the image of the multiplication morphism over $k$, $P_I\times (P\cap U(-I))\to G$, where $P\cap U(-I)$ is the scheme-theoretic intersection.
\item
\[P\cap U(-I)=U(J,\vec{n})\cong \prod_{\beta\in J} Spec(k[T]/(T^{p^{n_{\beta}}})),\]
for a uniquely determined subset $J\subset -I$ and uniquely determined $n_{\beta}<\infty$ for each $\beta\in J$. 
\end{enumerate}
\end{prop}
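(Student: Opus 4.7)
The plan is to reduce Part (1) to the classical theory of reduced parabolics via a perfectness argument, and to deduce Part (2) from the $T$-equivariant structure of the infinitesimal subgroup $P\cap U(-I)$, following the approach in \cite{Wenzel} and \cite{Lau97}.

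For Part (1), perfectness of $k$ ensures that $P_{red}\times_k P_{red}$ is reduced, so the multiplication on $P$ restricts to $P_{red}\times P_{red}\to P_{red}$, making $P_{red}$ a $k$-subgroup of $G$. Since $P$ and $P_{red}$ share underlying topological space, $G/P_{red}$ is proper, so $P_{red}$ is a reduced parabolic containing $B$; the classical theory identifies it with $P_I$ for a unique $I\subseteq R$, with $R_u(P_I)=U(I)$. For the factorization $P = P_I\cdot(P\cap U(-I))$ I would use the opposite big cell: multiplication defines an open immersion $\mu : U(-I)\times P_I \hookrightarrow G$, and since $P_I\subseteq P$, the functor of points gives $\mu^{-1}(P) = (P\cap U(-I))\times P_I$ scheme-theoretically. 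Consequently $P\cap U(-I) \hookrightarrow P/P_I$ is an open immersion. But $P/P_I$ is a connected scheme of dimension $\dim P - \dim P_I = 0$, hence an infinitesimal scheme concentrated at a single point; any nonempty open subscheme must equal the whole, so the map is an isomorphism, and the factorization follows.

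For Part (2), $H := P\cap U(-I)$ is a finite infinitesimal subgroup scheme of $U(-I)$, normalized by $T\subseteq B\subseteq P$. The torus acts on $U(-I)\cong\prod_{\beta\in -I} U(\beta)\cong\prod_{\beta\in -I}\mathbb{A}^{1}$ diagonally, through the distinct characters $\beta$ on each factor. Since $T$ is split over the perfect $k$, its representation on the finite-dimensional coordinate ring $\mathcal{O}(H)$ decomposes into weight spaces; together with the distinctness of the weights carried by the different coordinates, this forces $H$ to be the scheme-theoretic product $\prod_{\beta\in J} H_\beta$ of its projections $H_\beta\subseteq U(\beta)$, for a unique $J\subseteq -I$. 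Each nontrivial $T$-stable infinitesimal subgroup of $U(\beta)\cong \mathbb{G}_a$ (with nontrivial $T$-weight) is defined by a homogeneous ideal of $k[Y]$, which must be generated by $Y^{p^{n_\beta}}$, i.e.\ $H_\beta = U(\beta, n_\beta)$.

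The main subtlety is the scheme-theoretic splitting of $H$: because $U(-I)$ is a product of the $U(\beta)$ only as a scheme and not as a group, the Chevalley commutation relations $[U(\beta), U(\beta')]\subseteq \prod_{m,n>0} U(m\beta+n\beta')$ could in principle obstruct the multiplicative splitting of the group law on $H$. One has to verify that the $T$-equivariant product decomposition of $\mathcal{O}(H)$ obtained above is moreover a Hopf-algebra decomposition making each $H_\beta$ a subgroup scheme; this reduces, by induction on a Chevalley-style ordering of the roots in $-I$, to the case of commuting root subgroups, where the statement is immediate. All constructions above are $\mathrm{Gal}(\bar k/k)$-equivariant, so the argument descends from $\bar k$ to the perfect base $k$.
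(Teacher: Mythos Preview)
Your proof is correct and follows essentially the same strategy as the paper. For Part~(1), both arguments use perfectness to see $P_{red}=P_I$ and then the big cell $U(-I)\cdot P_I$; the paper shows $P$ lies in the big cell by a topological contradiction with its complement $Z$, while you reach the same conclusion by observing that $P\cap U(-I)\hookrightarrow P/P_I$ is a nonempty open in a connected $0$-dimensional scheme---a mild and pleasant reformulation. For Part~(2), the paper simply invokes Wenzel's argument (Lemma~5 through Theorem~10(i) in \cite{Wenzel}) as valid over any field, and your sketch via the $T$-weight decomposition together with the inductive handling of the Chevalley commutation relations is exactly that argument.
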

\begin{proof}
When $k$ is algebraically closed and $p>3$, both items above were proved in \cite[Th.\;4, Th.\;10(i)]{Wenzel}. Our proof is similar in spirit.

Item (2) follows from item (1) the same way as in the argument from Lemma 5 to Theorem 10 (i) in \cite{Wenzel}. In fact, the argument there over algebraically closed fields holds verbatim over any field. Therefore it is enough to show item (1).

Because $k$ is perfect, $P_{red}$ is a reduced parabolic subgroup scheme of $G$ \cite[Cor.\;1.39]{MilneGrp}, therefore $P_{red}=P_I$ for a unique subset $I\subset R$ \cite[Th.\;21.91]{MilneGrp}. 

Clearly, $P$ contains $P_I\cdot (P\cap U(-I))$ as a subscheme. If $P$ is a subscheme of $P_I\cdot U(-I)$, then $P$ is a subscheme of $P_I\cdot (P\cap U(-I))$. 
Hence suffices to show $P$ is a subscheme of $P_I\cdot U(-I)$. 

By \cite[162]{Jantzen}, $P_I\cdot U(-I)$ is an open and dense subscheme of $G$, with a complement closed subscheme $Z\subset G$. 

If $P$ is not a subscheme of $P_I.U(-I)$, then the scheme $P\cap Z$ is not empty, thus $(P_I\cap Z_{red})_{red}=(P\cap Z)_{red}$ \cite[Sec.\;I-5.1.8]{EGA} is nonempty. Therefore $P_I\cap Z \ne \emptyset$, which contradicts the definition of $Z$.

\end{proof}

\begin{rmk}
Let us keep the notation from Prop.\;\ref{propP}.
When $k$ has characteristic $p>3$ or when $G$ is simply laced, it is proved in \cite[Th.\;10.(iii)]{Wenzel} that  
\begin{equation}
\label{func}
n_{\beta}=\min\{n_{\delta}|\; \delta\in \Delta\cap J,\; \langle \beta^{\vee}, \delta\rangle  \ne 0\},
\end{equation}
where $\beta^{\vee}\in X^{\vee}(T)$ is the dual root of $\beta$ in the cocharacter lattice. Let 
$$\mathcal{S}:= \{\vec{n}: \Delta^-\to \mathbb{N}_{>0}\cup \{\infty\}|\; \vec{n} \text{ is not constant }\},$$
where $\Delta^-$ is the set of negative simple roots.
The equality (\ref{func}) means that for a fixed Borel subgroup, there is a bijection $f: \mathcal{S}\xrightarrow{\sim} \mathcal{P}$, where $\mathcal{P}$ is the set of nonreduced parabolics containing $B$. \\
When $p\le 3$ it is shown in \cite[Rmk.\;15]{Wenzel} that in general $f$ is only an injection.
\end{rmk}

\begin{rmk}
When $k$ is imperfect, there exists an algebraic subgroup scheme $H\subset \mathbb{G}_a^n$ over $k$ whose reduced part $H_{red}$ is not a group \cite[Sec.\;$VI_A$.1.3.2]{SGA}. This leads to a natural question of which we do not know the answer: does there exist a parabolic subgroup $P$ of a linear algebraic group $G$ so that $P_{red}$ is not a group anymore? 
\end{rmk}

From the general properties of quotients by subgroup schemes of a smooth algebraic group \cite[Thm.\;2.7.2.(2)]{brion2017some}, we see that $G/P$ is a smooth scheme and a homogeneous space. The natural map $\pi: G/P_{red}\to G/P$ is purely inseparable and the fiber of $\pi$ over the identity flag $F_{id}\in G/P$ is the infinitesimal scheme $P/P_{red}$ \cite[Rmk.\;7.16.(a)]{MilneGrp}. 

The following proposition establishes the Bruhat decomposition for $G/P$ when $P$ is nonreduced. This can be seen as a manifestation of the slogan "topology is nice" that will be discussed in detail in Section 5. 

We call the images under $\pi$ of the Schubert cells (resp. varieties) of $G/P_{red}$ as the Schubert cells (resp. varieties) of $G/P$, and denote them as \[{}^B\!X_P'(w):=\pi({}^B\!X_{P_{red}}'(w)),\;\;{}^B\!X_P(w):=\pi({}^B\!X_{P_{red}}(w)).\]

\begin{prop}
Let $P$ be a nonreduced parabolic of $G$ and let $\pi: G/P_{red}\to G/P$ be the natural quotient map. Then
\begin{enumerate}
\item The Schubert cells form an affine paving of $G/P$;
\item The integral Chow groups $Ch(G/P)$ and $Ch(G/P_{red})$ are isomorphic as abstract abelian groups.
\end{enumerate}
\end{prop}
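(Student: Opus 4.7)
My strategy is to exploit the fact that the quotient map $\pi: G/P_{red} \to G/P$ is a finite, surjective, purely inseparable morphism, hence a universal homeomorphism, and then transfer the classical Bruhat decomposition of $G/P_{red}$ down to $G/P$. The essential scheme-theoretic lemma is this: since $B$ is reduced, every closed subscheme of $B$ is reduced, so the scheme-theoretic stabilizer $B \cap wPw^{-1}$ is a reduced subscheme of $wPw^{-1}$. A reduced subscheme of $wPw^{-1}$ factors through $(wPw^{-1})_{red} = wP_{red}w^{-1}$, yielding the equality of closed subschemes
\[
B \cap wPw^{-1} \;=\; B \cap wP_{red}w^{-1}.
\]

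For part (1), this equality of stabilizers shows that the $B$-orbit map identifies
\[
{}^B\!X_P'(w) \;=\; BwP/P \;\cong\; B/(B \cap wPw^{-1}) \;=\; B/(B \cap wP_{red}w^{-1}) \;=\; {}^B\!X_{P_{red}}'(w)
\]
\emph{as schemes}, so by the classical theory each cell is isomorphic to an affine space $\mathbb{A}^{\ell(w)}$. Because $\pi$ is a universal homeomorphism, it induces a bijection between the families $\{{}^B\!X_{P_{red}}'(w)\}_w$ and $\{{}^B\!X_P'(w)\}_w$ and between their closures, so the Bruhat decomposition (\ref{01}) of $G/P_{red}$ transfers to a set-theoretic partition of $G/P$. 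The increasing union $X_n := \bigcup_{\ell(w) \le n} {}^B\!X_P(w)$, endowed with its reduced scheme structure, then furnishes the required filtration by closed subschemes whose successive differences are disjoint unions of affine spaces of constant dimension, which is the desired affine paving.

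For part (2), a standard cellular argument (e.g.\ Fulton, \emph{Intersection Theory}, Ex.~1.9.1), using the localization exact sequence and the fact that $Ch(\mathbb{A}^n)$ is $\mathbb{Z}$ in top degree and zero otherwise, shows that any scheme admitting such an affine paving has an integral Chow group freely generated by the classes of the cell closures. Applied to both $G/P$ and $G/P_{red}$, whose cells are indexed by the common set $W/W^I$ with matching dimensions (since $\pi$ is finite and surjective), this produces an abstract abelian group isomorphism
\[
Ch(G/P) \;\xrightarrow{\sim}\; Ch(G/P_{red}),\qquad [{}^B\!X_P(w)] \;\longmapsto\; [{}^B\!X_{P_{red}}(w)].
\]
The main obstacle is not topological—$\pi$ is automatically a homeomorphism—but the scheme-theoretic verification that each ${}^B\!X_P'(w)$ really is $\mathbb{A}^{\ell(w)}$ as a scheme, and this is exactly what the stabilizer computation above secures.
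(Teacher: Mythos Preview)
Your argument for part (2) is fine once part (1) is established, but the ``essential scheme-theoretic lemma'' you invoke for part (1) is false, and with it the equality $B \cap wPw^{-1} = B \cap wP_{red}w^{-1}$ fails in general. Closed subschemes of reduced schemes need not be reduced: already $\text{Spec}(k[t]/(t^2)) \hookrightarrow \mathbb{A}^1$ is a counterexample. Concretely, take $G = SL_2$, $B$ upper triangular, $P = B \cdot U(-\alpha, n)$ so that $P$ is cut out by $c^{p^n} = 0$ in coordinates $\begin{psmallmatrix} a & b \\ c & d \end{psmallmatrix}$. For $w = s_\alpha$ one computes that $sPs^{-1}$ is cut out by $b^{p^n} = 0$, hence
\[
B \cap sPs^{-1} \;=\; \{c = 0,\; b^{p^n} = 0\} \;\cong\; T \times \text{Spec}(k[b]/(b^{p^n})),
\]
which is nonreduced, whereas $B \cap sP_{red}s^{-1} = B \cap sBs^{-1} = T$. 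So the two stabilizers differ, and your identification ${}^B\!X_P'(w) \cong {}^B\!X_{P_{red}}'(w)$ via equal stabilizers does not go through.

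The conclusion you want is nonetheless true, but the mechanism is different: one must actually compute the quotient $B/(B \cap wPw^{-1})$ and see that it is still an affine space. In the $SL_2$ example above this amounts to $U(\alpha)/U(\alpha,n) \cong \mathbb{G}_a/\mathbb{G}_{a,n} \cong \mathbb{G}_a$ via the $p^n$-th power map. In general the restriction of $\pi$ to a Schubert cell ${}^B\!X'_{P_{red}}(w) \cong \mathbb{A}^{\ell(w)}$ is, in suitable root coordinates, the identity on some coordinates and a Frobenius power on the others (those indexed by roots in $w^{-1}(I) \cap J$), so its image is again $\mathbb{A}^{\ell(w)}$; this is the content of Lauritzen's argument cited in the paper and reproduced over perfect fields in \cite{Srimathy}. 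The degree $p^{d_w}$ appearing in Prop.~\ref{Chowring} records exactly this phenomenon. You should replace the stabilizer argument with this coordinate computation.
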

\begin{proof}
Both items are proved in \cite[Sec.\;2.3]{Lau97} in the context that $k$ is algebraically closed and $p>3$.

 Over perfect fields, both items are proved in \cite[Lem.\;5.1-2]{Srimathy} following Lauritzen's idea.
\end{proof}

The next proposition compares the ring structures of $Ch(G/P)$ and $Ch(G/P_{red})$.
It is stated and proved in \cite[6]{Lau97} over an algebraically closed field $k$. The same proof applies also in the case when $k$ is perfect.

Let $P$, $I$ and $J$ be as in Proposition \ref{propP}, we define 
$ d_w:=\sum_{\alpha\in w^{-1}(I)\cap J}n_{\alpha}$. Let $w_0\in W/W^I$ be the longest element.

\begin{prop}
\label{Chowring}
Let $P\subset G$ be a nonreduced parabolic. The pullback $\pi^*: Ch(G/P)\to Ch(G/P_{red})$ between the integral Chow rings is a ring embedding and the cokernel of $\pi^*$ is a $p^N$-torsion abelian group. Moreover, 
\[\pi_*[{}^B\!X_{P_I}(w)]=p^{d_w}[{}^B\!X_P(w)],\;\;\; \pi^*[{}^B\!X_P(w)]=p^{d_{w_0}-d_w}[{}^B\!X_{P_I}(w)].\]
\end{prop}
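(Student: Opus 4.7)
The plan is to reduce the entire proposition to a fiber-by-fiber analysis of $\pi:G/P_{red}\to G/P$. First I would verify that $\pi$ is finite and flat of degree $p^N$, where $N=\sum_{\beta\in J}n_{\beta}$: finiteness because $\pi$ is a bijective morphism of Noetherian schemes, and flatness by $G$-homogeneity, since every fiber is isomorphic to the infinitesimal group scheme $P/P_{red}\cong U(J,\vec{n})$ of length $p^N$ by Prop.\;\ref{propP}.

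For each minimal length representative $w\in W/W^I$, I would then compute the degree of $\pi\big|:{}^B\!X_{P_I}(w)\to{}^B\!X_P(w)$ by measuring the generic scheme-theoretic fiber. Under the standard parametrization $U(R^+\cap wR^-)\xrightarrow{\sim}BwP_I/P_I$, $u\mapsto uwP_I$, the fiber at $wP$ is the scheme $U(R^+\cap wR^-)\cap wPw^{-1}$; conjugating by $w^{-1}$ it becomes $U(w^{-1}R^+\cap R^-)\cap P$. The crucial input is the root-theoretic containment $w^{-1}R^+\cap R^-\subset -I$ for $w$ minimal: if $\alpha\in w^{-1}R^+\cap R^-$ then $-\alpha\in R^+$ satisfies $w(-\alpha)<0$, which forces $-\alpha\in I$ by the characterization $w(R^+\setminus I)\subset R^+$ of minimal coset representatives. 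Invoking $P\cap U(-I)=U(J,\vec{n})$ from Prop.\;\ref{propP}, the fiber becomes an infinitesimal product of group schemes of total length $p^{d_w}$, giving the pushforward formula $\pi_*[{}^B\!X_{P_I}(w)]=p^{d_w}[{}^B\!X_P(w)]$.

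The pullback formula then follows almost formally: $\pi$ being finite flat, $\pi^*[{}^B\!X_P(w)]$ is represented by the scheme-theoretic preimage $\pi^{-1}({}^B\!X_P(w))$, which is topologically supported on the reduced variety ${}^B\!X_{P_I}(w)$, so $\pi^*[{}^B\!X_P(w)]=m\cdot[{}^B\!X_{P_I}(w)]$ for some multiplicity $m$. Applying $\pi_*$ and combining with the pushforward formula and $\pi_*\pi^*=p^N\cdot\mathrm{id}$ (finite flat degree) forces $m=p^{N-d_w}=p^{d_{w_0}-d_w}$, using that $d_{w_0}=N$ since ${}^B\!X_{P_I}(w_0)=G/P_{red}$ and $\pi$ has total degree $p^N$. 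The map $\pi^*$ is a ring homomorphism as the flat pullback between smooth schemes, and is injective because it sends the Schubert $\mathbb{Z}$-basis of $Ch(G/P)$ to nonzero integer multiples of the Schubert basis of $Ch(G/P_{red})$. Composing the two formulas yields $\pi^*\pi_*=p^N\cdot\mathrm{id}$ on Schubert classes, hence on all of $Ch(G/P_{red})$; consequently $p^N\cdot Ch(G/P_{red})\subset\pi^*Ch(G/P)$, so the cokernel is annihilated by $p^N$.

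The main technical obstacle is the scheme-theoretic fiber computation in the second step: verifying the combinatorial inclusion $w^{-1}R^+\cap R^-\subset -I$ for minimal representatives, and then correctly identifying the infinitesimal scheme structure of $U(w^{-1}R^+\cap R^-)\cap P$ via the precise description of $P$ as the product $P_I\cdot U(J,\vec{n})$ from Prop.\;\ref{propP}.
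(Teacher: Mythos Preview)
The paper does not actually prove this proposition; it records that the result is established in \cite[\S 6]{Lau97} over an algebraically closed field and asserts that the same argument goes through when $k$ is perfect. Your proposal therefore supplies what the paper omits, and your outline is the standard one (and presumably Lauritzen's).

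Two small points deserve tightening. The justification for flatness of $\pi$ should not rest on constancy of the fibers alone; the clean reason is that $\pi$ is a finite surjection between smooth varieties of the same dimension, so miracle flatness applies. Likewise, ``bijective morphism of Noetherian schemes'' does not by itself force finiteness; use instead that $\pi$ is proper with finite (indeed infinitesimal) scheme-theoretic fibers.

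The one genuinely delicate step is the one you already flag: matching your generic-fiber exponent $\sum_{\alpha\in (w^{-1}R^{+}\cap R^{-})\cap J} n_{\alpha}$ with the paper's $d_w=\sum_{\alpha\in w^{-1}(I)\cap J}n_{\alpha}$. For $w$ a minimal coset representative and $\alpha\in J\subset -I$, the condition $w\alpha\in R^{+}$ is a priori weaker than $w\alpha\in I$, since $w\alpha$ might land in the positive Levi roots $R^{+}\setminus I$. You should either verify that for every $(J,\vec n)$ actually arising from a parabolic the two sums agree, or compare directly with the formulation in \cite{Lau97} to make sure the exponents are being read in the same convention. Once that bookkeeping is settled, the remainder of your argument (the projection formula $\pi_*\pi^*=p^{N}\cdot\mathrm{id}$, Schubert-basis injectivity, and the $p^{N}$-torsion cokernel) is correct and routine.
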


For any $w\in W/W^I$, let $Y_P'(w)$ be the $U(-I)$-orbit of the $T$-fixed point $wP/P$. Let $Y_P(w)$ be the closure of $Y_P'(w)$. When $P$ is reduced, $Y_P'(w)$ (resp. $Y_P(w)$) is called the opposite Schubert cell (resp. variety). It is easy to see that $Y_P(w)$ is the scheme theoretic image of $Y_{P_{red}}(w)$ under the natural morphism $G/P_{red}\to G/P$. 
The following Proposition \ref{opposite} can be proved in the same way as \cite[Lemma A4.(b)]{kl1979}.

\begin{prop}
\label{opposite}
Let $v\in W/W^I$ be such that $w>v$. We then have
\[{}^B\!X_{P}(w)\cap vY_{P}'(id)\cong {}^B\!X_P'(v)\times (Y'_{P}(v)\cap {}^B\!X_{P}(w)). \]
\end{prop}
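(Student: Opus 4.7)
My plan is to adapt the classical argument of \cite[Lem.~A4.(b)]{kl1979} to the nonreduced setting. The strategy has two main steps: first, exhibit a scheme-theoretic product decomposition of the open neighborhood $vY_P'(id) \subset G/P$ of the $T$-fixed point $vP/P$; second, use $B$-stability of ${}^B\!X_P(w)$ to descend this decomposition to the intersection with ${}^B\!X_P(w)$.

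For the first step, I would decompose the roots of $vU(-I)v^{-1} = U(v(-I))$ into positive and negative parts: $v(-I) = A \sqcup B'$ with $A := v(-I) \cap \Delta^+$ and $B' := v(-I) \cap \Delta^-$. Since $v(-I)$ and each of $A, B'$ are closed sets of roots (the former because $-I$ is closed and $v$ preserves addition of roots, the latter because sums of like-signed roots in a closed set remain in it), the Chevalley commutation relations with an appropriate ordering yield a scheme isomorphism $U(A) \times U(B') \xra{\sim} U(v(-I))$. Acting on $vP/P$, this should descend to an isomorphism of schemes
\[
vY_P'(id) \;\cong\; {}^B\!X_P'(v) \times Y_P'(v),
\]
where the first factor is identified via $U(A) \cdot vP/P = BvP/P = {}^B\!X_P'(v)$ (using $U = U(A) \cdot U(\Delta^+ \setminus v(-I))$ and that the second factor is contained in the stabilizer of $vP/P$), and the second via the identification of $Y_P'(v) = U(-I) \cdot vP/P$ as a subscheme of $vY_P'(id)$; the containment $Y_P'(v) \subseteq vY_P'(id)$ follows because $U(v^{-1}(-I)) = U(v^{-1}(-I) \cap -I) \cdot U(v^{-1}(-I) \setminus -I) \subseteq U(-I) \cdot P$ via the closedness arguments above applied to $v^{-1}(-I)$.

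For the second step, since ${}^B\!X_P(w) = \overline{BwP/P}$ is $B$-stable and $U(A) \subseteq U \subseteq B$, the intersection ${}^B\!X_P(w) \cap vY_P'(id)$ is $U(A)$-stable. Under the product decomposition from the first step, $U(A)$ acts freely by translation on the first factor, so any $U(A)$-stable closed subscheme must be of the form ${}^B\!X_P'(v) \times F$ for a unique closed $F \subseteq Y_P'(v)$. Restricting to the slice over the base point $vP/P \in {}^B\!X_P'(v)$ identifies $F = Y_P'(v) \cap {}^B\!X_P(w)$. The hypothesis $v \leq w$ ensures that $vP/P \in {}^B\!X_P(w)$, so $F$ is nonempty.

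The main obstacle is verifying the scheme-theoretic product decomposition in the first step when $P$ is nonreduced. The scheme-theoretic stabilizer of $vP/P$ in $U(v(-I))$ is the infinitesimal group $U(v(J), v_*\vec{n})$ from Prop.~\ref{propP}, and for the quotient to respect the product decomposition, this stabilizer must split as the product of its projections to $U(A)$ and $U(B')$. This does hold: $v(J) \subseteq v(-I)$ decomposes as $(v(J) \cap \Delta^+) \sqcup (v(J) \cap \Delta^-) \subseteq A \sqcup B'$, and each thickened root subgroup $U(\beta, n_\beta)$ with $\beta \in v(J)$ lies entirely in either $U(A)$ or $U(B')$ according to the sign of $\beta$. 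Thus the infinitesimal quotient distributes over the product, and the scheme-theoretic statement reduces to the classical decomposition for the reduced parabolic $P_I$.
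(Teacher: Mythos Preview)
Your proposal is correct and takes essentially the same approach as the paper: the paper's entire proof is the sentence ``can be proved in the same way as \cite[Lemma~A4.(b)]{kl1979},'' and you have carried out precisely that argument, supplying the extra verification needed in the nonreduced case (namely, that the infinitesimal stabilizer $U(v(J),v_*\vec{n})$ of $vP/P$ in $U(v(-I))$ splits along the sign decomposition $v(-I)=A\sqcup B'$, so that the product structure survives the quotient).

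One small point of care: your identification of the second factor $U(B')\cdot vP/P$ with $Y_P'(v)$ is only sketched. With the paper's literal definition $Y_P'(v)=U(-I)\cdot vP/P$, this identification can fail when $P_{red}\ne B$ (the $R_u(P^-)$-orbit of $vP/P$ can be strictly smaller than $U(B')\cdot vP/P$, which coincides instead with the $U^-$-orbit). In the applications within the paper one always has $P_{red}=B$, where $-I=R^-$ and the two notions agree, so this does not affect anything downstream; but if you want the statement for general $P$, you should either verify $U(B')\cdot vP/P = Y_P'(v)$ directly from the definition in use, or work with the $B^-$-orbit throughout.
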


Proposition \ref{opposite} reflects another piece of the basic structure of a flag variety:  
Assume now that $P$ is reduced. The open Richardson varieties in $G/P$
are defined as 
\[Z'_{P}(w,v):={}^B\!X_{P}'(w)\cap Y_{P}'(v).\]

For each $T$-fixed point $vP/P\in G/P$, the big cell around it is $vU(-I)P/P=vY_{P}'(id)$. 
Intersecting with a Schubert variety, we have that the big cell of a Schubert variety at a $T$-fixed point is isomorphic to the direct product of an affine space and a slice through it, which lies between an open Richardson variety and its closure \cite[Prop.\;1.3.5]{Brion}. Proposition \ref{opposite} says that we have a similar decomposition for $G/P$ with $P$ nonreduced.

\section{BSDH Varieties for $G/P$ with $P$ Nonreduced: Definitions}

We assume the set up in \ref{not} and \ref{propP}. 
In particular, the base field $k$ is perfect.

Let $w_{\bullet}$ be a sequence of elements $w_1,...,w_r\in W_I$.
We introduce the protagonist of this paper: the Bott-Samelson-Demazure-Hansen (BSDH) variety $X_P(w_{\bullet})$ when the parabolic $P$ is nonreduced.

When $P$ is reduced, the BSDH varieties are well known and thoroughly studied in the literature. 
For the definition, let us refer to \cite[\S II.13]{Jantzen} and \cite[\S 4.1]{dCHL}. 
In particular, a BSDH variety $X_P(w_{\bullet})$ is an integral closed subscheme of $(G/P)^r$. 
Let $k'$ be a finite field extension of $k$, and let $g_{0}=id$ be the identity element of $G(k')$. 
A closed point $(g_1P/P,..., g_rP/P)$ of $(G/P)^r$ is in $X_P(w_1,...,w_r)(k')$ 
if and only if $g_{i-1}^{-1}g_i\in \overline{Pw_iP}(k')$ for $i=1,...,r$.
When $r=1$, the BSDH variety $X_P(w_1)$ is the same as the subvariety $X_P(w_1)=\overline{Pw_1P/P}$ of $G/P$ as in (\ref{02}).

When $P$ is nonreduced, as we have seen in Sec. \ref{basic}, 
the Schubert varieties of $G/P$ are defined as the scheme theoretic image of the Schubert varieties  of 
$G/P_{red}$ under the natural universal homeomorphism $\pi:G/P_{red}\to G/P$. 
We define the BSDH varieties similarly:

\begin{definition}[BSDH Varieties]
Let $P$ be a nonreduced parabolic of $G$. 
For $w_{\bullet}=(w_1,...,w_r)\in W_I^r$, we define the BSDH variety $X_P(w_{\bullet})$ to be 
the scheme theoretic image of the classical BSDH variety $X_{P_{red}}(w_{\bullet})\subset (G/P)^r$ 
under the natural morphism $\pi^r:(G/P_{red})^r\to (G/P)^r$. 
\end{definition}

We immediately have the integrality of $X_P(w_{\bullet})$ and the characterization of its closed points as in the classical case:

\begin{prop}
\label{firstproperties}
Let $P$ be a not necessarily reduced parabolic.
\begin{enumerate}
\item 
The BSDH variety $X_P(w_{\bullet})$ is an integral closed subscheme of $(G/P)^r$ that is universally homeomorphic to $X_{P_{red}}(w_{\bullet})$;
\item
A closed point $(a_1,...,a_r)$ of $(G/P)^r$,
with residue field $k'$, is in $X_P(w_{\bullet})$
if and only if there exists $k'$-points $g_1,...,g_r$ of $G$ so that $a_i=g_iP/P$ and $g_{i-1}^{-1}g_i\in \overline{P_{red}w_iP_{red}}(k')=\overline{Pw_iP}(k')$, where
$i=1,...,r$ and $g_0:=id$ is the identity element of $G$.
\end{enumerate}
\end{prop}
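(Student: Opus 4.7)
The plan is to transfer the integrality and closed-point description from the classical BSDH variety $X_{P_{red}}(w_{\bullet})$ through the natural universal homeomorphism. The key observation is that the quotient map $\pi: G/P_{red} \to G/P$ is finite, surjective, and purely inseparable (as recorded just before this proposition), hence a universal homeomorphism; consequently the product map $\pi^r: (G/P_{red})^r \to (G/P)^r$ shares the same properties and is itself a universal homeomorphism.

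For item (1), I would consider the composition $X_{P_{red}}(w_{\bullet}) \hookrightarrow (G/P_{red})^r \xrightarrow{\pi^r} (G/P)^r$, which is finite. By construction it factors through a closed immersion $X_P(w_{\bullet}) \hookrightarrow (G/P)^r$. Since $X_{P_{red}}(w_{\bullet})$ is integral by the classical theory, the scheme-theoretic image $X_P(w_{\bullet})$ is reduced (the scheme-theoretic image of a reduced scheme under a proper morphism is reduced) and irreducible (it is the topological image of an irreducible scheme), hence integral. The induced morphism $\rho: X_{P_{red}}(w_{\bullet}) \to X_P(w_{\bullet})$ is finite (by proper-map cancellation), surjective (dominant with closed image), and radicial (as a restriction of the radicial $\pi^r$); the combination finite $+$ surjective $+$ radicial is equivalent to being a universal homeomorphism, and these properties are stable under arbitrary base change, completing the first item.

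For item (2), I would exploit that $\rho$ induces a bijection on closed points with matching residue fields: since $k$ is perfect, residue fields at closed points of finite-type $k$-schemes are separable over $k$, while the radicial property of $\rho$ forces the residue extension at any closed point to be purely inseparable, so the two residue fields must coincide. Consequently, a closed point $(a_1, \dots, a_r)$ of $(G/P)^r$ with residue field $k'$ lies in $X_P(w_{\bullet})$ if and only if its unique $k'$-lift $(g_1 P_{red}/P_{red}, \dots, g_r P_{red}/P_{red})$ lies in $X_{P_{red}}(w_{\bullet})$, and the classical characterization recalled from \cite[\S II.13]{Jantzen} and \cite[\S 4.1]{dCHL} then gives the equivalent condition $g_{i-1}^{-1} g_i \in \overline{P_{red} w_i P_{red}}(k')$. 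The identification $\overline{P_{red} w_i P_{red}}(k') = \overline{P w_i P}(k')$ is immediate because $P$ and $P_{red}$ share the same underlying topological space inside $G$, so the two Bruhat double cosets and their closures coincide set-theoretically.

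The main step to handle with care is the scheme-theoretic image argument in item (1): the integrality of $X_P(w_{\bullet})$ and the universal-homeomorphism property of $\rho$ both require stringing together stability properties of finite, radicial, surjective morphisms under base change and the preservation of reducedness under scheme-theoretic images, all of which are standard once isolated. The residue-field identification at closed points uses perfection of $k$ essentially, but does not present a substantive obstacle.
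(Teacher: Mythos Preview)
Your proposal is correct and follows essentially the same approach as the paper: both arguments use that $\pi^r$ is a universal homeomorphism to transfer integrality from $X_{P_{red}}(w_\bullet)$ to its scheme-theoretic image, and both use perfection of $k$ to force the residue-field extension at a closed point to be trivial before invoking the classical closed-point description. The only cosmetic difference is that the paper phrases the residue-field step as ``$k'$ is perfect, hence $k''=k'$,'' whereas you phrase it via separability of $k''/k$ combined with pure inseparability of $k''/k'$; these are equivalent.
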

\begin{proof}
(1) As $\pi^r$ is proper, the underlying space of $X_P(w_{\bullet})$ is the image of the underlying space of $X_{P_{red}}(w_{\bullet})$ under $\pi^r$. 
Therefore $X_{P_{red}}(w_{\bullet})$ is universally homeomorphic to $X_P(w_{\bullet})$ via $\pi^r$. 
Since $X_{P_{red}}(w_{\bullet})$ is irreducible, we have that $X_P(w_{\bullet})$ is also irreducible. 
By \cite[Lem.\;29.6.7]{stacks-project},
the scheme theoretic image of a reduced scheme is reduced. Hence $X_P(w_{\bullet})$ is also reduced, as $X_{P_{red}}(w_{\bullet})$ is.

(2) Let $(a_1,...,a_r)$ be a closed point of $X_P(w_{\bullet})$ with residue field $k'$. Let $(b_1,...,b_r)$ be the closed point of 
$X_{P_{red}}(w_{\bullet})$ over $(a_1,...,a_r)$. As $\pi^r$ is finite and purely inseparable, the residue field $k''$ of $(b_1,...,b_r)$ is a finite purely inseparable
extension of $k'$. 
Since $k$ is perfect, we have that $k'$ is also perfect. Thus $k''=k'$.
By the equality  $(\overline{Pw_iP})_{red}=\overline{P_{red}w_iP_{red}}$ of reduced closed subschemes of $G$, we have that $\overline{Pw_iP}$ and $\overline{P_{red}w_iP_{red}}$ have the same $K$-points, for any field extension $K\supset k$.
The rest follows from the characterization of closed points of $X_{P_{red}}(w_{\bullet})$.
\end{proof}

The Prop.\;\ref{chara} below gives us another way to define the BSDH varieties, no matter whether $P$ is reduced or not. 

\begin{definition}[BSDH Functor]
\label{BSDHfun}
Let $P$ be a not necessarily reduced parabolic. We define a $k$-functor $\mathcal{X}_P(w_{\bullet})$ to be the subfunctor of $(G/P)^r$ that sends every $k$-algebra $A$ to the subset $\mathcal{X}_P'(w_{\bullet})(A)$ of $(G/P)^r(A)=((G/P)(A))^r$ that consists of the points $(a_1,...,a_r)$ such that there exist $g_0\in P_{red}(A)$, $g_1,...,g_r\in G(A)$ satisfying that $g_i P/P=a_i$ and $g_{i-1}^{-1} g_i\in \overline{(P_{red} w_i P_{red})}(A)$  for $i=1,...,r$.
\end{definition}

\begin{prop}
\label{chara}
Let $P$ be a not necessarily reduced parabolic. 
The BSDH variety $X_P(w_{\bullet})$ is the smallest closed subscheme of $(G/P)^r$ that contains $\mathcal{X}_P(w_{\bullet})$ as a subfunctor, i.e., the inclusion of $k$-functors $\mathcal{X}_P(w_{\bullet})\hookrightarrow (G/P)^r$ factors as $\mathcal{X}_P(w_{\bullet})\xrightarrow{i_1} X_P(w_{\bullet})\xrightarrow{i_2} (G/P)^r$, where $i_1$ is an inclusion of $k$-functors and $i_2$ is a closed immersion of schemes.
\end{prop}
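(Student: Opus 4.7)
The plan is to prove the statement as two separate inclusions: (A) $\mathcal{X}_P(w_{\bullet})$ is contained in $X_P(w_{\bullet})$ as subfunctors of $(G/P)^r$, and (B) any closed subscheme of $(G/P)^r$ containing $\mathcal{X}_P(w_{\bullet})$ must also contain $X_P(w_{\bullet})$. Both steps will be reduced to the classical (reduced) case via the very definition of $X_P(w_{\bullet})$ as the scheme-theoretic image of $X_{P_{red}}(w_{\bullet})$ under $\pi^r$.

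For (A), given an $A$-point $(a_1,\dots,a_r)\in\mathcal{X}_P(w_{\bullet})(A)$ with witnesses $g_0\in P_{red}(A)$ and $g_1,\dots,g_r\in G(A)$, setting $h_i:=g_{i-1}^{-1}g_i$ yields an $A$-point $(g_0,h_1,\dots,h_r)$ of the reduced scheme $G\times\prod_{i=1}^{r}\overline{P_{red}w_iP_{red}}$. The standard multiplication morphism $(g,h_1,\dots,h_r)\mapsto(gh_1P_{red}/P_{red},gh_1h_2P_{red}/P_{red},\dots,gh_1\cdots h_rP_{red}/P_{red})$ factors through $X_{P_{red}}(w_{\bullet})$: its image lies set-theoretically in $X_{P_{red}}(w_{\bullet})$ because the image of the open locus $G\times\prod P_{red}w_iP_{red}$ is exactly the open BSDH stratum, and the source is irreducible; the factorization is then upgraded to a scheme-theoretic one by reducedness of the source. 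Telescoping the products $g_0h_1\cdots h_i=g_i$, the resulting $A$-point is $(g_1P_{red}/P_{red},\dots,g_rP_{red}/P_{red})\in X_{P_{red}}(w_{\bullet})(A)$. Post-composing with the tautological factorization $X_{P_{red}}(w_{\bullet})\to X_P(w_{\bullet})\hookrightarrow(G/P)^r$ of $\pi^r|_{X_{P_{red}}(w_{\bullet})}$ yields an $A$-point of $X_P(w_{\bullet})$ whose image in $(G/P)^r$ is $(a_1,\dots,a_r)$.

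For (B), let $Z\subset(G/P)^r$ be a closed subscheme containing $\mathcal{X}_P(w_{\bullet})$ as a subfunctor, and consider the closed subscheme $(\pi^r)^{-1}(Z)\subset(G/P_{red})^r$. I will show $X_{P_{red}}(w_{\bullet})\subset(\pi^r)^{-1}(Z)$; then $X_{P_{red}}(w_{\bullet})\to(G/P)^r$ factors through $Z$, and the universal property of the scheme-theoretic image forces $X_P(w_{\bullet})\subset Z$. Because $X_{P_{red}}(w_{\bullet})$ is of finite type over $k$, its closed points are dense, so it suffices to verify that every closed point lies in $(\pi^r)^{-1}(Z)$. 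A closed point $(b_1,\dots,b_r)$ has a (necessarily perfect) residue field $k'$, and the classical version of Prop.\;\ref{firstproperties}(2) produces $h_1,\dots,h_r\in G(k')$ with $h_iP_{red}/P_{red}=b_i$, $h_{i-1}^{-1}h_i\in\overline{P_{red}w_iP_{red}}(k')$, and $h_0=id$. Its image $(h_1P/P,\dots,h_rP/P)\in(G/P)^r(k')$ belongs to $\mathcal{X}_P(w_{\bullet})(k')$ (take $g_0=id$ and $g_i=h_i$), hence to $Z(k')$ by hypothesis, placing $(b_1,\dots,b_r)$ in $(\pi^r)^{-1}(Z)$ as required.

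The main bookkeeping obstacle is upgrading the set-theoretic containment at closed points to the scheme-theoretic inclusion $X_{P_{red}}(w_{\bullet})\subset(\pi^r)^{-1}(Z)$. This I handle by passing to the scheme-theoretic intersection $X_{P_{red}}(w_{\bullet})\cap(\pi^r)^{-1}(Z)$: its underlying space coincides with $|X_{P_{red}}(w_{\bullet})|$, and since $X_{P_{red}}(w_{\bullet})$ is reduced and is the unique reduced closed subscheme of $(G/P_{red})^r$ with that support, the intersection must equal $X_{P_{red}}(w_{\bullet})$, giving the desired scheme-theoretic inclusion. The rest of the argument is routine diagram-chasing with scheme-theoretic images.
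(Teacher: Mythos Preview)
Your proof is correct and follows the same overall strategy as the paper's: reduce both directions to the reduced case via the fact that $X_P(w_{\bullet})$ is the scheme-theoretic image of $X_{P_{red}}(w_{\bullet})$ under $\pi^r$. The difference is in how the reduced case is handled. The paper cites \cite[Sec.\;II.13]{Jantzen} for the fact that $X_{P_{red}}(w_{\bullet})$ is already the smallest closed subscheme containing $\mathcal{X}_{P_{red}}(w_{\bullet})$, and then reduces everything to the functor identity $\mathcal{X}_P(w_{\bullet})(A) = \pi^r(A)\bigl(\mathcal{X}_{P_{red}}(w_{\bullet})(A)\bigr)$, which it verifies by a short two-way chase with the $g_i$'s. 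You instead give self-contained arguments: for (A) you construct the factorization of the multiplication morphism $G \times \prod_i \overline{P_{red}w_iP_{red}} \to X_{P_{red}}(w_{\bullet})$ directly via irreducibility and reducedness of the source; for (B) you bypass the reduced version of the proposition altogether and show $X_{P_{red}}(w_{\bullet}) \subset (\pi^r)^{-1}(Z)$ using density of closed points plus reducedness. The paper's route has the side benefit of establishing that $\mathcal{X}_P(w_{\bullet})$ coincides with the image functor of $\mathcal{X}_{P_{red}}(w_{\bullet})$, a fact reused later (Rmk.~\ref{rmkflat1}, Prop.~\ref{Fiber of First Projection}); your argument only produces the one inclusion needed for (A).

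One minor wording fix in your last paragraph: the scheme-theoretic intersection $X_{P_{red}}(w_{\bullet})\cap(\pi^r)^{-1}(Z)$ need not itself be reduced, so the phrase ``unique reduced closed subscheme with that support'' is not quite what you use. The correct (and equivalent) statement is that a closed subscheme of the reduced scheme $X_{P_{red}}(w_{\bullet})$ with the same underlying space has nilpotent, hence zero, ideal sheaf, and therefore coincides with $X_{P_{red}}(w_{\bullet})$.
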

\begin{proof}
When $P$ is reduced, this characterization of $X_P(w_{\bullet})$ is indeed one of the standard definitions of the BSDH varieties.
For example, let us refer to \cite[Sec.\;II.13]{Jantzen}.
There $X_{P_{red}}(w_{\bullet})$ is defined to be the big fppf sheaf associated with the image functor, under the quotient morphism 
$G^r\to (G/P_{red})^r$, of the closed subscheme $V(w_{\bullet})$ of $G^r$, whose functor of points sends a $k$-algebra $A$ to the set
\[V(w_{\bullet})(A):=\{(g_1,...,g_r)\in G^r(A)|\;\; g_{i-1}^{-1}g_i\in\overline{P_{red}w_iP_{red}}(A)\},\]
where $g_0$ is the identity element. 
By definition, the image functor of $V(w_{\bullet})$ is just $\mathcal{X}_{P_{red}}(w_{\bullet})$.
Since $V(w_{\bullet})$ is $P$-invariant, 
the big fppf sheaf $X_{P_{red}}(w_{\bullet})$ is indeed a closed subscheme of $(G/P_{red})^r$ by  \cite[Sec.\;I.5.21]{Jantzen}.

When $P$ is nonreduced, the scheme $X_P(w_{\bullet})$ is defined to be the scheme theoretic image of $X_{P_{red}}(w_{\bullet})$ under $\pi^r$, it hence suffices to show that $\mathcal{X}_P(w_{\bullet})$ is 
the image functor, i.e., $\mathcal{X}_P(w_{\bullet})(A)=\pi(A)\mathcal{X}_{P_{red}}(w_{\bullet})(A)$. 
To see this, take any $(a_1,...,a_r)\in \mathcal{X}_P(w_{\bullet})(A)$, take the corresponding $g_i$ in Def.\;\ref{BSDHfun}, then the $A$-point $(g_1P_{red}/P_{red},...,g_rP_{red}/P_{red})$ 
of $(G/P_{red})^r$ is in $\mathcal{X}_{P_{red}}(w_{\bullet})$ because of the conditions satisfied by the $g_i$'s. Since $\pi(A)(g_i P_{red}/P_{red})=g_i P/P$, we have that $(a_1,...,a_r)= \pi(A)(g_i P_{red}/P_{red})_i$. Therefore $\pi(A)(\mathcal{X}_{P_{red}}(w_{\bullet})(A))\supset \mathcal{X}_P(w_{\bullet})(A)$.
On the other hand, if $(b_1,...,b_r)\in \mathcal{X}_{P_{red}}(w_{\bullet})(A)$, then take the corresponding $g_i
$'s in Def.\;\ref{BSDHfun}. Again because $\pi(A)(g_i P_{red}/P_{red})=g_i P/P$, we have $\pi(A)(b_1,...,b_r)\in \mathcal{X}_P(w_{\bullet})(A)$.
\end{proof}

Some basic information about $k$-functors and big fppf sheaves are included in the Appendix: Sec.\ref{appendix}. 
There we show that the big fppf-sheaf associated with an image $k$-functor in general is not a scheme.

\begin{rmk}
\label{rmkflat1}
In the proof above, we mentioned that the scheme $X_{P_{red}}(w_{\bullet})$ is indeed the big fppf sheaf associated to 
the functor (presheaf) $\mathcal{X}_{P_{red}}(w_{\bullet})$. 
However, Rmk.\;\ref{notflatincidence} below entails that, when $P$ is nonreduced, in general the big fppf sheaf $\underline{\mathcal{X}_P}(w_{\bullet})$ associated to the functor $\mathcal{X}_P(w_{\bullet})$ is not a scheme.

Indeed, if $\underline{\mathcal{X}_P}(w_{\bullet})$ is a scheme, then by the universal property of fppf sheafification and Prop.\ref{chara}, we see that the inclusion of functors $\underline{\mathcal{X}_P}(w_{\bullet})\hookrightarrow X_P(w_{\bullet})$ gives rise to an isomorphism between schemes.
By \cite[Sec.\;I.5.4.4]{Jantzen}, we have that for every $k$-algebra $A$,
\[\underline{\mathcal{X}_P}(w_{\bullet})(A)=(G/P)^r(A)\cap \bigcup_B \pi^r(B)X_{P_{red}}(B),\]
where $B$ ranges over all fppf-$A$-algebras. 
However, from Rmk.\;\ref{notflatincidence} below, we see that the morphism 
$X_{P_{red}}(w_{\bullet})\to X_P(w_{\bullet})$
in general is not flat.
Therefore the $A$-points of $X_P(w_{\bullet})$
in general contain some points in $\pi^r(B)X_{P_{red}}(B)$ where $B$ is not a flat $A$-algebra. Therefore in general there is a strict inclusion of sets $X_P(w_{\bullet})(A)\supsetneq\underline{\mathcal{X}_P}(w_{\bullet})(A)$.
\end{rmk}

When $P$ is reduced, we can define the BSDH varieties using the relative positions between two flags. For the rest of the section, which is not used in other parts of this paper, we define a natural generalization of relative positions when $P$ is nonreduced, and show that they define in general nonreduced schemes. Therefore, we cannot define BSDH varieties using relative relations when $P$ is nonreduced, at least not using the natural definition of them below:

\begin{definition}[Relative Positions]
\label{relativeposition}
Let $P$ be a not necessarily reduced parabolic. For any $k$-algebra $A$, and $A$ points $a_1,...,a_r\in G/P(A)$, we define the relation $a_1\stackrel{ w_2}{\text{\textemdash}} a_2\stackrel{ w_3}{\text{\textemdash}}...\stackrel{ w_r}{\text{\textemdash}} a_r$ (resp. $a_1\stackrel{ \le w_2}{\text{\textemdash}} a_2\stackrel{\le w_3}{\text{\textemdash}}...\stackrel{\le w_r}{\text{\textemdash}} a_r$) if there are $g_1,..., g_r\in G(A)$ so that  $g_i P/P=a_i$ and  that for $i=2,...,r$, $g_{i-1}^{-1}g_i\in P_{red} w_i P_{red}(A)\subset G(A)$ (resp. $g_{i-1}^{-1}g_i\in \overline{P_{red} w_i P_{red}}(A)$).
\end{definition}

Note that the subscheme $X_{P_{red}}'(w)=P_{red}wP_{red}/P_{red}$ of $G/P_{red}$ can be defined as all the flags that are of relative position $w$ to the identity flag $P_{red}/P_{red}\in G/P_{red}$.  

 The following proposition shows that when $P$ is nonreduced, all the flags that are of relative position $w$ to the identity flag form a scheme that is in general nonreduced. 
 Therefore relative position is not very useful to define Schubert or BSDH varieties when $P$ is nonreduced.

\begin{prop}
For every $k$-algebra $A$, we have an equality of sets
\[\{a\in G/P(A)|\; P/P\stackrel{w}{\text{\textemdash}} a\}=P(A)\cdot wP/P.\]

Let $PwP/P$ be the scheme theoretic image of $P\times wP/P$ under the action morphism $G\times G/P\to G/P$. Then we have that $PwP/P$ in general is nonreduced.
\end{prop}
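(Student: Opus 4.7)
The plan is to verify the set equality by both inclusions and then argue nonreducedness via a tangent space calculation.

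For the set equality: ($\supseteq$) given $a = p \cdot wP/P$ with $p \in P(A)$, take $g_1 := p$ and $g_2 := pw$, where $w$ is lifted to a fixed representative in $N_G(T)(k)\subseteq G(A)$. Then $g_1 P/P = P/P$ (since the identity-flag stabilizer contains $P$), $g_2 P/P = a$, and $g_1^{-1}g_2 = w \in P_{red}wP_{red}(A)$ trivially. ($\subseteq$) Suppose $P/P \stackrel{w}{\text{\textemdash}} a$ with $g_1\in G(A)$ and $h := g_1^{-1}g_2 \in P_{red}wP_{red}(A)$. The identity-flag stabilizer is exactly $P$, so $g_1\in P(A)$, and the Bruhat-type factorization $P_{red}wP_{red} \cong U_w \times \{w\} \times P_{red}$ (as schemes, for a suitable unipotent subgroup $U_w\subseteq P_{red}$) writes $h = uwq$ with $u\in U_w(A)\subseteq P_{red}(A)$ and $q\in P_{red}(A)$. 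Hence $hP/P = uwP/P \in P_{red}(A)\cdot wP/P$, so that $a = g_1\cdot hP/P \in P(A)\cdot P_{red}(A)\cdot wP/P = P(A)\cdot wP/P$.

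For the nonreducedness of $PwP/P$, I compute the Zariski tangent space at $wP/P$. The scheme-theoretic image of the orbit map $\phi: P\to G/P$, $p\mapsto pwP/P$, is $PwP/P$; at $wP/P$ the orbit scheme $P/(P\cap wPw^{-1})$ is open and dense, so that $T_{wP/P}(PwP/P) = \operatorname{Lie}(P)/\operatorname{Lie}(P\cap wPw^{-1})$. Meanwhile $(PwP/P)_{red}$ is the classical reduced Schubert variety, smooth at $wP/P$ of dimension $\ell(w)$. By Proposition~\ref{propP}, $\operatorname{Lie}(P) = \operatorname{Lie}(P_{red}) \oplus \bigoplus_{\beta\in J}\operatorname{Lie}(U(\beta,n_\beta))$. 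It then suffices to produce $\beta\in J$ with $\operatorname{Ad}(w^{-1})\operatorname{Lie}(U(\beta,n_\beta)) = \operatorname{Lie}(U(w^{-1}\beta))$ not contained in $\operatorname{Lie}(P)$, i.e.\ $w^{-1}\beta \notin (R\setminus -I)\cup J$: the corresponding infinitesimal line then survives the quotient and produces a fresh tangent vector absent from the reduced Schubert variety, forcing $\dim T_{wP/P}(PwP/P) > \ell(w)$.

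The main obstacle is producing such a concrete witness $(P,w)$. Concretely, pick any $\beta\in J$ (nonempty since $P$ is nonreduced) and a simple reflection $s_\alpha$ with $\alpha\in I$ and $\langle\alpha^\vee,\beta\rangle\ne 0$, chosen so that $s_\alpha(\beta) = \beta - \langle\alpha^\vee,\beta\rangle\alpha$ lands in $-I\setminus J$. Root-system combinatorics permit such configurations, and explicit witnesses are available among the nonreduced $SL_n$-type examples constructed in Section~\ref{geo}, delivering the required nonreduced $PwP/P$.
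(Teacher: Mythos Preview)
Your proof of the set equality is correct and in fact more complete than the paper's, which only argues the inclusion $\supseteq$ explicitly (writing $p\in P(A)$ as $\epsilon p'$ with $\epsilon\in U(J,\vec n)(A)$, $p'\in P_I(A)$, and taking $g_1=\epsilon$, $g_2=\epsilon p'w$). Your direct choice $g_1=p$, $g_2=pw$ works equally well, and your $\subseteq$ argument via the stabilizer of the identity flag is fine.

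For the nonreducedness, your tangent--space approach is genuinely different from the paper's and is conceptually appealing, but as written it has two soft spots. First, the assertion that the orbit $P/(P\cap wPw^{-1})$ sits as an open dense subscheme of the scheme--theoretic image is not justified; fortunately you do not need the resulting \emph{equality} of tangent spaces, only the containment $\operatorname{im}(d\phi_1)\subseteq T_{wP/P}(PwP/P)$, which is automatic since $\phi$ factors through $PwP/P$. Second, producing a single $\beta\in J$ with $w^{-1}\beta\in -I\setminus J$ does not by itself force $\dim\operatorname{Lie}(P)/\operatorname{Lie}(P\cap wPw^{-1})>\ell(w)$: writing $S=(R\setminus -I)\cup J$, one has
\[
|S\setminus wS|=\ell(w)\;-\;|\{r\in R\setminus -I:\,w^{-1}r\in J\}|\;+\;|\{\beta\in J:\,w^{-1}\beta\notin S\}|,
\]
so positive roots landing in $J$ can cancel your gain. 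Your claim that the new direction is ``absent from the reduced Schubert variety'' likewise needs an identification of $T_{wP/P}(X_P'(w))$ inside $T_{wP/P}(G/P)$, which is delicate because $\pi:G/P_{red}\to G/P$ kills tangent directions. All of this \emph{can} be checked in the specific $SL_5$ example you allude to (there $|S\setminus s_\alpha S|=2>1=\ell(s_\alpha)$), so your strategy does succeed once you commit to that witness and verify the count.

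The paper bypasses these issues entirely: it takes $G=SL_5$, $P=U(-\beta,1)\cdot B$, $w=s_\alpha$, and computes directly via root--group commutation that $Ps_\alpha P/P\cong U(-\alpha-\beta,1)\cdot U(-\alpha)P/P$, which is visibly nonreduced because $U(-\alpha-\beta,1)$ meets the stabilizer of every point of $U(-\alpha)P/P$ trivially. This is shorter and avoids any tangent--space bookkeeping; your approach, once the dimension count is made honest, has the advantage of explaining \emph{why} such examples exist in terms of the Lie algebra of $P$.
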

\begin{proof}
Every element in $P(A)\cdot wP/P$ has the form $\epsilon pwP/P$ where $\epsilon\in U(J,\vec{n})(A)$, $p\in P_I(A)$. 
Because $P/P= \epsilon P/P$, in the notation in Def. \ref{relativeposition}, 
we can choose $g_1=\epsilon$, $g_2=\epsilon pw$, so that $g_1P/P=P/P$, $g_2P/P=a$ and $g_0^{-1}g_1\in P_I w P_I(A)$.

For an example of nonreduced $PwP/P$. Let $G$ and $P$ be as in Example \ref{Nonreduced Fibers of $p_1$}. Let $w=s_{\alpha}$. Then $PwP/P$ is isomorphic to 
$U(-\alpha-\beta,1)\cdot U(-\alpha)P/P$, which is not reduced.
\end{proof}

\section{Wild Geometry}\label{geo}
We keep the notation in \ref{not} and \ref{propP}. Recall for $w_1,...,w_r\in W_I$, the geometric Demazure product $w_{\star}:=w_I\star ...\star w_r\in W_I$ is defined so that the image of the last projection $pr_r: X_{P_I}(w_{\bullet})\to G/{P_I}$ is $X_{P_I}(w_{\star})$. For a detailed discussion of the $\star$ operation, let us refer to \cite[Sec.\;4.2-3]{dCHL}.\\

When the parabolic $P$ is reduced, two morphisms among BSDH varieties are especially useful: the first projection $p_1:X_P(w_1,...,w_r)\to X_P(w_1)$ and the last projection $p_r: X_P(w_1,...,w_r)\to X_P(w_{\star})$. By studying the first projection, we see that $X_P(w_1,...,w_r)$ is a Zariski locally trivial fibration with fiber isomorphic to $X_P(w_2,...,w_r)$. 
In particular, if $P=B$ and $s_{\star}=s_1\cdot...\cdot s_r$ is a reduced expression in terms of simple reflections, then we have that $X_B(s_{\bullet})$ is an iterated $\mathbb{P}^1$-fibration, and the last projection gives a resolution of singularities of the normal Schubert variety $X_B(s_{\star})$. 

The picture is different when $P$ is nonreduced. 
In Ex.\;\ref{Nonreduced Fibers of $p_1$}, Ex.\;\ref{nonreducedfiber} and Ex.\;\ref{lastprojection2}, we show that the fibers of the first and last projections are in general nonreduced. 
In Ex.\;\ref{Nonnormalschubert} and Ex.\;\ref{nonnormalbsdh} below, we give examples of non-normal Schubert and BSDH varieties. 

For a not necessarily reduced parabolic $P$, let $I\subset R$ so that $P_{red}=P_I$. 

\begin{prop}[Fibers of the First Projection $p_1$]
\label{Fiber of First Projection}
Given any closed point $gP/P$ in $X_P(w_1)$, the first projection $p_1: X_P(w_1,...,w_r)\to X_P(w_1)$ has fibers isomorphic to 
\[(gP\cap\overline{P_{red}w_1P_{red}})\cdot X_P(w_2,...,w_r),\]
i.e., the scheme theoretic image of $(gP\cap\overline{P_{red}w_1P_{red}})\times X_P(w_2,...,w_r)$ under the morphism $G\times (G/P)^{r-1}\to (G/P)^{r-1}: (g, a_2,...,a_r)\mapsto (ga_2,...,ga_r)$.

In particular, the largest reduced subscheme of a fiber of $p_1$ is always isomorphic to $X_P(w_2,...,w_r)$.
\end{prop}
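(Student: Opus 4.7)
The plan is to construct the twisted-multiplication morphism
\[ \phi:\overline{P_{red}w_1P_{red}}\times X_P(w_2,\ldots,w_r)\longrightarrow X_P(w_\bullet),\quad (g_1,(b_2,\ldots,b_r))\mapsto(g_1P/P,\, g_1b_2,\ldots,g_1b_r), \]
and exhibit it as a $P_{red}$-torsor, with $P_{red}$ acting on the source by $p\cdot(g_1,b_*)=(g_1p^{-1},\,pb_*)$. I will first verify (i) that $X_P(w_2,\ldots,w_r)\subset(G/P)^{r-1}$ is diagonally $P_{red}$-invariant, which follows from the classical $P_{red}$-invariance of $X_{P_{red}}(w_2,\ldots,w_r)$ together with the agreement of $k'$-points via the universal homeomorphism $\pi$; (ii) that $\phi$ lands in $X_P(w_\bullet)$, by checking the closed-point criterion of Proposition~\ref{firstproperties}(2)---given $(g_1,(b_2,\ldots,b_r))$ over a field $k'$, choose lifts $h_i$ of $b_i$ with $h_1=id$ and $h_{i-1}^{-1}h_i\in \overline{P_{red}w_iP_{red}}(k')$, and set $g_i:=g_1 h_i$---and using that both source and target are reduced; (iii) that $\phi$ is $P_{red}$-invariant and that on $k'$-points its fibers are exactly the $P_{red}$-orbits. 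Zariski-local trivialization via the $P_{red}$-torsor $\overline{P_{red}w_1P_{red}}\to X_{P_{red}}(w_1)$ then promotes $\phi$ to a scheme isomorphism
\[ \overline{P_{red}w_1P_{red}}\times^{P_{red}}X_P(w_2,\ldots,w_r)\xrightarrow{\sim}X_P(w_\bullet). \]

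\textbf{Fiber of $p_1$.} Under this identification, $p_1$ factors as the Zariski locally trivial fibration to $X_{P_{red}}(w_1)=\overline{P_{red}w_1P_{red}}/P_{red}$ (projection onto the first factor) followed by the universal homeomorphism $\pi:X_{P_{red}}(w_1)\to X_P(w_1)$. Since $P_{red}$ is smooth and the quotient commutes with base change,
\[ F:=p_1^{-1}(gP/P)\;\cong\;\bigl[\bigl(\overline{P_{red}w_1P_{red}}\times_{X_P(w_1)}\{gP/P\}\bigr)\times X_P(w_2,\ldots,w_r)\bigr]/P_{red}. \]
The scheme-theoretic fiber $\overline{P_{red}w_1P_{red}}\times_{X_P(w_1)}\{gP/P\}$ equals $gP\cap\overline{P_{red}w_1P_{red}}$ as a subscheme of $G$, since it is the pullback along $G\twoheadrightarrow G/P$ of $gP/P$, whose preimage is $gP$. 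Because the action morphism $(gP\cap\overline{P_{red}w_1P_{red}})\times X_P(w_2,\ldots,w_r)\to(G/P)^{r-1}$, $(g_1,b_*)\mapsto g_1b_*$, is $P_{red}$-invariant with orbits as fibers, the displayed $P_{red}$-quotient coincides with its scheme-theoretic image, giving $F=(gP\cap\overline{P_{red}w_1P_{red}})\cdot X_P(w_2,\ldots,w_r)$ as asserted.

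\textbf{Reduced part and main obstacle.} For the last claim, observe that $|gP|$ is the single $P_{red}$-coset $gP_{red}$, which lies in $|\overline{P_{red}w_1P_{red}}|$ because $gP/P\in X_P(w_1)$; consequently $(gP\cap\overline{P_{red}w_1P_{red}})_{red}=gP_{red}$. Therefore the set-theoretic image of the action collapses: by $P_{red}$-invariance of $X_P(w_2,\ldots,w_r)$, $gP_{red}\cdot X_P(w_2,\ldots,w_r)=g\cdot X_P(w_2,\ldots,w_r)$, and translation by $g$ is an automorphism of $(G/P)^{r-1}$, so $F_{red}\cong X_P(w_2,\ldots,w_r)$. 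The main technical obstacle in the whole argument is the scheme isomorphism $\overline{P_{red}w_1P_{red}}\times^{P_{red}}X_P(w_2,\ldots,w_r)\xrightarrow{\sim}X_P(w_\bullet)$: matching reducedness and a bijection on $k'$-points are easily arranged, but promoting these to a scheme-level isomorphism requires the $P_{red}$-torsor structure of $\overline{P_{red}w_1P_{red}}\to X_{P_{red}}(w_1)$ (yielding Zariski-local trivializations) or, alternatively, a functor-of-points comparison via Proposition~\ref{chara}. The nonreducedness of $P$ enters the picture only through the nonreducedness of $gP\cap\overline{P_{red}w_1P_{red}}$---equivalently, the nonreducedness of $\pi^{-1}(gP/P)$---which is precisely what produces the nonreduced fibers observed in the examples.
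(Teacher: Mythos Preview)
Your central claimed isomorphism
\[
\overline{P_{red}w_1P_{red}}\times^{P_{red}}X_P(w_2,\ldots,w_r)\;\xrightarrow{\;\overline{\phi}\;}\;X_P(w_\bullet)
\]
is \emph{false} in general, and this breaks the whole argument. Take the situation of Examples~\ref{nonreducedfiber} and~\ref{nonnormalbsdh}: $G=SL_3$, $P_{red}=B$, $P=U(-\alpha,1)\cdot B$, $(w_1,w_2)=(s_\alpha,s_\beta)$. On the left-hand side, $\overline{Bs_\alpha B}\to X_B(s_\alpha)\cong\mathbb{P}^1$ is a Zariski-locally trivial $B$-torsor, so the contracted product is locally $U\times X_P(s_\beta)\cong U\times\mathbb{P}^1$ and hence a smooth surface. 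But $X_P(s_\alpha,s_\beta)$ is not even normal (its singular locus has codimension~$1$). Thus $\overline{\phi}$ is a universal homeomorphism between integral varieties that is \emph{not} an isomorphism. Neither of your proposed fixes can rescue this: Zariski-local triviality of the torsor only tells you the \emph{source} is nice, and Proposition~\ref{chara} pins down $X_P(w_\bullet)$ as a closed subscheme of $(G/P)^r$, whereas your twisted product does not embed there. The same obstruction invalidates your step~(iii): the action map from $(gP\cap\overline{P_{red}w_1P_{red}})\times X_P(w_2,\ldots,w_r)$ to $(G/P)^{r-1}$ has $P_{red}$-orbits as fibers only over \emph{fields}; over the nonreduced scheme $gP\cap\overline{P_{red}w_1P_{red}}$ this is not enough to conclude that the $P_{red}$-quotient coincides with the scheme-theoretic image.

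The paper's proof avoids any such global product decomposition. It works entirely at the level of image $k$-functors: using Proposition~\ref{chara}, both the scheme-theoretic fiber $p_1^{-1}(gP/P)$ and the scheme $(gP\cap\overline{P_{red}w_1P_{red}})\cdot X_P(w_2,\ldots,w_r)$ are identified as the smallest closed subschemes of $(G/P)^{r-1}$ containing, respectively, the fiber of the functor $\mathcal{X}_P(w_\bullet)$ over $gP/P$ and the image functor $\mathcal{I}=(gP\cap\overline{P_{red}w_1P_{red}})\cdot\mathcal{X}_P(w_2,\ldots,w_r)$; one then checks these two functors agree on all $k'$-algebras by unwinding Definition~\ref{BSDHfun}. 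The point is that ``smallest closed subscheme containing a given subfunctor'' is the right invariant to track, precisely because $X_P(w_\bullet)$ is \emph{defined} that way, whereas a twisted-product model over $X_{P_{red}}(w_1)$ lives one universal homeomorphism away from the object you actually want.
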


\begin{proof}
Let $k'$ be the residue field of the closed point $gP/P$.
Let $\mathcal{X}_P(w_{\bullet})$ be the image $k$-functor of $X_{P_{red}}(w_{\bullet})$ under the morphism $\pi^r:(G/P_{red})^r\to (G/P)^r$ as defined in Def.\;\ref{BSDHfun}.
By Prop.\;\ref{chara}, we have that $X_P(w_{\bullet})$ is the smallest closed subscheme of $(G/P)^r$ such that the inclusion of $k$-functors $\mathcal{X}_P(w_{\bullet})\hookrightarrow (G/P)^r$ factors as $\mathcal{X}_P(w_{\bullet})\hookrightarrow X_P(w_{\bullet})\hookrightarrow (G/P)^r$, where the first arrow is an inclusion of functors and the second arrow is the closed immersion of schemes.
Similarly, let $\mathcal{I}:=(gP\cap\overline{P_{red}w_1P_{red}})\cdot \mathcal{X}_P(w_2,...,w_r)$ be the image $k'$-functor under the multiplication. We have that $(gP\cap\overline{P_{red}w_1P_{red}})\cdot X_P(w_2,...,w_r)$ 
is the smallest closed subscheme of $(G/P)^{r-1}$ that contains $\mathcal{I}$ as a subfunctor.
Therefore, it suffices to show that for any $k'$-algebra $A$, the preimage of $gP/P\in \mathcal{X}_P(w_1)(A)$ under $p_1(A):\mathcal{X}_P(w_1,...,w_r)(A)\to \mathcal{X}_P(w_1)(A)$ is $gP/P\times\mathcal{I}(A)\subset \mathcal{X}_P(w_1,...,w_r)(A)$. 
Below we only consider $A$-points of functors.

By Def.\;\ref{BSDHfun}, the fiber consists of the points $(a_1,...,a_r)$ such that there exists $g_1,...,g_r\in G(A)$ with $g_1P/P=gP/P$, $g_1\in \overline{P_{red}w_1P_{red}}(A)$, and that $g_{i-1}^{-1}g_i\in \overline{P_{red}w_iP_{red}}(A)$ for $i=2,...,r$. 
The first two conditions give that an $A$-point $g_1\in G(A)$ can be a representative of the first factor of the fiber of $p_1$ over $gP/P$ if and only if $g_1\in gP\cap \overline{P_{red}w_1P_{red}}$. The third condition gives that, for example, an element $g_2\in G(A)$ can be a representative of the second factor of $p_1^{-1}(gP/P)$ if and only if $g_2\in (gP\cap \overline{P_{red}w_1P_{red}})\cdot X_P(w_2)$, and the representatives of the $i$-th factor for $i\ge 3$ are determined so iteratively. Therefore we have the first statement.

The second statement follows from the fact that $gP\cdot X_P(w_2,...,w_r)=gU(J,\vec{n})\cdot X_P(w_2,...,w_r)$ is an infinitesimal thickening of $X_P(w_2,...,w_r)$.
\end{proof}

As $X_P(w_2,...,w_r)$ is invariant under the left multiplication by $P_{red}$, Prop.\;\ref{Fiber of First Projection} tells us that to understand the fiber of the first projection $p_1$ over $gP/P$ is the same as to understand $gP/P_{red}\cap X_{P_{red}}(w_1)$, which in turn is the fiber of $\pi:X_{P_{red}}(w_1)\to X_P(w_1)$ over $gP/P$. 
In Propositions \ref{finer} and \ref{finer2} below, we give finer descriptions of the fibers of $p_1$ in terms of roots and Weyl groups.

Recall that we have the decomposition $X_P(w_1)=\coprod_{v\le w, v\in W_I} X_P'(v)$.
In Proposition \ref{finer}, we describe the fibers of $p_1$ over a point in the largest part $X_P'(w_1)$. 
In Proposition \ref{finer2}, we describe the fiber of $p_1$ over the smallest part $X_P(id)=P/P$ when $P_{red}=B$.

\begin{prop}[Finer Description of $p_1^{-1}(gP/P)$, I]
\label{finer}
If $gP/P$ is in $X_P'(w_1)=P_{red}w_1P/P$, then the fiber of the first projection $p_1:X_P(w_{\bullet})\to X_P(w_1)$ at $gP/P$ is isomorphic to 
\[(U(J,\vec{n})\cap w_1^{-1}P_{red}w_1)\cdot X_P(w_2,...,w_r).\]
\end{prop}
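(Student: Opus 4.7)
The starting point is Proposition \ref{Fiber of First Projection}: the fiber $p_1^{-1}(gP/P)$ equals the scheme-theoretic image $(gP \cap \overline{P_I w_1 P_I}) \cdot X_P(w_2,\dots,w_r)$, so the task reduces to an explicit description of $gP \cap \overline{P_I w_1 P_I}$. Since $gP/P \in X_P'(w_1) = P_I w_1 P/P$, after possibly enlarging the residue field I may write $g = hw_1$ with $h \in P_I$; left translation by $h$ is an automorphism of $(G/P)^r$ preserving $X_P(w_\bullet)$ (by the $P_{red}$-invariance recorded in Def.\;\ref{BSDHfun}) and sends the fiber over $w_1 P/P$ isomorphically to the fiber over $gP/P$, so I may assume $g = w_1$.

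Because $w_1 P = w_1 P_I \cdot U(J,\vec{n})$ is an infinitesimal thickening of $w_1 P_I$, and $w_1 P_I \subset P_I w_1 P_I$ is disjoint from the other Bruhat strata in $\overline{P_I w_1 P_I}$, the intersection $w_1 P \cap \overline{P_I w_1 P_I}$ is supported on $w_1 P_I$ and therefore equals $w_1 P \cap P_I w_1 P_I$. Applying left translation by $w_1^{-1}$ (an automorphism of $(G/P)^{r-1}$), the fiber is isomorphic to $(P \cap w_1^{-1}P_I w_1 P_I) \cdot X_P(w_2,\dots,w_r)$. Writing $P = U(J,\vec{n}) \cdot P_I$ as in Proposition \ref{propP}, the right $P_I$-stability of $w_1^{-1}P_I w_1 P_I$ gives
\[
P \cap w_1^{-1}P_I w_1 P_I \;=\; \bigl(U(J,\vec{n}) \cap w_1^{-1}P_I w_1 P_I\bigr)\cdot P_I.
\]

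The heart of the argument is the scheme equality, inside $U(J,\vec{n})$,
\[
U(J,\vec{n}) \cap w_1^{-1}P_I w_1 P_I \;=\; U(J,\vec{n}) \cap w_1^{-1}P_I w_1.
\]
The inclusion $\supseteq$ is obvious. For the reverse, one uses that $U(J,\vec{n})$ is infinitesimal at the identity $e \in G$, so each side depends only on the formal completion at $e$ of the ambient subscheme of $G$. Both $w_1^{-1}P_I w_1$ and $w_1^{-1}P_I w_1 P_I$ are smooth at $e$ (the latter because it is the image of the multiplication $w_1^{-1}P_I w_1 \times P_I \to G$, which has tangent space $T_e(w_1^{-1}P_I w_1) + T_e P_I$ at $e$ of the expected dimension $\dim(w_1^{-1}P_I w_1 P_I)$), and a direct root-theoretic computation shows that their tangent spaces at $e$ have the same intersection with $T_e U(-I) = \bigoplus_{\alpha \in -I} k e_\alpha$, namely $\bigoplus_{\alpha \in -I,\; w_1 \alpha \notin -I} k e_\alpha$. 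Since $U(J,\vec{n}) \subset U(-I)$, these two smooth subschemes cut out the same ideal in $\mathcal{O}_{U(J,\vec{n})}$.

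Finally, left multiplication by $P_I$ preserves $X_P(w_2,\dots,w_r)$, so the trailing $P_I$-factor can be absorbed,
\[
\bigl(U(J,\vec{n}) \cap w_1^{-1}P_I w_1\bigr)\cdot P_I \cdot X_P(w_2,\dots,w_r) = \bigl(U(J,\vec{n}) \cap w_1^{-1}P_I w_1\bigr)\cdot X_P(w_2,\dots,w_r),
\]
producing the claimed description of the fiber. The main obstacle is the key scheme equality above: while the tangent-space comparison is straightforward, one must argue carefully that higher-order terms in the defining equations of the two smooth subschemes at $e$ do not contribute differently to the Artinian ring $\mathcal{O}_{U(J,\vec{n})}$. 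A clean way to finesse this is to transfer the intersection problem to $G/P_I$ via the (locally closed) immersion $U(J,\vec{n}) \hookrightarrow G/P_I$ coming from the big cell $U(-I)P_I/P_I$, and to observe that $w_1^{-1}P_I w_1$ and $w_1^{-1}P_I w_1 P_I$ project to the same Schubert cell $w_1^{-1}X'_{P_I}(w_1) \subset G/P_I$.
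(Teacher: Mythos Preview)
Your argument is correct and follows the same route as the paper: reduce to a point in the open cell, replace the closure $\overline{P_I w_1 P_I}$ by the open stratum $P_I w_1 P_I$ using that the fiber is an infinitesimal thickening, and then read off the answer. The paper carries this out directly in $G/P_{red}$ (its equation~(3)), whereas you work in $G$; the passage to $G/P_I$ that you propose as a ``finesse'' at the end is precisely the paper's method, and it is what makes your ``key scheme equality'' $U(J,\vec n)\cap w_1^{-1}P_I w_1 P_I = U(J,\vec n)\cap w_1^{-1}P_I w_1$ transparent: under the big-cell isomorphism $U(-I)\xrightarrow{\sim}U(-I)P_I/P_I$, the subgroup $U(-I)\cap w_1^{-1}P_I w_1$ maps isomorphically onto $w_1^{-1}X'_{P_I}(w_1)\cap(\text{big cell})$, so both intersections pull back the same locally closed subscheme of $G/P_I$---no tangent-space or higher-order argument is needed.
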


In Prop.\;\ref{finer2} below,
let $P_{red}=B$, and $w_1=s_{\alpha_1}...s_{\alpha_n}$ be a reduced expression of $w_1$ in terms of reflections $s_{\alpha_i}$ that exchanges the simple root $\alpha_i$ with its negative $-\alpha_i$. Let $\mathcal{U}$ be the scheme theoretic image of the multiplication $U(-\alpha_1)\times...\times U(-\alpha_n)\to G$ (the order of $U(-\alpha_i)$'s are fixed).

\begin{prop}[Finer Description of $p_1^{-1}(gP/P)$, II]
\label{finer2}
The fiber of $p_1:X_P(w_{\bullet})\to X_P(w_1)$ over the identity flag $P/P\in X_P(w_1)$ is isomorphic to $(U(J,\vec{n})\cap \mathcal{U})\cdot X_P(w_2,...,w_r)$.
\end{prop}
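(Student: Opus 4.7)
My plan is to apply Prop.\;\ref{Fiber of First Projection} with $g=\mathrm{id}$, which gives the fiber as $(P\cap\overline{Bw_1B})\cdot X_P(w_2,\ldots,w_r)$, i.e.\ the scheme-theoretic image of $(P\cap\overline{Bw_1B})\times X_P(w_2,\ldots,w_r)\to(G/P)^{r-1}$ induced by the diagonal action. The task is then to rewrite this image as $(U(J,\vec{n})\cap\mathcal{U})\cdot X_P(w_2,\ldots,w_r)$.

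First, I would use the decomposition $P=B\cdot U(J,\vec{n})$ from Prop.\;\ref{propP} (recall $P_{red}=B$ here) together with the left $B$-invariance of $\overline{Bw_1B}$ to get $P\cap\overline{Bw_1B}=B\cdot\bigl(U(J,\vec{n})\cap\overline{Bw_1B}\bigr)$. Since $B$ is reduced and acts on the reduced scheme $X_P(w_2,\ldots,w_r)$ preserving it (the action descending from $X_{P_{red}}(w_\bullet)$ via the universal homeomorphism), the $B$-factor can be absorbed via a standard Bruhat-cell computation on the big cell $U^{-}\cdot B\subset G$, reducing the fiber to $(U(J,\vec{n})\cap\overline{Bw_1B})\cdot X_P(w_2,\ldots,w_r)$. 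Next, the inclusion $\mathcal{U}\subseteq\overline{Bw_1B}$ is standard: each $U(-\alpha_i)\subseteq\overline{Bs_{\alpha_i}B}$, and since $w_1=s_{\alpha_1}\cdots s_{\alpha_n}$ is a reduced expression, $\overline{Bs_{\alpha_1}B}\cdots\overline{Bs_{\alpha_n}B}\subseteq\overline{Bw_1B}$, so the scheme-theoretic image $\mathcal{U}$ lies in $\overline{Bw_1B}$. Intersecting with $U(J,\vec{n})$ then gives one inclusion.

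The main obstacle is the reverse inclusion
\[ U(J,\vec{n})\cap\overline{Bw_1B}\subseteq U(J,\vec{n})\cap\mathcal{U} \]
as closed subschemes of $U(J,\vec{n})$. A priori, $\overline{Bw_1B}\cap U^{-}$ may be strictly larger than $\mathcal{U}$ (e.g.\ when $X_B(w_1)$ is singular at the identity $T$-fixed point), so this direction is not formal. My strategy would be to work in the open big cell $U^{-}\cdot B\subset G$, parametrize $U(J,\vec{n})\cong\prod_{\beta\in J}\mathrm{Spec}\,k[Y_\beta]/(Y_\beta^{p^{n_\beta}})$ by its natural coordinates, and use the functional equation \eqref{func} relating $n_\beta$ for non-simple $\beta\in J$ to the $n_\delta$ for simple $\delta\in\Delta\cap J$. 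This should show that the defining ideal of $\overline{Bw_1B}$ restricts to the same ideal as $\mathcal{U}$ in the coordinate ring of $U(J,\vec{n})$: informally, the infinitesimal structure of $U(J,\vec{n})$ is governed by its simple-root components, and these are precisely the directions parametrized by $\mathcal{U}$, so any ``extra'' directions of $\overline{Bw_1B}$ beyond $\mathcal{U}$ are invisible infinitesimally to $U(J,\vec{n})$.
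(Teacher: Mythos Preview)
Your reduction is essentially correct and matches the paper: using Prop.\;\ref{Fiber of First Projection} and the left $B$-invariance of both $\overline{Bw_1B}$ and $X_P(w_2,\ldots,w_r)$, the problem reduces to computing $U(J,\vec n)\cap\overline{Bw_1B}$ inside $U^-$ (equivalently, $P/B\cap X_B(w_1)$ inside the big opposite cell $Y_B'(id)=U^-B/B$, which is how the paper phrases it). The easy inclusion $U(J,\vec n)\cap\mathcal U\subseteq U(J,\vec n)\cap\overline{Bw_1B}$ is fine.

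The gap is in your strategy for the reverse inclusion. Invoking \eqref{func} is problematic for two reasons. First, \eqref{func} is only asserted for $p>3$ or $G$ simply laced, whereas Prop.\;\ref{finer2} has no such hypothesis. Second, and more seriously, the heuristic that ``extra directions of $\overline{Bw_1B}$ beyond $\mathcal U$ are invisible to $U(J,\vec n)$'' is not an argument: the coordinates $Y_\beta$ for non-simple $\beta\in J$ are genuinely present in $U(J,\vec n)$, and you would need to control exactly how the defining ideal of $\overline{Bw_1B}$ restricts in those directions. Nothing in \eqref{func} does that.

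The paper avoids this entirely by identifying the \emph{full} intersection $Y_B'(id)\cap X_B(w_1)$ with $\mathcal U B/B$, not just its intersection with the infinitesimal $U(J,\vec n)$. Since $P/B$ is an infinitesimal thickening of $B/B\in Y_B'(id)$, one has $P/B\cap X_B(w_1)=P/B\cap\bigl(Y_B'(id)\cap X_B(w_1)\bigr)$. The key input is the Deodhar/Marsh--Rietsch parametrization \cite[Prop.\;5.2]{marsh2004parametrizations}: the open Richardson variety $X_B'(w_1)\cap Y_B'(id)$ equals $x_{-\alpha_1}(k^*)\cdots x_{-\alpha_n}(k^*)B/B$, and its closure in $Y_B'(id)$ (which is $Y_B'(id)\cap X_B(w_1)$ by irreducibility) is then $\mathcal U B/B$. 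Intersecting with $P/B=U(J,\vec n)B/B$ inside $U^-B/B\cong U^-$ gives the desired equality at once, with no case analysis on roots or characteristic.
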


\begin{proof}[Proof of Proposition \ref{finer}]
The closed points of $X_P'(w_1)$ are identified with the closed points of $X_{P_{red}}'(w_1)$ via the universal homeomorphism $\pi:G/P_{red}\to G/P$. Therefore the closed point $gP/P$ of $X_P'(w_1)$ has the form $u'wP/P$ with $u'\in P_{red}(k')$. 
Since $X_P(w_2,...,w_r)$ is invariant under the left multiplication by $P_{red}$, combined with Prop.\;\ref{Fiber of First Projection}, it suffices to show that
we have the following identity of closed subschemes of $G/P_{red}$: 
\begin{equation}
\label{03}
  u'w_1P/P_{red}\cap \overline{P_{red}w_1P_{red}/P_{red}}= u'\cdot ( U(w_1(J,\vec{n}))\cap P_{red})\cdot w_1P_{red}/P_{red}.  
\end{equation}

Since $u'w_1P/P_{red}$ is an infinitesimal thickening of the closed point $u'w_1P_{red}/P_{red}$, which is in the interior $P_{red}w_1P_{red}/P_{red}$, we have that
\begin{equation}
\label{04}
     u'w_1P/P_{red}\cap \overline{P_{red}w_1P_{red}/P_{red}}
=  u'w_1P/P_{red}\cap P_{red}w_1P_{red}/P_{red}.
\end{equation}

It is then easy to see that the right hand sides of (\ref{03}) and (\ref{04}) agree.
\end{proof}

\begin{proof}[Proof of Proposition \ref{finer2}]
From above we see that it suffices to show that
\begin{equation}
\label{eq_in_finer}
  P/B\cap\overline{Bw_1B/B}=(U(J,\vec{n})\cap \mathcal{U})B/B.  
\end{equation}

Since $P/B$ is an infinitesimal thickening of the identity flag $B/B$, which is in the interior of the open and dense opposite Schubert cell $Y_{B}'(id)=U(-R^+)B/B$ as discussed in Prop.\;\ref{opposite}, we have
\begin{equation}
    \label{fiberatid}
    P/B\cap\overline{Bw_1B/B}=P/B\cap (U(-R^+)B/B\cap \overline{Bw_1B/B}).
\end{equation}

The latter intersection is the closure of the open Richardson variety $Z_{B}(w_1,v)=X_{B}'(w_1)\cap Y_{B}'(id)$ inside $Y_{B}'(id)$. We use the Deodhar decomposition to obtain a parametrization of the open Richardson variety. Let us refer to \cite[(4.6)\& Prop.\;5.2]{marsh2004parametrizations}
for a detailed discussion of Deodhar decomposition. What is useful for us is that \cite[Prop\;5.2]{marsh2004parametrizations} gives us that
\[Z_{B}(w_1,v)=x_{-\alpha_1}(k^*)\cdot ...\cdot x_{-\alpha_n}(k^*)B/B,\]
where $w_1=s_{\alpha_1}...s_{\alpha_n}$ is a reduced expression of $w_1$ in terms of simple reflections, and $k^*$ is the units in the base field $k$. 
Taking the closure in $Y_{B}'(-id)$, we have the (\ref{eq_in_finer}) as desired. 
\end{proof}

From the proof of Prop.\;\ref{finer2} above, we can see that, by embedding the infinitesimal scheme $gP/P_{red}$ into an open subscheme of $G/P_{red}$ containing the point $gP_{red}/P_{red}$, the problem of understanding the nonreducedness of the fibers of $p_1$ is related to the structure of Richardson varieties, as defined after Prop.\;\ref{opposite}, in $G/P_{red}$.

Below we use Propositions \ref{finer} and \ref{finer2} to give examples of exotic phenomena related to the morphisms among BSDH varieties with nonreduced $P$.

In Ex.\;\ref{Nonreduced Fibers of $p_1$} below, we give an example where all the fibers of $p_1$ over $k$-points are isomorphic and nonreduced. 

\begin{example}[Nonreduced Fibers of $p_1$]
\label{Nonreduced Fibers of $p_1$}
Let $G=SL_5$. Let $P_{red}=B$ be a fixed Borel subgroup. Let $\alpha$, $\beta$, $\gamma$, and $\delta$ be the four positive simple roots labeling the four nodes in the Dynkin diagram $A_4$ from left to right respectively. Let $P=U(-\beta,1)\cdot B$, i.e., $U(J,\vec{n})=U(-\beta,1)$. We consider the first projection
\[ p_1: X_P(s_{\alpha}s_{\beta},\;s_{\delta})\to X_P(s_{\alpha}s_{\beta}).\]

By Prop.\;\ref{finer}, over a general point in the Schubert cell $Bs_{\alpha}s_{\beta}P/P$, the fiber of $p_1$ is isomorphic to
\[(U(-\beta,1)\cap U(s_{\beta}s_{\alpha}(R^+)))\cdot X_P(s_{\delta})=U(-\beta,1)\cdot X_P(s_{\delta}).\]

As $U(-\beta,1)$ is not contained in the stabilizer of points of $X_P(s_{\delta})$, we have that $U(-\beta,1)\cdot X_P(s_{\delta})$ is nonreduced.

Prop.\;\ref{finer} also entails that the fiber of $p_1$ over the identity flag $P/P$ is isomorphic to
\[(U(-\beta,1)\cap U(\{-\alpha,-\beta\}))\cdot X_P(s_{\delta})=U(-\beta,1)\cdot X_P(s_{\delta}),\]
which is isomorphic to the general fiber.

We now consider the fibers over points in the Schubert cells $Bs_{\alpha}P/P$ and $Bs_{\beta}P/P$. Over $Bs_{\alpha}P$, it suffices to determine
\[s_{\alpha}P/B\cap X_B(s_{\alpha}s_{\beta})=s_{\alpha}P/B\cap s_{\alpha}U(-R^+)B/B\cap X_B(s_{\alpha}s_{\beta}).\]

By Prop.\;\ref{opposite} or \cite[Prop.\;1.3.5]{Brion}, we have that 
\[s_{\alpha}U(-R^+)B/B\cap X_B(s_{\alpha}s_{\beta})=Bs_{\alpha}B/B\times (U(-R^+)s_{\alpha}B/B\cap X_B(s_{\alpha}s_{\beta})).\]

By Deodhar decomposition \cite[(4.6)\& Prop.\;5.2]{marsh2004parametrizations}, 
the second factor is $U(-R^+)s_{\alpha}B/B\cap X_B(s_{\alpha}s_{\beta})=s_{\alpha}U(-\beta)B/B$.
Therefore we have that 
\[s_{\alpha}P/B\cap X_B(s_{\alpha}s_{\beta})= s_{\alpha}U(-\beta,1) B/B\cap s_{\alpha} U(\{-\alpha,-\beta\})B/B=s_{\alpha}U(-\beta,1)B/B.\]
Therefore we have that the fiber over a closed point in $Bs_{\alpha}P/P$ is isomorphic to $U(-\beta,1)\cdot X_P(\delta)$.

Over the Schubert cell $Bs_{\beta}P/P$, we use Prop.\;\ref{opposite} and Deodhar decomposition again to obtain that
\[s_{\beta}P/B\cap X_B(s_{\alpha}s_{\beta})=s_{\beta}U(-\beta,1)B/B\cap s_{\beta}U(\{-\beta,-\alpha-\beta\})B/B=s_{\beta}U(-\beta,1)B/B.\]

In conclusion, we see that the fibers of $p_1$ over all the $k$-points of $X_P(w_1)$ are isomorphic to an infinitesimal thickening of $X_P(\delta)$, $U(-\beta,1)\cdot X_P(\delta)$.
\end{example}

The calculation in Ex.\;\ref{Nonreduced Fibers of $p_1$} shows that, in order to determine the fiber of $p_1:X_P(w_{\bullet})\to X_P(w_1)$ over a point in the Schubert cell $BvP/P$ with $v<w$, we can always first use the identity 
\[vP/P_{red}\cap X_{P_{red}}(w_1)=vP/P_{red}\cap (vU(-R^+)P_{red}/P_{red}\cap X_{P_{red}}(w_1)).\]
The latter intersection is then isomorphic to $X_{P_{red}}'(v)\times Z$, where $Z$ is the closure of the open Richardson variety (defined after Prop.\;\ref{opposite}) $Z_{P_{red}}'(w_1,v)$ inside $Y_{P_{red}}'(v)$.
The part $X_{P_{red}}'(v)$, combined with the first half of the proof of Prop.\;\ref{finer}, entails that the fiber is an infinitesimal thickening of 
\[(U(J,\vec{n})\cap U(v^{-1}(I)))\cdot X_P(w_2,...,w_r),\]
which may already be nonreduced. 
The study of the fiber of $p_1$ is then reduced to the study of closures of open Richardson varieties in opposite Schubert cells.
From this procedure, we see that the fibers of $p_1:X_P(w_{\bullet})\to X_P(w_1)$ (or $\pi:X_{P_{red}}(w_1)\to X_P(w_1)$) have different descriptions over different parts of $X_P(w_1)$. 
Example \ref{sl52} below is an example where the fibers of $p_1: X_P(w_{\bullet})\to X_P(w_1)$ are not isomorphic to each other: 

\begin{example}[Non-isomorphic fibers of $p_1$]
\label{sl52}
Let $G$, $P_{red}$, and the roots be as in Example \ref{Nonreduced Fibers of $p_1$}.
Let $P=U(-\alpha,1)\cdot B$. We consider the first projection 
\[p_1: X_P(s_{\alpha}s_{\beta},s_{\delta})\to X_P(s_{\alpha}s_{\beta}).\]

Running the calculation as in Example \ref{Nonreduced Fibers of $p_1$} again, we have that the fibers of $p_1$ over the fixed points of $X_{P}(s_{\alpha}s_{\beta})$ are:
\begin{align*}
    p_1^{-1}(s_{\alpha}s_{\beta}P/P) &= X_P(s_{\delta}); & p_1^{-1}(s_{\alpha}P/P) & = U(-\alpha,1)\cdot X_P(s_{\delta});\\
    p_1^{-1}(s_{\beta}P/P) &= X_P(s_{\delta}); & p_1^{-1}(P/P) &= U(-\alpha,1)\cdot X_P(s_{\delta}). 
\end{align*}
\end{example}

In the Example \ref{sl52}, we have reduced general fibers and nonreduced special fibers (over $X_P(s_{\alpha})$). We then have the natural question: Can the situation be reversed? The following general lemma gives a negative answer.

\begin{lemma}
Let $X$ and $Y$ be two geometrically integral schemes over a field $k$.
Let $f:X\to Y$ be a proper dominant $k$-morphism.
Then the set $\{y\in Y| \;X_y\text{ is geometrically reduced}\}$ is open in $Y$.
\end{lemma}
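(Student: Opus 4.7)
The plan is to show $E := \{y \in Y : X_y \text{ is geometrically reduced}\}$ is open by combining local constructibility with stability under generalization: in a locally Noetherian scheme, these together imply openness. Since $X$ and $Y$ are of finite type over $k$, the morphism $f$ is of finite presentation, so \cite[IV.9.7.7]{EGA} yields that $E$ is locally constructible in $Y$. It therefore suffices to establish the generalization-stability of $E$: for any $y_0, y_1 \in Y$ with $y_1 \in \overline{\{y_0\}}$ and $y_1 \in E$, one must conclude $y_0 \in E$.

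For this, I would pass to the DVR case via the valuative criterion. Pick a DVR $R$ with fraction field $K$ and residue field $\kappa$, together with a morphism $\varphi: \text{Spec}(R) \to Y$ sending the generic point to $y_0$ and the closed point to $y_1$. Base-changing $f$ along $\varphi$ produces a proper morphism $g: Z := X \times_Y \text{Spec}(R) \to \text{Spec}(R)$; its special fiber $Z_\kappa \cong X_{y_1} \otimes_{\kappa(y_1)} \kappa$ is geometrically reduced as a base change of a geometrically reduced scheme along a field extension. By faithfully flat descent of geometric reducedness along $\kappa(y_0) \hookrightarrow K$, it suffices to show that $Z_K = X_{y_0} \otimes_{\kappa(y_0)} K$ is geometrically reduced.

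The remaining step is the DVR assertion: for a proper $g: Z \to \text{Spec}(R)$ over a DVR with geometrically reduced special fiber, the generic fiber is geometrically reduced. I plan to replace $Z$ by the scheme-theoretic closure $Z^0 \hookrightarrow Z$ of its generic fiber, equivalently killing the uniformizer-torsion in $\mathcal{O}_Z$. Then $g^0 : Z^0 \to \text{Spec}(R)$ is proper and flat with the same generic fiber as $g$. Provided one can argue that the closed immersion $Z^0_\kappa \hookrightarrow Z_\kappa$ preserves geometric reducedness—using properness of $g$, the geometric integrality of $X$ over $k$, and a careful analysis of the primary structure of the uniformizer-torsion ideal—the flat-case openness result \cite[IV.12.2.4]{EGA} applied to $g^0$ yields that the locus of geometrically reduced fibers is open in the two-point base $\text{Spec}(R)$; containing the closed point, it also contains the generic point, so $Z_K = Z^0_K$ is geometrically reduced. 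The main obstacle is this asserted inheritance of geometric reducedness by $Z^0_\kappa$, which is subtle because closed subschemes of geometrically reduced schemes are not in general reduced, and so requires genuine use of the properness hypothesis to control how the torsion-killing interacts with the structure of the special fiber.
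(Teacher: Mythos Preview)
Your overall structure---constructibility via EGA, then stability under generalization, then reduction to a DVR base---matches the paper's exactly. The gap you flag in the DVR step is real, and you do not fill it: you need $Z^0_\kappa$ to be geometrically reduced, but a closed subscheme of a reduced scheme need not be reduced, and your appeal to properness and to ``a careful analysis of the primary structure of the uniformizer-torsion ideal'' is not an argument. There is no evident mechanism by which properness of $g$ alone forces the ideal cutting out $Z^0_\kappa$ inside $Z_\kappa$ to be radical, so as written the proof is incomplete at precisely the point where the lemma has content.

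The paper handles the DVR case by a direct elementary argument (following Stacks tag 0C0D) that bypasses both the flattening step and the citation of the flat-case result. Suppose the generic fiber $X_\eta$ is nonreduced; since $X_\eta$ is open in $X$, there is a point $x$ with $\mathcal{O}_{X,x}$ nonreduced. By properness, $x$ specializes to some $x'$ in the special fiber, and then $\mathcal{O}_{X,x'}$ is nonreduced as well, since it localizes to $\mathcal{O}_{X,x}$. Take a nilpotent $f\in\mathcal{O}_{X,x'}$; by Krull's intersection theorem write $f=\pi^n f'$ with $f'\notin\pi\mathcal{O}_{X,x'}$. Because $X$ is integral and dominates $\text{Spec}(R)$, the uniformizer $\pi$ is a nonzerodivisor in $\mathcal{O}_{X,x'}$, so $f'$ is again nilpotent, and its image in $\mathcal{O}_{X_y,x'}=\mathcal{O}_{X,x'}/(\pi)$ is a nonzero nilpotent. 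Hence the special fiber is nonreduced. Properness is used exactly once, to produce the specialization from $x$ to $x'$; the integrality of $X$ supplies the nonzerodivisor property that your passage to $Z^0$ was trying to manufacture. The point is to work on a total space where $\pi$ is already a nonzerodivisor, so that one never has to compare a flattening's special fiber with the original special fiber---which is exactly where your approach gets stuck.
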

\begin{proof}
We follow the argument as in \cite[0C0E,0C0D]{stacks-project}. 

Using the same argument as in \cite[0C0E]{stacks-project} we are reduced to the case where $Y=Spec(R)$ with $R$ a discrete valuation ring. It suffices to show that if the generic fiber $X_{\eta}$ is nonreduced, then the special fiber $X_{y}$ is also nonreduced.
We then imitate the argument as in \cite[\;\!0C0D]{stacks-project} to show it:

Let $x\in X_{\eta}$ be a point such that the ring $\mathcal{O}_{X_{\eta},x}$ is nonreduced. 
Let us identify $x$ with its image in $X$ via the open embedding $X_{\eta}\hookrightarrow X$. 
We then have that the ring $\mathcal{O}_{X,x}$ is also nonreduced.
Since the morphism $f$ is proper, the point $x$ has a specialization $x'$ inside the special fiber $X_y$.
We then have that the ring $\mathcal{O}_{X,x'}$ is also nonreduced.
Let $f\in \mathcal{O}_{X,x'}$ be a nilpotent element.
Let $\pi$ be the uniformizer of $R$.
By Krull's intersection theorem \cite[\;\!00IP]{stacks-project}, there exists a natural number $n$ such that $f=\pi^n f'$ and $f' \notin \pi \mathcal{O}_{X,x'}$. 
Since $X$ is integral and $f$ is dominant, we have that $\pi$ is not a zero divisor in $\mathcal{O}_{X,x'}$.
Therefore $f'$ is also a nilpotent element in $\mathcal{O}_{X,x'}$.
Since $f' \notin \pi\mathcal{O}_{X,x'}$, its image in $\mathcal{O}_{X,x'}/\pi\mathcal{O}_{X,x'}=\mathcal{O}_{X_y,x'}$ is a nonzero nilpotent element.
Thus the special fiber $X_y$ is not reduced.
\end{proof}

In the examples above, no explicit equations are used. 
In Ex.\;\ref{nonreducedfiber} below, we give another example of the nonreduced fibers of $p_1: X_P(w_{\bullet})\to X_P(w_1)$ using defining equations.

\begin{example}[Equations for Nonreduced Fibers]
\label{nonreducedfiber}
Let $G=SL_3$, and let $P_{red}=B$, the fixed Borel subgroup. Let $\alpha$ and $\beta$ be the two simple positive roots. Let $P=U(-\alpha,1)\cdot P_{red}$.
By \cite[231]{LM}, we have an embedding 
\[X_{B}(s_{\alpha},s_{\beta})\hookrightarrow Gr(1,3)\times Gr(2,3)\cong \mathbb{P}^2\times 
\check{\mathbb{P}}^2\]
given by the map $(g_1 B, g_2 B)\mapsto (g_1\langle e_1\rangle,  g_2 \langle e_1,
e_2\rangle)$, where each $e_i$ is a standard coordinate of $\mathbb{A}^3$.
The image satisfies the relation $\langle e_1, e_2\rangle \supset g_1\langle e_1\rangle \subset g_2\langle e_1, e_2\rangle$. Let the homogeneous coordinates be $(x: y: z; \;a:b:c)$, then the defining equations for the
image of $i$ are $ax+by+cz=0$ and $z=0$. 

Similarly, we have an embedding
\[X_P(s_{\alpha},s_{\beta})\hookrightarrow Frob(Gr(1,3))\times Gr(2,3)\cong \mathbb{P}^2\times \check{\mathbb{P}^2}\]
given by the map $(g_1 P, g_2P)\mapsto (Frob(g_1\langle e_1\rangle), g_2\langle e_1, e_2\rangle)$.
The image satisfies the relation $\langle e_1, e_2\rangle \supset g_1\langle e_1\rangle \subset Frob(g_2\langle e_1, e_2\rangle)$.
The defining equations are $z=0$ and $a^p x+b^p y+c^p z=0$.
Restricting the embedding above to the first factor, we have an embedding $X_P(s_{\alpha})\hookrightarrow Frob(Gr(1,3))$ with image isomorphic $\mathbb{P}^1$, defined by $z=0$.
The first projection $p_1:X_P(s_{\alpha},s_{\beta})\to X_P(s_{\alpha})$ is the first projection $\mathbb{P}^2\times\check{\mathbb{P}}^2\to \mathbb{P}^2$ restricted to $X_P(s_{\alpha},s_{\beta})$. 
For every point with coordinate $(x_0:y_0:0)\in X_P(s_{\alpha})$, the fiber of $p_1$ is the subscheme in $\check{\mathbb{P}}^2$ defined by $x_0a^p+y_0b^p+0c^p=0$.
Since the field $k$ is perfect and $char(k)=p$, we have that the fiber is defined by $(x_0^{1/p}a+y_0^{1/p}b)^p=0$, hence nonreduced.
\end{example}

The example above is also an example of non-normal BSDH varieties when $P$ is nonreduced:
\begin{example}[Non-normal BSDH Variety]
\label{nonnormalbsdh}
Take the $X_P(s_{\alpha},s_{\beta})$ as in Ex.\;\ref{nonreducedfiber}. It is the subvariety of $\mathbb{P}^2\times\check{\mathbb{P}}^2$ defined by the homogeneous ideal $\langle a^px+b^py,z\rangle$. On the chart $x \ne 0$ $c \ne 0$, the variety is defined by the ideal $\langle a^p+b^py,z\rangle$. By computing the Jacobian, we see that the singular locus consists of the points with coordinate $(a:0:c;\; x:y:0)$. Therefore the singular locus is of codimension 1.
\end{example}

\begin{example}[Non-normal Schubert Varieties]
\label{Nonnormalschubert}
For each $n\ge 2$, we can find a non-normal Schubert variety of dimension $n$:

We use the twisted incidence varieties, which are called unseparated incidence varieties in \cite[Sec.\;2.2]{Lau96} (Schubert varieties are not discussed there). Take $G=SL_{n+1}$. The parabolic $P$ is set up so that $G/P_{red}$ is the incidence variety 
$\sum_{i=1}^{n+1}x_iy_i=0$
in $\mathbb{P}^n\times \check{\mathbb{P}}^n$, and $G/P$ is the unseparated incidence variety 
$\sum_{i=1}^{n+1}z_iw_i^p=0$.
The quotient $\pi:G/P_{red}\to G/P$ is given by the ring map $z_i\mapsto x_i^p$ and $w_i\mapsto y_i$. 
For example, when $n=3$, for any $k$-algebra $A$, the $A$-points $P(A)$ consists of the matrices of the form
\[
\begin{bmatrix}
* & * & * & * \\
\epsilon & * & * & * \\
\epsilon & * & * & * \\
0 & 0 & 0 & *
\end{bmatrix},\;\;\; \epsilon^p=0.
\]

By \cite[Ex.\;1.2.3.5]{Brion}, the Schubert varieties of $G/P_{red}$ are of the form $I_{i,j}'$ with $1\le i,j\le n+1$, $i \ne j$, where each $I_{i,j}'$ is defined by the homogeneous ideal
\[x_{i+1}=...=x_{n+1}=y_1=...=y_{j-1}=0.\]

Therefore the Schubert varieties of $G/P$ are of the form $I_{i,j}$ where each $I_{i,j}$ is defined by the homegenous ideal
\[z_{i+1}=...=z_{n+1}=w_1=...=w_{j-1}=0.\]

When $n=2$, we take the Schubert variety $I_{2,1}$. The homogeneous ideal is $\langle z_1w_1^p+z_2w_2^p,z_3\rangle$, comparing the equations, we see that $I_{2,1}$ is isomorphic to the BSDH variety $X_P(s_{\alpha},s_{\beta})$ discussed in Ex.\;\ref{nonnormalbsdh}, hence non-normal.

When $n\ge 3$, we take the Schubert variety $I_{3,1}$. This is an $n$-dimensional Schubert variety. In the affine chart $z_{1} \ne 0, w_{n+1} \ne 0$, the defining ideal is
$\langle z_2w_2^p+z_3w_3^p,w_1,z_4,...,z_{n+1}\rangle$. 
By computing the Jacobian and noticing that $char(k)=p$, we have that the singular locus is defined by
$w_1=w_2^p=w_3^p=z_4=...=z_{n+1}=0$, which has codimension 1 in $I_{3,1}$. Therefore $I_{3,1}$ is non-normal.
\end{example}

M. Brion points out that the Ex.\;\ref{Nonnormalschubert} above shows that in general the natural morphism ${}^B\!X_{P_{red}}(w)\to {}^B\!X_{P}(w)$ is not flat.

\begin{rmk}[Nonflat Morphisms ${}^B\!X_{P_{red}}(w)\to {}^B\!X_P(w)$]
\label{notflatincidence}
We show that the natural morphisms $\pi: I'_{2,1}\to I_{2,1}$ and $\pi: I'_{3,1}\to I_{3,1}$, as in the Ex.\;\ref{Nonnormalschubert} above, are not flat. 

Suppose $\pi: I'_{2,1}\to I_{2,1}$ is flat, then by \cite[Cor.\;6.5.2.(i)]{EGA-IV.2}, we see that if $x$ is a regular point of $I'_{2,1}$, then $\pi(x)$ is a regular point of $I_{2,1}$. However, the singular locus of $I_{2,1}$ is of codimension 1, while $I'_{2,1}$ is normal. Hence $\pi: I'_{2,1}\to I_{2,1}$ is not flat. The same reasoning shows that $\pi: I'_{3,1}\to I_{3,1}$ is not flat either.
\end{rmk}

In contrast to Ex.\;\ref{Nonnormalschubert}, we have the following proposition:

\begin{prop}
No matter whether the parabolic $P$ is reduced or not, one dimensional Schubert varieties of $G/P$ are always isomorphic to $\mathbb{P}^1$.
\end{prop}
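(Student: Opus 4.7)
The plan is to realize any one-dimensional Schubert variety ${}^B\!X_P(w)$ as a quotient of a rank-one semisimple subgroup of $G$ by a (possibly nonreduced) Borel, and then to show such a quotient is always isomorphic to $\mathbb{P}^1$.

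First, I would argue that $w$ must be a simple reflection. Since $\pi: G/P_{red}\to G/P$ is purely inseparable, hence a universal homeomorphism, the dimension of ${}^B\!X_{P_{red}}(w)$ equals that of ${}^B\!X_P(w)$, namely one. In the classical case, this forces $w=s_\alpha$ for a simple root $\alpha$ (not in the Levi $L_I$). Let $G_\alpha \subset G$ be the rank-one semisimple subgroup generated by $U(\alpha)$ and $U(-\alpha)$, so $G_\alpha \cong SL_2$ or $PGL_2$. Classically ${}^B\!X_{P_{red}}(s_\alpha) = G_\alpha \cdot P_{red}/P_{red}$; applying $\pi$ yields ${}^B\!X_P(s_\alpha) = G_\alpha \cdot P/P$, which I would identify scheme-theoretically with the orbit $G_\alpha/(G_\alpha \cap P)$. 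The morphism $G_\alpha \to G/P$ factors through a monomorphism $G_\alpha/(G_\alpha \cap P) \hookrightarrow G/P$, which is a closed immersion because the source is projective (its stabilizer being parabolic in $G_\alpha$, as established next).

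Next, I would identify $B' := G_\alpha \cap P$ as a (possibly nonreduced) Borel of $G_\alpha$. It contains the classical Borel $B_\alpha := G_\alpha \cap P_{red}$, which is indeed a Borel (otherwise the classical Schubert variety would be zero-dimensional). In the rank-one group $G_\alpha$, any subgroup scheme containing a Borel is either the whole group or a Borel; the former possibility is ruled out because it would force ${}^B\!X_P(s_\alpha)$ to be a single point.

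Finally, I would prove $G_\alpha/B' \cong \mathbb{P}^1$. By the theorem on quotients of smooth group schemes by closed subgroup schemes (already invoked in the excerpt above Prop.\ \ref{opposite}), $G_\alpha/B'$ is smooth; it is also projective (closed in $G/P$) and integral. The natural map $G_\alpha/B_\alpha \cong \mathbb{P}^1 \to G_\alpha/B'$ is finite and purely inseparable with infinitesimal fibers $\cong B'/B_\alpha$, so it induces a finite purely inseparable inclusion of function fields $k(G_\alpha/B') \hookrightarrow k(t)$. Since $k$ is perfect, every intermediate field of the purely inseparable extension $k(t)/k(t^{p^n})$ equals $k(t^{p^m})$ for some $m \le n$, which is itself a rational function field. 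Hence $G_\alpha/B'$ is a smooth projective curve with rational function field over $k$, and therefore isomorphic to $\mathbb{P}^1$. The subtlest step I expect is the scheme-theoretic identification ${}^B\!X_P(s_\alpha) = G_\alpha/(G_\alpha \cap P)$, since ${}^B\!X_P(s_\alpha)$ is defined as a scheme-theoretic image under $\pi$; this will follow because both sides are reduced closed subschemes of $G/P$ with the same underlying set, together with projectivity of $G_\alpha/(G_\alpha \cap P)$.
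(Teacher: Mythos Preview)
Your argument is correct, but it takes a genuinely different route from the paper's proof.

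The paper argues directly by exhibiting two affine charts on ${}^B\!X_P(s)$, each isomorphic to $\mathbb{A}^1$. First, the intersection of ${}^B\!X_P(s)$ with the big opposite cell $U(-I)P/P$ is a $T$-stable closed curve through the origin in a $T$-linearized affine space, hence a coordinate line $\mathbb{A}^1$; this handles a neighborhood of the identity flag $P/P$. Second, the open Schubert cell $BsP/P$ is itself isomorphic to $\mathbb{A}^1$ (by the affine paving already established), giving smoothness at the other $T$-fixed point $sP/P$. A smooth projective curve covered by two copies of $\mathbb{A}^1$ is $\mathbb{P}^1$.

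Your approach instead passes through the rank-one subgroup $G_\alpha$, identifies ${}^B\!X_P(s_\alpha)$ with the homogeneous space $G_\alpha/(G_\alpha\cap P)$, classifies $G_\alpha\cap P$ as a proper (possibly nonreduced) parabolic of $G_\alpha$, and then concludes via a function-field argument. This is more group-theoretic and somewhat longer; the step identifying the scheme-theoretic image with the orbit $G_\alpha/(G_\alpha\cap P)$ requires the care you flag. Two remarks: your explicit description of the intermediate fields of $k(t)/k(t^{p^n})$ is more than you need, since L\"uroth's theorem already gives that any intermediate field of $k(t)/k$ of transcendence degree one is rational, without any separability or perfectness hypothesis; and what you call a ``Borel'' of $G_\alpha$ is really a proper parabolic subgroup scheme. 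The paper's argument is shorter and avoids both the orbit identification and the function-field detour, while yours has the mild advantage of making the homogeneous-space structure of the Schubert curve explicit.
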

\begin{proof}
The argument in Ex.\;\cite[Ex.\;1.3.4.(2)]{Brion} still works here. Namely, any one dimensional Schubert variety in $G/P$ has the form ${}^B\!X_P(s)$ with $s$ a simple reflection, we have that blue${}^B\!X_P(s)\cap U(-I)P/P$, being $T$-invariant, is the affine line $\mathbb{A}^1$ in the direction $s$. Therefore it remains to check that  {${}^B\!X_P(s)$} is smooth at the other $T$-fixed point $sP/P$, which follows from the smoothness of the Schubert cell $B s P/P$.
\end{proof}

Proposition \ref{Fiber of First Projection}
implies that the first projection $X_P(w_1,...,w_r)\to X_P(w_1)$ in general cannot be a Zariski locally trivial fibration as the domain is reduced but, in general, the fibers over closed points are not. 
This failure to be a Zariski locally trivial fibration can be explained from another perspective as shown in the following Remark \ref{multiplication}:

\begin{rmk}
\label{multiplication}
Fix a $k$-algebra $A$ and we only consider $A$-points in this remark.

Let us first recall why $X_{P_{red}}(w_{\bullet})$ is a Zariski locally trivial fibration in the classical case:
Let $g_0=id\in G$.
\[V'(w_{\bullet}):=\{(g_1,...,g_r)\in G^r|\; g_{i-1}^{-1}g_i\in P_{red} w_i P_{red}\}.\]

The multiplication map gives an isomorphism
$m: V'(w_1)\times V'(w_2)\xrightarrow{\sim} V'(w_1, w_2)$,
which induces an isomorphism
$X_{P_{red}}'(w_1)\times X_{P_{red}}'(w_2)\xrightarrow{\sim} X_{P_{red}}'(w_1,w_2)$.

Now we consider $X_P(w_{\bullet})$ instead of $X_{P_{red}}(w_{\bullet})$.
We have that 
$X_P'(w_1, w_2)\cong V'(w_1, w_2)/\sim_1$ where $\sim_1$ is an equivalence relation defined as $(g_1, g_2)\sim_1(g_1 p_1,g_2 p_2)$ for some $p_i\in P$.
Note this is not a quotient by group action, as $P$ does not act on $V'(w_1,w_2)$. 
Pulling back $\sim_1$ along $m$ and taking the quotient by $P$ on the second factor, we have that $X_P'(w_1,w_2)\cong (V'(w_1)\times X_P'(w_2))/\sim_2$, where $\sim_2$ is an equivalence relation defined as
$(g_1,b)\sim_2 (g_1 p,p^{-1} b)$ for some $p\in P.$

On the other hand, we have that
$X_P(w_1)\times X_P(w_2)\cong (X_P(w_1)\times P_I)\times^{P_I} X_P(w_2)$. 
The map $V'(w_1)\to X_P(w_1)\times P_I$, defined by $uw_1p\mapsto (uw_1P, p)$ with $u\in U(I\cap w_1(-I))$, is the quotient by the relation $uw_1p\sim u\epsilon w_1 p$ for $\epsilon\in U(I\cap w(J,\vec{n}))$.
Therefore we see that 
$X_P'(w_1)\times X_P'(w_2)\cong (V'(w_1)\times X_P'(w_2))/\sim_3$, where $\sim_3$ is an equivalence relation defined as follows: 
Every $g\in V'(w_1)$ can be written as $g=u(g)w_1p(g)$ uniquely for $u(g)\in U(I\cap w(-I))$ and $p(g)\in P_I$. The equivalence relation $\sim_3$ is defined as 
$(g, b)\sim_3(gp,p^{-1} b)\sim_3 (u(g)\epsilon w_1 p(g), b)$, for $p\in P_I$ and $\epsilon\in U(I)\cap U(w(J,\vec{n}))$.

When $P \ne P_{red}$, the first part of $\sim_3$ is included in $\sim_2$ but the second part of $\sim_3$ is not, so we see that $\sim_2$ and $\sim_3$ are in general not comparable, hence the multiplication $m$ above in general does not induce a morphism between $X_P'(w_1)\times X_P'(w_2)$ and $X_P'(w_1,w_2)$.

\end{rmk}

Below we give an example of nonreduced fibers of the last projection, where $p>0$ can be any prime number.
This example \ref{lastprojection2} is also curious because the target of the last projection is the whole variety $G/P$.

\begin{example}[Nonreduced Fibers of $p_3$]
\label{lastprojection2}
We use the setup in Ex.\ref{nonreducedfiber}, i.e., $G=SL_3$ and $P=U(-\alpha,1)\cdot B$. Consider the last projection
\[p_3: X_P(s_{\beta}, s_{\alpha}, s_{\beta})\to X_P(s_{\beta}s_{\alpha}s_{\beta})=G/P.\]

A point in the big Schubert cell $X_P'(s_{\beta}s_{\alpha}s_{\beta})$ has the form $us_{\beta}s_{\alpha}s_{\beta}P/P$ for some $u\in R_u(B)=U(\alpha,\beta,\alpha+\beta)$. We have that 
\[us_{\beta}s_{\alpha}s_{\beta} P/P= us_{\beta}s_{\alpha}s_{\beta} U(-\alpha,1) P/P=uU(\beta,1) s_{\beta}s_{\alpha}s_{\beta} P/P.\] 

Therefore the fiber 
$p_3^{-1}(us_{\beta}s_{\alpha}s_{\beta}P/P)$ is isomorphic to the direct product 
\[uU(\beta,1)s_{\beta}P/P\times uU(\beta,1) s_{\beta}s_{\alpha}P/P= us_{\beta} U(-\beta,1) P/P\times us_{\beta}s_{\alpha} U(-\alpha-\beta,1) P/P.\] 

Since $-\beta, -\beta-\alpha \notin J$, we have that $p_3^{-1}(us_{\beta}s_{\alpha}s_{\beta}P/P)$ is nonreduced.
Therefore the last projection $p_3$ is not birational.
\end{example}

\section{Nice Topology}\label{top}

Although the geometry of $X_P(w_{\bullet})$ differs greatly from that of classical BSDH varieties, the topology of the newly constructed BSDH varieties remains the same as for the corresponding classical BSDH varieties, due to the fact that $\pi:G/P_{red}\to G/P$ is purely inseparable, finite, and surjective, hence a universal homeomorphism \cite[Prop.\;2.4.4]{EGA-IV.2}. 
Over a finite or algebraically closed field, we can then generalize some results in \cite{dCHL} to the case involving $X_P(w_{\bullet})$ for a nonreduced parabolic $P$. Namely, given a generalized convolution morphism $f:X_P(w_{\bullet})\to X_Q(w_{\theta,\bullet}'')$ (defined in Def. \ref{generalizedconvolution}), a special case of which is the last projection $p:X_P(w_{\bullet})\to X_P(w_{\star})$, we prove that the decomposition theorem holds for $f$, and that the push forward of the intersection complex $f_*\mathcal{IC}_{X_P(w_{\bullet})}$ is \textit{good}, which is a notion defined in \cite[Def.\;1.2.1]{dCHL} and recalled below in Def. \ref{gooddef}.\\

We first recall some notation. Let $k$ be a finite field. For $X$ a separated scheme of finite type over $k$,
the triangulated category $D^b_m(X,\overline{\mathbb{Q}}_l)$ is the mixed, bounded, and constructible "derived" category with self dual perversity as in \cite[Sec.\;2.2.10-19]{BBD}. The truncation functors for the standard t-structure are denoted $\tau_{\le i}$ and $\tau_{\ge i}$ for $i\in  \mathbb{Z}$. Let $\bar{x}\in X(\bar{k})$ be a geometric point, and let $x$ be the closed point that is the image of $\bar{x}: \text{Spec}(\bar{k})\to X$. For every $F\in D_m^b(X,\overline{\mathbb{Q}}_l)$, the stalk of the $i$-th cohomology sheaf $\mathcal{H}^i(F)_{\bar{x}}$ is a $Gal(k(x)_{s}/k(x))$-module, where $k(x)_s$ is the separable closure of $k(x)$. In the rest of the paper, when we consider the stalk of $F\in D_m^b(X,\overline{Q}_l)$ at some $\bar{x}\in X(\bar{k})$ as a Galois module, we always mean the structure of a $Gal(k(x)_s/k(x))$-module.

Now we can define the notion of \textit{good}:

\begin{definition}\label{gooddef}
We say that $F\in D^b_m(X,\overline{\mathbb{Q}}_l)$ is good if $F$ is
\begin{enumerate}
\item Semisimple, i.e., it is isomorphic to a direct sum of shifted simple perverse sheaves, which, by \cite[Sec.\;4.3]{BBD}, are of the form $j_{!*}(L[dimV])$, where $j:V\hookrightarrow X$ is an inclusion of irreducible smooth subvariety, and $L$ is an irreducible lisse $\overline{\mathbb{Q}}_l$-sheaf over $V$;
\item Frobenius semisimple, i.e., for every $\bar{x}\in X(\bar{k})$, the stalks of the cohomology sheaves $\mathcal{H}^i(F)_{\bar{x}}$ are semisimple as graded Galois modules;
\item Even, i.e., $\mathcal{H}^i(F)_{\bar{x}}$ is trivial for $i$ odd;
\item Very pure with weight zero, i.e., let $F^{\vee}$ be the Verdier dual of $F$, then for each degree $i\in  \mathbb{Z}$ and $\bar{x}\in X(\bar{k})$, both stalks $\mathcal{H}^i(F)_{\bar{x}}$ and $\mathcal{H}^i(F^{\vee})_{\bar{x}}$ have weight $i$;
\item Tate, i.e., for every $\bar{x}\in X(\bar{k})$, the stalk $\mathcal{H}^i(F)_{\bar{x}}$ is isomorphic to a direct sum of Tate modules $\overline{\mathbb{Q}}_l(-k)$ of weight $2k$ for possibly varying $k\in  \mathbb{Z}$.
\end{enumerate}
\end{definition}

The following lemma is probably well known, but we cannot find an explicit reference:

\begin{lemma}
\label{seplemma}
Given any finite purely inseparable field extension $k\hookrightarrow k'$, we have a commutative diagram of field extensions:
\begin{equation}
\label{sep}
\begin{tikzcd}
\bar{k} \arrow[hookleftarrow]{r}&
k'_s \arrow[hookleftarrow]{r}&
k_s\\
&
 k' \arrow[hookleftarrow]{r}
 \arrow[hookrightarrow]{u}&
 k\arrow[hookrightarrow]{u}
\end{tikzcd},
\end{equation}
where $k_s$ and $k_s'$ are separable closures of $k$ and $k'$, and $\bar{k}$ is an algebraic closure of both $k$ and $k'$.
By restriction, we have a group isomorphism $\psi: Gal(k_s'/k')\to Gal(k_s/k)$. 
\end{lemma}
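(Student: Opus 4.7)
The plan is to exploit the uniqueness of $p^n$-th roots in the algebraically closed field $\bar{k}$ to transport Galois elements between the two towers. First I would check that the restriction map is even well-defined, which reduces to $k_s \subset k_s'$ inside $\bar{k}$: any element of $\bar{k}$ that is separable over $k$ is automatically separable over $k'$, since its minimal polynomial over $k'$ divides its minimal polynomial over $k$. A $k'$-automorphism $\sigma$ of $k_s'$ fixes $k$ and permutes the set of elements of $k_s'$ separable over $k$, so $\sigma(k_s)=k_s$ and $\psi$ makes sense.

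The structural heart of the argument is the identity $k_s' = k_s \cdot k'$. To establish it, I would show two complementary facts: first, $k_s \cdot k'$ is separable over $k'$ because it is the compositum of $k'$ with a separable extension of $k$, so $k_s \cdot k' \subset k_s'$; second, $k_s'/k_s$ is purely inseparable, since for any $\alpha \in k_s'$ the minimal polynomial of $\alpha$ over $k$ has the form $g(x^{p^n})$ with $g$ separable, giving $\alpha^{p^n} \in k_s$. The intermediate extension $k_s' / (k_s \cdot k')$ is then simultaneously separable (as a subextension of $k_s'/k'$) and purely inseparable (as a subextension of $k_s'/k_s$), hence trivial.

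With $k_s' = k_s \cdot k'$ in hand, injectivity of $\psi$ is immediate: any $\sigma \in \mathrm{Gal}(k_s'/k')$ restricting to the identity on $k_s$ fixes both factors of the compositum, hence is the identity on $k_s'$. For surjectivity, given $\tau \in \mathrm{Gal}(k_s/k)$, I would extend $\tau$ to an automorphism $\tilde{\tau}$ of $\bar{k}$ by exploiting unique $p^n$-th roots: for each $\alpha \in \bar{k}$, choose $n$ so that $\alpha^{p^n} \in k_s$ and define $\tilde{\tau}(\alpha)$ to be the unique $p^n$-th root of $\tau(\alpha^{p^n})$. The same uniqueness, applied to elements of $k'$ whose $p^n$-th powers lie in $k$, forces $\tilde{\tau}$ to fix $k'$ pointwise. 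Since $\tilde{\tau}$ preserves both $k_s$ and $k'$, it preserves $k_s' = k_s \cdot k'$, and its restriction to $k_s'$ is the desired preimage of $\tau$ under $\psi$.

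The main technical obstacle is the identity $k_s' = k_s \cdot k'$; once this is in place and the $p^n$-th root extension $\tilde{\tau}$ is verified to be a well-defined ring homomorphism (which follows from injectivity of Frobenius and the perfectness of $\bar{k}$), the rest is a short diagram chase and the independence of $\tilde{\tau}(\alpha)$ from the choice of $n$ is automatic.
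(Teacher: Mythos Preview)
Your proof is correct and follows essentially the same route as the paper: the paper also identifies $k_s'$ with the compositum $k'\cdot k_s$ (stated there as $k_s'=k'\otimes_k k_s$), derives injectivity from the fact that $k_s'/k_s$ is purely inseparable, and obtains surjectivity by extending a given $\tau\in\mathrm{Gal}(k_s/k)$ and checking via the $p$-power relation $a^{p^n}\in k$ that the extension fixes $k'$. You supply more detail on why the compositum equals $k_s'$ and on the explicit $p^n$-th-root extension to $\bar{k}$, but the skeleton of the argument is the same.
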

\begin{proof}
Firstly, we can take $k_s'$ to be $k'\otimes_k k_s$ by the primitive element theorem from basic algebra. 

The extension $\bar{k}/ k_s$ is purely inseparable hence any intermediate extension of it is again purely inseparable. Therefore $k_s\hookrightarrow k_s'$ is purely inseparable.

We then have that
$Aut(k_s'/k_s)=1$, i.e.,
a field automorphism of $k_s'$ that fixes $k_s$ must be trivial, so $\psi$ is injective. 

On the other hand, any $ \sigma\in Aut(k_s)$ can be extended to an automorphism $ \sigma'\in Aut(k_s')$. If $ \sigma$ fixes $k$, then for any element $a\in k'$, we must have 
\[0= \sigma'(a^m)-a^m= \sigma'(a)^m-a^m=( \sigma'(a)-a)^m,\]
where $m$ is a power of $p$ so that $a^m\in k$. Therefore $ \sigma'$ fixes $k'$, and $ \sigma'=\psi^{-1}( \sigma)$, hence $\psi$ is bijective. 
\end{proof}

\begin{prop}
\label{good}
Let $k$ be a finite field. Let $f:X\to Y$ be a finite, surjective, and purely inseparable morphism of schemes over $k$. Let $F\in D_m^b(X,\overline{\mathbb{Q}}_l)$ be Frobenius semisimple as defined in Def. \ref{gooddef}.(2). Then $f_*F\in D_m^b(Y,\overline{\mathbb{Q}}_l)$ is again Frobenius semisimple. Moreover, the weights of $f_*F$ are the same as the weights of $F$.
\end{prop}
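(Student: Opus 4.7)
The plan is to reduce the statement to a direct comparison of stalks and Galois actions, exploiting that $f$ is a universal homeomorphism.

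First, I would observe that since $f$ is finite, $f_*$ is $t$-exact for the standard $t$-structure, so $\mathcal{H}^i(f_*F)=f_*\mathcal{H}^i(F)$, and the standard proper-base-change/finite-pushforward identity gives, for any geometric point $\bar y \in Y(\bar k)$,
\[
(f_*F)_{\bar y} \;=\; \bigoplus_{\bar x \mapsto \bar y} F_{\bar x},
\]
and similarly for each $\mathcal H^i$. Since $f$ is purely inseparable and surjective, the induced map $X(\bar k)\to Y(\bar k)$ is a bijection, so the sum above has exactly one summand: $\mathcal H^i(f_*F)_{\bar y}=\mathcal H^i(F)_{\bar x}$ as $\overline{\mathbb Q}_l$-vector spaces, where $\bar x$ is the unique geometric point above $\bar y$.

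Next I would identify the Galois module structures. The residue fields $k(y)$ and $k(x)$ are finite extensions of the finite field $k$, hence are themselves finite, hence perfect. The extension $k(x)/k(y)$ sitting inside this picture is purely inseparable, so perfectness forces $k(x)=k(y)$; therefore $\mathrm{Gal}(k(x)_s/k(x))=\mathrm{Gal}(k(y)_s/k(y))$ as abstract Galois groups, and the identification $\mathcal H^i(f_*F)_{\bar y}=\mathcal H^i(F)_{\bar x}$ is automatically equivariant for these coinciding Galois groups (one can also phrase the compatibility through the isomorphism $\psi$ of Lemma \ref{seplemma}, which in this case is literally the identity). In particular the Frobenius $\mathrm{Frob}_{k(y)}$ acts on $\mathcal H^i(f_*F)_{\bar y}$ exactly as $\mathrm{Frob}_{k(x)}$ acts on $\mathcal H^i(F)_{\bar x}$.

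Given this stalkwise isomorphism of graded Galois modules, Frobenius semisimplicity of $F$ immediately transfers to Frobenius semisimplicity of $f_*F$, and the multiset of Frobenius eigenvalues, hence the collection of weights appearing in the stalks, is unchanged. The only mildly delicate point to check carefully is the Galois-equivariance of the stalk identification, and this is precisely where perfectness of finite fields (or Lemma \ref{seplemma}) is doing the work; everything else is formal from the finiteness of $f$. I do not expect any serious obstacle beyond writing this identification cleanly.
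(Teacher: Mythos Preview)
Your argument is correct and follows the same overall strategy as the paper: use exactness of $f_*$ (finiteness) to reduce to cohomology sheaves, then proper base change to reduce to stalks over a single geometric point, and finally identify the Galois actions on source and target. The one genuine difference is in this last step. The paper reduces to the case $X=\mathrm{Spec}\,k(x)$, $Y=\mathrm{Spec}\,k(y)$ and then computes $f_*L$ via the coinduction formula from \cite[II.3.1.(e)]{MilneEt}, invoking Lemma~\ref{seplemma} to see that $\psi:\mathrm{Gal}(k(x)_s/k(x))\to\mathrm{Gal}(k(y)_s/k(y))$ is an isomorphism and hence that the coinduced module is just $L$ with action transported along $\psi^{-1}$. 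You instead observe directly that since $k$ is finite, $k(y)$ is perfect, so the purely inseparable extension $k(x)/k(y)$ is trivial; thus $\psi$ is literally the identity and the coinduction machinery is unnecessary. Your shortcut is entirely valid in the stated setting and yields a cleaner proof; the paper's route through Lemma~\ref{seplemma} is phrased to handle arbitrary purely inseparable residue extensions, but that extra generality is not actually used here.
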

\begin{proof}
We first show that $f_*F$ is Frobenius semisimple. By defintion, we need to show that for every geometric point $\bar{y}\in Y(\bar{k})$, where $y$ is the image closed point of $Y$, we have that the stalk $\mathcal{H}^i(f_*F)_{\bar{y}}$ is a semisimple Galois module. 

We want to reduce to the case where $F$ can be identified with a $\overline{\mathbb{Q}}_l$-sheaf.

Since $f$ is finite, the functor $f_*$ equals the functor $Rf_*$ and is thus exact. 
Therefore, we have
\[\mathcal{H}^i(f_*F)\cong \mathcal{H}^i(f_*\tau_{\ge i}\tau_{\le i} F)\cong f_*\tau_{\ge i}\tau_{\le i} F[i]\cong f_*\mathcal{H}^i(F),\]
isomorphisms of $\overline{\mathbb{Q}}_l$-sheaves placed at degree 0.

By assumption, $F$ is Frobenius semisimple, so $\mathcal{H}^i(F)_{\bar{x}}$ is a semisimple Galois module for every geometric point $\bar{x}\in X(\bar{k})$. 

Therefore we are reduced to show that if $F$ is a $\overline{\mathbb{Q}}_l$-sheaf on $X$ so that $F_{\bar{x}}$ is Galois semisimple for every geometric point $\bar{x}\in X(\bar{k})$, then $f_*G_{\bar{y}}$ is also Galois semisimple for every geometric point $\bar{y}\in Y(\bar{k})$.

For every closed point $y$ of $Y$, let $x$ be the closed point of $X$ so that $f(x)=y$ (recall that $f:X\to Y$ is purely inseparable). Let $x'$ be the scheme theoretic fiber of $f$ over $y$. We have the following commutative diagram,
\[\begin{tikzcd}
x\arrow{r}{i_1} \arrow{rd}{\phi_1} & x'\arrow{r}{i_2} \arrow{d}{\phi} & X\arrow{d}{f}\\
& y\arrow{r}{i_3} & Y
\end{tikzcd}\]

We then have the isomorphisms of $Gal(k(y)_s/k(y))$-modules
$ (f_*F)_{\bar{y}}\cong (i_3^*f_*F)_{\bar{y}}\cong (\phi_*i_2^*F)_{\bar{y}}$ by base change. 
Now from the trivial attaching triangle associated with $i_1: x\hookrightarrow x' \hookleftarrow \emptyset: j$, we have isomorphisms of Galois modules
\[(f_*F)_{\bar{y}}\cong (\phi_*i_2^*F)_{\bar{y}}\cong (\phi_*{i_1}_*i_1^*i_2^*F)_{\bar{y}}\cong ({\phi_1}_*(i_2\circ i_1)^* F)_{\bar{y}}.\]

Since $(i_2\circ i_1)^*F$ is a $\overline{\mathbb{Q}}_l$-sheaf on $x$, and for every $\bar{x}\in x(\bar{k})$, we have that $((i_2\circ i_1)^* F)_{\bar{x}}=F_{\bar{x}}$, the pull back $(i_2\circ i_1)^*F$ is Galois semisimple.

Therefore we are reduced to the case where $X=x=\text{Spec}(k(x))$ and $Y=y=\text{Spec}(k(y))$ are two points over $k$. 

Let $n\in\mathbb{N}$. Let $L$ be a lisse sheaf of $ \mathbb{Z}/l^n  \mathbb{Z}$-modules over $x$, it is identified as a $ \mathbb{Z}/l^n \mathbb{Z}$-module, which we still denote as $L$, provided with a continuous representation of $\xi_L: Gal(k(x)_s/k(x))\to Aut_{ \mathbb{Z}/l^n \mathbb{Z}\text{-mod}}(L)$. By 
\cite[Sec.\;II.3.1.(e)]{MilneEt}, 
the $ \mathbb{Z}/l^n \mathbb{Z}$-module $f_*L$, as a representation of $Gal(k(y)_s/k(y))$, is the coinduction of the $Gal(k(x)_s/k(x))$-module $L$, via the group isomorphism $\psi: Gal(k(x)_s/k(x))\xrightarrow{\sim} Gal(k(y)_s/k(y))=:G$ defined in Lemma \ref{seplemma}:
\[f_*L\;\stackrel{\sim}{\text{\textemdash}}_{G\text{-mod}} Hom_{ \mathbb{Z}[G]}( \mathbb{Z}[Im(\psi)], \;L^{ker(\psi)}),\]
where $L^{ker(\psi)}$ denotes the invariant part of the $ \mathbb{Z}/l^n \mathbb{Z}$-module under the action of $ker(\psi)\subset Gal(k(x)_s/k(x))$. 

As $\psi$ is a group isomorphism, we see that $f_*L\cong L$ as $ \mathbb{Z}/l^n  \mathbb{Z}$-modules and the representation $Gal(k(y)_s/k(y))\to Aut_{ \mathbb{Z}/l^n \mathbb{Z}\text{-mod}}(f_*L)\cong Aut_{ \mathbb{Z}/l^n \mathbb{Z}\text{-mod}}(L)$ factors as
\[ Gal(k(y)_s/k(y))\xrightarrow{\psi^{-1}}Gal(k(x)_s/k(x))\xrightarrow{\xi_L} Aut_{ \mathbb{Z}/l^n \mathbb{Z}\text{-mod}}(L).\]

As $\xi\circ \psi^{-1}$ and $\xi$ has the same image, we see that the proposition "if $L$ is a semisimple $Gal(k(x)_s/k(x))$-module, then $f_*L$ is a semisimple $Gal(k(y)_s/k(y))$-module" is true for every $L$, a lisse sheaf $ \mathbb{Z}/l^n \mathbb{Z}$-modules on $x$, and every $n\in  \mathbb{Z}_{>1}$. 
Taking the projective limit with respect to $n\in  \mathbb{Z}_{>1}$, we see that the same proposition is true if $L$ is a lisse $ \mathbb{Z}_l$-sheaf, as such a sheaf corresponds to the projective limit of the representations of the Galois group on $ \mathbb{Z}/l^n \mathbb{Z}$ for $n\in \mathbb{Z}_{>1}$. 
Tensoring with finite fields extensions of $\mathbb{Q}_l$ and take the direct limit, we see that the same proposition is true if $L$ is a lisse $\overline{\mathbb{Q}}_l$-sheaf. Finally, we can take $L$ to be $G$ above and the proof that $f_*F\in D_m^b(Y,\overline{\mathbb{Q}}_l)$ is Frobenius semisimple is finished.\\

We now show that $f_*F$ has the same weights as $F$:

As $k(x)$ and $k(y)$ are both perfect, the purely inseparable morphism $f:x\to y$ is induced by a field automorphism $f^{\#}:k(y)\xrightarrow{\sim}k(x)$. As Frobenius commutes with field automorphisms, we see that $\psi^{-1}:Gal(k(y)_s/k(y))\to Gal(k(x)_s/k(x))$ maps the Frobenius to Frobenius. 

Alternatively, the fact that they have the same weights also follows from the general theory that $Rg_*$ preserves weights if $g$ is a proper morphism \cite[Sec.\;5.1.14]{BBD}.
\end{proof}

Our final goal is to prove the decomposition theorem and the goodness for $f_*\mathcal{IC}_{X_P(w_{\bullet})}$, where $\mathcal{IC}_{X_P(w_{\bullet})}$ is the shifted intersection complex on a BSDH variety $X_P(w_{\bullet})$, and $f$ is a generalized convolution morphism as defined in \cite[Sec.\;4.5]{dCHL}. 

We recall the definition of generalized convolution morphisms below. For the readers who want to skip the definition, it is useful to know two families of examples of generalized convolutions morphisms. The first family of examples is the $i$-th projection for $i\le r$, $p_i: X_P(w_1,...,w_r)\to X_P(w_1\star...\star w_i)$. The second family of examples is the restriction of the natural morphism $(G/P)^r\to (G/Q)^r$, for some parabolics $P\subset Q$, to $X_P(w_{\bullet})$.\\

To define the generalized convolution morphisms, we need some more notation.

Let $P\subset Q$ be two nonreduced parabolics of $G$ containing a common Borel subgroup $B$. Let $H\subset I\subset R^+$ be such that $R_u(P_{red})=U(I)\supset U(J)=R_u(Q_{red})$.
The Weyl group of the Levi factor of $P_{red}$, $W^I$, is a subgroup of that of $Q_{red}$, $W^H$. As $W_I$ is defined as the set of longest representatives of the elements in $W^I\backslash W/W^I$, we have a natural map induced by the double quotient $w\mapsto w'': W_I\to W_H$.

Let $\gamma: G/P_{red}\to G/Q_{red}$, $w\mapsto w''$ is defined so that for $w\in W_I$, we have an equality of schemes
\[Q_{red}\cdot \gamma (P_{red}w P_{red}/P_{red})= Q_{red} w'' Q_{red}/Q_{red}.\]

Let $\Upsilon: G/P\to G/Q$ be the natural morphism. Then we have the equality
\[Q_{red}\cdot \Upsilon(X_P(w))=X_Q(w'').\]

Let $\theta$ be the data of numbers $i_1,...,i_m=r'$ with $1\le i_1<...<i_m=r'\le r$. Let $w_1,...,w_r\in W_I$ and $w_1'', ...,w_r''\in W_H$. Let $i_0=0$. Define
\[w_{\theta,k}:= w_{i_{k-1}+1}\star ...\star w_{i_k},\;\;\;\;\;\; w_{\theta,k}'':= w_{i_{k-1}+1}''\star ...\star w_{i_k}''.\]

\begin{definition}
\label{generalizedconvolution}
The generalized convolution morphism $f: X_P(w_{\bullet})\to X_Q(w_{\theta,\bullet}'')$ is defined as $f:=\prod_{j=1}^m \Upsilon\circ p_{i_j}$, where $p_{i_j}$ is the $i_j$-th projection of $(G/P)^r\to G/P$ restricted to $X_P(w_{\bullet})$. Equivalently, for any $k$-algebra $A$ and $(a_1,...,a_r)\in X_P(w_{\bullet})(A)$, we define 
\[f(a_1,...,a_r):=(\Upsilon(a_{i_1}),...,\Upsilon(a_{i_m})).\]
\end{definition}

We have the following commutative diagram, where the vertical morphisms are purely inseparable and $f$ equals to the composition of two of the bottom sides:

\begin{equation}\label{cube}
    \begin{tikzcd}[row sep=1.5em, column sep = 1.5em]
    X_{P_{red}}(w_{\bullet}) \arrow[rr] \arrow[dr, swap] \arrow[dd,swap] &&
    X_{P_{red}}(w_{\theta,\bullet}) \arrow[dd] \arrow[dr] \\
    & X_{Q_{red}}(w_{\bullet}'') \arrow[rr] \arrow[dd]&&
    X_{Q_{red}}(w_{\theta,\bullet}'') \arrow[dd] \\
    X_{P}(w_{\bullet}) \arrow[rr] \arrow[dr] && X_{P}(w_{\theta,\bullet}) \arrow[dr] \\
    & X_{Q}(w_{\bullet}'') \arrow[rr] && 
    X_{Q}(w_{\theta,\bullet}'')
    \end{tikzcd}
\end{equation}

We need one more piece of notation to state our final results.
Let $X$ be a variety over a finite field. By $\mathcal{IC}_X\in D^b_m(X, \overline{\mathbb{Q}}_l)$, we mean the intersection complex starting from degree zero, e.g., if $X$ is smooth, then $\mathcal{IC}_X$ is the constant $\overline{\mathbb{Q}}_l$ placed at degree 0. 

In the statement of the theorem below, the requirement $Q_{red}\cdot \overline{X_P(w_i)}=\overline{X_P(w_i)}$ is equivalent to say $w_i$ is of $Q$-type as defined in \cite[Def.\;3.10.3]{dCHL}. It is equivalent to require $\overline{Q_{red}w_iP_{red}}=\overline{P_{red}w_iP_{red}}$. It guarantees that the generalized convolution morphism in the theorem below is surjective.

\begin{thm}[Decomposition Theorem]
\label{decomposition}
Let the base field $k$ be algebraically closed or finite. Let $f: X_P(w_{\bullet})\to X_Q(w_{\theta,\bullet}'')$ be the generalized convolution morphism in Def. \ref{generalizedconvolution}. 

If for each $i=1,...,r$, we have that $Q_{red}\cdot \overline{X_P(w_i)}=\overline{X_P(w_i)}$, then we have the decomposition theorem for $f$: 
\[f_* \mathcal{IC}_{X_P(w_{\bullet})}\cong \bigoplus_{\mathcal{O}} \mathcal{IC}_{\mathcal{O}}\otimes (\bigoplus_{j=0}^{codim \mathcal{O}} \overline{\mathbb{Q}}_l^{m_{\mathcal{O},2j}}(-j)[-2j]),\]
where $\mathcal{O}$ belongs to a finite collection of geometrically integral $Q_{red}$-invariant closed subvarieties. 

Furthermore, Poincare-Verdier duality and Relative Hard Lefschetz theorem imply the following:
 \begin{enumerate}
\item $m_{\mathcal{O}, 2j}=m_{\mathcal{O}, 2codim\mathcal{O}-2j}$;
\item $m_{\mathcal{O}, 2j}\le m_{\mathcal{O}, 2j+2}$,  for $2j< codim \mathcal{O}$.
\end{enumerate}
\end{thm}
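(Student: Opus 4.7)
The plan is to transport the classical decomposition theorem through the commutative cube (\ref{cube}), whose vertical arrows are the universal homeomorphisms induced by $\pi: G/P_{red}\to G/P$ (finite, surjective, and purely inseparable, as in Prop.\;\ref{firstproperties}). The key input is that a finite universal homeomorphism induces an equivalence of small \'etale topoi, so its derived pushforward is an exact equivalence of bounded mixed constructible derived categories $D^b_m(-,\overline{\mathbb{Q}}_l)$ which preserves the self-dual perverse $t$-structure and intersection complexes, commutes with Verdier duality, and (by Prop.\;\ref{good}) preserves Frobenius semisimplicity and weights. In particular, for every closed geometrically integral subvariety $\mathcal{O}_{red}\subset X_{Q_{red}}(w_{\theta,\bullet}'')$, setting $\mathcal{O}:=\pi_Q(\mathcal{O}_{red})$ we obtain $\pi_{Q,*}\,\mathcal{IC}_{\mathcal{O}_{red}}\cong\mathcal{IC}_{\mathcal{O}}$, and similarly $\pi_{P,*}\,\mathcal{IC}_{X_{P_{red}}(w_{\bullet})}\cong\mathcal{IC}_{X_P(w_{\bullet})}$.

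First I would invoke the classical decomposition theorem for generalized convolution morphisms from \cite[Sec.\;4.5]{dCHL} applied to the top morphism $f_{red}: X_{P_{red}}(w_{\bullet}) \to X_{Q_{red}}(w_{\theta,\bullet}'')$ of the cube. The hypothesis $Q_{red}\cdot \overline{X_P(w_i)}=\overline{X_P(w_i)}$ is precisely the $Q$-type condition of \cite[Def.\;3.10.3]{dCHL}, ensuring that $f_{red}$ is surjective. This yields
\[
f_{red,*}\,\mathcal{IC}_{X_{P_{red}}(w_{\bullet})}\;\cong\;\bigoplus_{\mathcal{O}_{red}} \mathcal{IC}_{\mathcal{O}_{red}}\otimes\Bigl(\bigoplus_{j=0}^{\mathrm{codim}\,\mathcal{O}_{red}} \overline{\mathbb{Q}}_l^{m_{\mathcal{O}_{red},2j}}(-j)[-2j]\Bigr),
\]
with $\mathcal{O}_{red}$ ranging over a finite collection of geometrically integral $Q_{red}$-invariant closed subvarieties of $X_{Q_{red}}(w_{\theta,\bullet}'')$.

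Next, commutativity of the cube gives $f\circ \pi_P = \pi_Q\circ f_{red}$, so applying $\pi_{Q,*}$ to the decomposition above and combining with the identification $\mathcal{IC}_{X_P(w_{\bullet})}\cong\pi_{P,*}\,\mathcal{IC}_{X_{P_{red}}(w_{\bullet})}$ produces
\[
f_{*}\,\mathcal{IC}_{X_P(w_{\bullet})}\;\cong\;\pi_{Q,*}\,f_{red,*}\,\mathcal{IC}_{X_{P_{red}}(w_{\bullet})}\;\cong\;\bigoplus_{\mathcal{O}}\mathcal{IC}_{\mathcal{O}}\otimes\Bigl(\bigoplus_{j=0}^{\mathrm{codim}\,\mathcal{O}}\overline{\mathbb{Q}}_l^{m_{\mathcal{O},2j}}(-j)[-2j]\Bigr),
\]
with $m_{\mathcal{O},2j}:=m_{\mathcal{O}_{red},2j}$, since a universal homeomorphism preserves codimensions. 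The $Q_{red}$-invariance of each $\mathcal{O}$ is automatic from that of $\mathcal{O}_{red}$ because $\pi_Q$ is $Q_{red}$-equivariant. The symmetries (1) and (2) transfer directly from their classical counterparts, which are the usual consequences of Poincar\'e--Verdier duality and Relative Hard Lefschetz for the proper morphism $f_{red}$; these carry over to $f$ because $\pi_{Q,*}$ commutes with Verdier duality and the relative ample class on $X_Q(w_{\theta,\bullet}'')$ pulls back to a relative ample class on $X_{Q_{red}}(w_{\theta,\bullet}'')$.

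The main obstacle is the opening step: making precise that a finite universal homeomorphism induces an equivalence of constructible derived categories that carries the middle perversity and its simple objects to themselves, so that the pushforward of the source IC and of each summand's IC is indeed the target IC. Once this is in hand, the theorem is essentially a one-line reduction to the reduced-parabolic case already treated in \cite{dCHL}, consistent with the paper's ``topology is nice'' slogan: for decomposition-type statements, nonreduced stabilizers cost only bookkeeping.
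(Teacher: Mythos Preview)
Your proposal is correct and follows essentially the same route as the paper: reduce to the reduced-parabolic case via the cube (\ref{cube}) and the known result \cite[Thm.\;2.2.7]{dCHL}, then push forward through the vertical universal homeomorphism $\pi$. The only difference is packaging of the ``main obstacle'' you flag: rather than invoking a general equivalence of \'etale topoi, the paper verifies $\pi_*\mathcal{IC}_{\mathcal{O}'}\cong\mathcal{IC}_{\mathcal{O}}$ directly by noting that $\pi_*$ is exact (finite morphism), hence commutes with the truncations $\tau^{\le i}$ and with $j_*$, and that $\pi_*(\overline{\mathbb{Q}}_l)_U=(\overline{\mathbb{Q}}_l)_{\pi(U)}$ since a universal homeomorphism induces an equivalence of \'etale covers---then the description of intermediate extension as iterated $j_*$'s and truncations yields the claim.
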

\begin{proof}
This is true when $P$ and $Q$ are reduced by \cite[Thm.\;2.2.7]{dCHL}.

Consider the northwest-southeast diagonal slice of the cube diagram (\ref{cube}):
\begin{equation}\label{last}
\begin{tikzcd}
X_{P_{red}}(w_{\bullet}) \arrow{r}{f} \arrow[swap]{d}{\pi} & X_{Q_{red}}(w_{\theta,\bullet}'') \arrow{d}{\pi} \\
X_P(w_{\bullet}) \arrow{r}{f}& X_Q(w_{\theta,\bullet}''),
\end{tikzcd}
\end{equation}
where we abuse the language a bit for the arrows denoted $f$ and $\pi$. 
Both vertical arrows denoted by $\pi$ are finite, surjective, and purely inseparable. 

Let $\mathcal{O}'$ be a stratum that appears in the decomposition theorem for $f:X_{P_{red}}(w_{\bullet})\to X_{Q_{red}}(w_{\theta,\bullet}'')$. 

Let $\mathcal{O}:= f(\mathcal{O}')$. 

Let $j:U\hookrightarrow \mathcal{O}'$ be an open dense smooth subscheme so that $\mathcal{IC}_{\mathcal{O}'}= \mathcal{IC}_{\mathcal{O}'}((\overline{\mathbb{Q}}_l)_U)$. 

Let $j$ also denote the open embedding $j:\pi(U)\hookrightarrow \mathcal{O}$ by abuse of language. We then have that $\pi_*j_*=j_*\pi_*$. Moreover, as $\pi$ is finite, the pushforward $\pi_*$ is exact, and we have that $\pi_*$ commutes with the truncation functors $\tau^{\le i}$, $i\in \mathbb{Z}$, for the standard t-structure. Therefore, using the description of intermediate extension as iterated $j_*$'s and $\tau^{\le i}$'s we have the isomorphism:
\[\pi_*\mathcal{IC}_{\mathcal{O}'}\cong \mathcal{IC}_{\mathcal{O}}(\pi_*(\overline{\mathbb{Q}}_l)_U).\]

Notice that this isomorphism is also given by (17) of \cite[Lem.\;2.4.1]{toricfinite}.
As $\pi$ is a universal homeomorphism, we have that $\pi^*$ induces an equivalence of the category of \'{e}tale covers of $U$
and that of $\pi(U)$, hence $\pi_*(\overline{\mathbb{Q}}_l)_U=(\overline{\mathbb{Q}}_l)_{\pi(U)}$ and 
\begin{equation}
\label{equality}
    \pi_*\mathcal{IC}_{\mathcal{O}'}\cong \mathcal{IC}_{\mathcal{O}}.
\end{equation}

Therefore we have the decomposition
\begin{align*}
    f_* \mathcal{IC}_{X_P(w_{\bullet})}& \cong f_*\pi_* \mathcal{IC}_{X_{P_{red}}(w_{\bullet})}\cong \pi_*f_*\mathcal{IC}_{X_{P_{red}}(w_{\bullet})}\\
    &\cong \bigoplus_{\mathcal{O'}} \pi_*\mathcal{IC}_{\mathcal{O'}}\otimes (\bigoplus_{j=0}^{codim \mathcal{O'}} \overline{\mathbb{Q}}_l^{m_{\mathcal{O'},2j}}(-j)[-2j])\\
    &\cong \bigoplus_{\mathcal{O}} \mathcal{IC}_{\mathcal{O}}\otimes (\bigoplus_{j=0}^{codim \mathcal{O}} \overline{\mathbb{Q}}_l^{m_{\mathcal{O},2j}}(-j)[-2j]).
\end{align*}

From this we also see that the (in-)equalities (1) and (2) follow from the corresponding (in-)equalities in the case where $P$ and $Q$ are reduced.
\end{proof}

\begin{thm}[Goodness]
\label{goodthm}
Let the base field be finite. Let $f:X_P(w_{\bullet})\to X_Q(w_{\theta,\bullet}'')$ be a generalized convolution morphism as above.
Then both $\mathcal{IC}_{X_P(w_{\bullet})}$ and $f_*\mathcal{IC}_{X_P(w_{\bullet})}$ are good as in Def.\;\ref{gooddef}.
\end{thm}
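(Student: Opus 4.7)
The plan is to reduce both assertions to the already-treated reduced case $(P_{red},Q_{red})$ from \cite[Thm.\;2.2.7]{dCHL}, via the universal homeomorphism $\pi$ in the square (\ref{last}). The two technical inputs I will use are the identity (\ref{equality}) from the proof of Theorem \ref{decomposition}, which gives $\pi_*\mathcal{IC}_{Z'}\cong \mathcal{IC}_{\pi(Z')}$ for the geometrically integral closed subvarieties appearing in the decomposition theorem, together with the analysis of $\pi_*$ on stalks already carried out inside the proof of Prop.\;\ref{good}.

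\textbf{Step 1: the sheaf $\mathcal{IC}_{X_P(w_\bullet)}$.} Applying (\ref{equality}) to $Z'=X_{P_{red}}(w_\bullet)$ gives $\mathcal{IC}_{X_P(w_\bullet)}\cong \pi_*\mathcal{IC}_{X_{P_{red}}(w_\bullet)}$. Semisimplicity (1) is then automatic since both sides are simple $\mathcal{IC}$-sheaves. Frobenius semisimplicity (2) and preservation of weights — hence very-pureness of weight zero (4) — follow directly from Prop.\;\ref{good}. For evenness (3) and Tate-ness (5), which are statements about stalks, I will use the fact that any closed point $y\in X_P(w_\bullet)$ has a unique lift $x\in X_{P_{red}}(w_\bullet)$ because $\pi$ is a universal homeomorphism; the stalk computation inside the proof of Prop.\;\ref{good} then yields $\mathcal{H}^i(\pi_*F)_{\bar y}\cong \mathcal{H}^i(F)_{\bar x}$ as $\overline{\mathbb{Q}}_l$-modules, with the Galois action transferred along the isomorphism $\psi$ of Lemma \ref{seplemma}. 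Consequently, the degrees in which stalks vanish and the decomposition as a direct sum of Tate twists are both inherited from the classical case.

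\textbf{Step 2: the sheaf $f_*\mathcal{IC}_{X_P(w_\bullet)}$.} The commutativity of (\ref{last}) together with the fact that $\pi_*$ commutes with $f_*$ (both are pushforwards, and the outer $\pi_*$ is along a finite map) will give
\[
f_*\mathcal{IC}_{X_P(w_\bullet)}\cong f_*\pi_*\mathcal{IC}_{X_{P_{red}}(w_\bullet)}\cong \pi_* f_*\mathcal{IC}_{X_{P_{red}}(w_\bullet)}.
\]
By \cite[Thm.\;2.2.7]{dCHL}, the inner $f_*\mathcal{IC}_{X_{P_{red}}(w_\bullet)}$ is good, so the argument of Step 1 applied now to the outer $\pi_*$ will transport all five properties across. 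Alternatively, I can read off goodness from the explicit decomposition established in Theorem \ref{decomposition}: it writes $f_*\mathcal{IC}_{X_P(w_\bullet)}$ as a finite direct sum of shifted $\mathcal{IC}_{\mathcal{O}}$'s twisted by $\overline{\mathbb{Q}}_l(-j)[-2j]$, each summand is good by Step 1 applied to the stratum $\mathcal{O}$, and goodness is trivially preserved under finite direct sums and Tate twists.

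\textbf{Main obstacle.} The only step that is not a pure invocation of \cite{dCHL}, Prop.\;\ref{good}, or Theorem \ref{decomposition} is the verification that $\pi_*$ preserves evenness and being Tate. This however is essentially built into the proof of Prop.\;\ref{good}: the identification of $\pi_*L$ with $L$ as $\overline{\mathbb{Q}}_l$-modules (with Galois action transported via $\psi$) immediately preserves the vanishing of odd-degree stalks and the Tate-module structure, so the difficulty will be mainly bookkeeping rather than any new geometric idea.
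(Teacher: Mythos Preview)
Your proposal is correct and follows essentially the same route as the paper: reduce to the reduced case via the universal homeomorphism $\pi$, invoke the identity (\ref{equality}) together with the commutativity of (\ref{last}), and then verify that $\pi_*$ preserves each of the five properties in Def.\;\ref{gooddef} using Prop.\;\ref{good} and the stalk identification established there. The only cosmetic discrepancy is that the paper cites \cite[Th.\;2.2.1--2.2.2]{dCHL} rather than \cite[Thm.\;2.2.7]{dCHL} for the goodness of $\mathcal{IC}_{X_{P_{red}}(w_\bullet)}$ and $f_*\mathcal{IC}_{X_{P_{red}}(w_\bullet)}$, and it packages the argument by first observing abstractly that $\pi_*$ preserves goodness before applying it twice.
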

\begin{proof}
In the proof of Thm. \ref{decomposition}, it is shown that the pushforward of an intersection complex by $\pi_*$ is still an intersection complex. Therefore, the pushforward of a semisimple complex by $\pi_*$ is still semisimple. 

It is easy to see that $\pi_*$ preserves evenness.
By Prop. \ref{good}, we have that $\pi_*$ preserves Frobenius semisimplicity and weights. 
As $\pi$ is a universal homeomorphism and preserves weights, we also have that $\pi_*$ preserves Tateness.

Therefore the pushforward $\pi_*$ preserves goodness.

By  \cite[Th.\;2.2.1]{dCHL}, we have that $\mathcal{IC}_{X_{P_{red}}(w_{\bullet})}$ is good. 
Apply the equation (\ref{equality}) above to $\mathcal{O}'=X_{P_{red}}(w_{\bullet})$, we have that
\[\pi_* \mathcal{IC}_{X_{P_{red}}}(w_{\bullet})\cong \mathcal{IC}_{\pi(X_{P_{red}}(w_{\bullet}))}\cong \mathcal{IC}_{X_P(w_{\bullet})}\]
is good.

By \cite[Th.\;2.2.2]{dCHL}, we have that $f_*\mathcal{IC}_{X_{P_{red}}}(w_{\bullet})$ is good. 
Therefore 
\[f_*\mathcal{IC}_{X_P(w_{\bullet})}\cong  f_*\pi_*\mathcal{IC}_{X_{P_{red}}(w_{\bullet})}=\pi_*f_*\mathcal{IC}_{X_{P_{red}}(w_{\bullet})}\]
is good.
\end{proof}

\section{Appendix: $k$-Functors, fppf-Sheaves, and Monomorphism of Schemes}
\label{appendix}
A $k$-functor is a functor from the category of $k$-algebras to the category of sets.

By taking the functor of points, we have an equivalence between the category of schemes and a full subcategory of $k$-functors that we describe below.

A $k$-functor $X$ is called local if it is a sheaf for the Zariski site structure on the category of $k$-algebras, i.e., for every $k$-algebra $A$ and every finite set $f_1,...,f_n\in A$ so that $\sum_i f_i A=A$, the following sequence is exact:
\[ X(A)\to \prod_i X(A_{f_i})\rightrightarrows \prod_{i,j} X(A_{f_i f_j}).\]

There is a natural way to define an open sub-$k$-functor of a $k$-functor \cite[Sec. I.1.7]{Jantzen}. 

By taking the functor of points, we have an equivalence between the category of schemes and the category of local $k$-functors admitting open coverings of open subfunctors which are functor of points of affine schemes \cite[Sec. I.1.11]{Jantzen}. 
From now on we do not distinguish a scheme and its associated functor of points.

There is an fppf Grothendieck topology on the category of $k$-algebras \cite[Sec. I.5.2]{Jantzen}. 

A $k$-space $X$ is a $k$-functor that is also an fppf sheaf, i.e., for every $k$-algebra $A$ and an fppf open covering  $A_1,...,A_n$ of $A$, the following sequence is exact:
\[X(A) \to \prod_i X(A_i) \rightrightarrows \prod_{i,j} X(A_i\otimes_A A_j).\]
We use the terms $k$-space and big fppf sheaf interchangeably.

Every $k$-functor $X$ has a unique fppf sheafification, which is called the associated $k$-space $\tilde{X}$. The functor $i:X\mapsto \tilde{X}$ is the left adjoint to the inclusion functor $\{k$-spaces$\}\hookrightarrow \{k-$functors$\}$ \cite[Sec. I.5.4]{Jantzen}.

Every scheme is a $k$-space \cite[Sec. III.1.1.3]{demazure1970groupes}. Every $k$-space is a local $k$-functor. Not all $k$-spaces are schemes, e.g., all algebraic spaces are $k$-spaces.

Given a morphism of functors $f:X\to Y$ between two $k$-spaces, the image $k$-functor of $f$ is  $A\mapsto f(A)(X(A))$. The image $k$-space $Im(f)$ of $f$ is the associated $k$-space of the image $k$-functor of $f$. The image $k$-space has the universal property that it is the smallest subfunctor of $Y$, which is also a $k$-space, that $f$ factors through.
In \cite[Sec. I.5.4.4]{Jantzen}, it is shown that 
\[Im(f)(A)=\bigcup_B f(B)(X(B))\cap Y(A),\]
where $B$ is taken over all fppf-$A$-algebras and we have used the inclusions $f(B)(X(B))\subset Y(B)\supset Y(A)$.

\begin{lemma}
Let $f:X\to Y$ be a flat morphism of finite presentation between schemes. If $f$ maps the space underlying $X$ surjectively onto the space underlying the scheme theoretic image $f(X)$. Then the image $k$-space is representable by the scheme $f(X)$.
\end{lemma}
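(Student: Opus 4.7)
The plan is to show that the image $k$-space $Im(f)$ and the scheme-theoretic image $f(X)$ coincide as subfunctors of $Y$. Since $f(X)$ is a closed subscheme of $Y$, hence an fppf sheaf, and since $f$ evidently factors through it, the universal property of fppf sheafification then identifies $Im(f)$ with $f(X)$.

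First I would establish the inclusion $Im(f)\subseteq f(X)$. Take $a\in Im(f)(A)\subseteq Y(A)$. By the explicit description of $Im(f)$ recalled from \cite[Sec.\;I.5.4.4]{Jantzen}, there exist an fppf $A$-algebra $B$ and a $B$-point $b:\text{Spec}(B)\to X$ with $f\circ b=a|_B$. Since $f$ factors through the closed immersion $f(X)\hookrightarrow Y$, we get $a|_B\in f(X)(B)$. Letting $\mathcal{I}\subseteq\mathcal{O}_Y$ be the ideal sheaf cutting out $f(X)$ and $J:=a^{*}\mathcal{I}\subseteq A$, the condition $a|_B\in f(X)(B)$ is equivalent to $J\otimes_A B = JB = 0$; faithful flatness of $A\to B$ then forces $J=0$, so $a\in f(X)(A)$.

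For the reverse inclusion $f(X)(A)\subseteq Im(f)(A)$, take $a:\text{Spec}(A)\to f(X)\hookrightarrow Y$ and form the base change $X_A:=X\times_Y\text{Spec}(A)\to\text{Spec}(A)$. Flatness and finite presentation are preserved by base change. For topological surjectivity: every $p\in\text{Spec}(A)$ has image $a(p)$ in the underlying set of $f(X)$, which by hypothesis agrees with the set-theoretic image of $f$, so some $x\in X$ satisfies $f(x)=a(p)$, yielding a point of $X_A$ over $p$. Hence $X_A\to\text{Spec}(A)$ is fppf. Since $f$ is quasi-compact, $X_A$ is quasi-compact; cover it by finitely many affines $\text{Spec}(B_i)$ and set $B:=\prod_i B_i$. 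Then $\text{Spec}(B)\to X_A\to\text{Spec}(A)$ is an fppf cover, and the composition $\text{Spec}(B)\to X_A\to X$ is a $B$-point of $X$ whose image in $Y$ is $a|_B$, witnessing $a\in Im(f)(A)$.

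The main delicate point is the first direction: it rests on fppf descent for the closed immersion $f(X)\hookrightarrow Y$, i.e., on the fact that "factoring through a closed subscheme" is fppf-local on the base. This is a standard consequence of faithful flatness once phrased via the defining ideal, but it is the conceptual reason the argument works. The surjectivity hypothesis on the underlying spaces is only used in the second direction, where it guarantees that the base change $X_A\to\text{Spec}(A)$ is surjective and so truly is an fppf cover; without it, one would only be able to recover as an image $k$-space a proper subfunctor of $f(X)$.
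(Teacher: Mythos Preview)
Your proof is correct and follows essentially the same approach as the paper: both show that every $A$-point of $f(X)$ lifts to a $B$-point of $X$ for some fppf $A$-algebra $B$, using that the base change $X_A\to\text{Spec}(A)$ is itself fppf. Your version is considerably more detailed than the paper's terse three-line argument---you explicitly construct the fppf cover by passing to a finite affine cover of the quasi-compact $X_A$, and you also spell out the containment $Im(f)\subseteq f(X)$, which the paper omits entirely (presumably as immediate from $f(X)$ being a scheme, hence an fppf sheaf, through which $f$ factors). One small remark: your opening paragraph's sentence ``the universal property of fppf sheafification then identifies $Im(f)$ with $f(X)$'' overstates what the universal property gives---it yields only the inclusion $Im(f)\subseteq f(X)$, not equality---but your subsequent detailed argument correctly supplies both directions; also, your closing assessment that the first direction is ``the main delicate point'' is arguably backwards, since that inclusion is automatic once one knows schemes are fppf sheaves, whereas the second direction genuinely uses the surjectivity and flatness hypotheses.
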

\begin{proof}
Without loss of generality, we can assume $f$ is surjective. 
From the description above, we need to show that every $A$-point of $f(X)$ is in some $f(B)(X(B))$ for some fppf-$A$-algebra $B$.
For this we can assume both $X$ and $Y$ are affine. 
Then the lemma follows from the properties of $f$ that we impose.
\end{proof}

\begin{lemma}\label{image}
Let $f:X\to Y$ be a morphism between two schemes. Let $Im(f)$ denote the image $k$-space of $f$. Then $Im(f)$ is a subfunctor of $Y$. If $Im(f)$ is also a scheme, then the inclusion $i:Im(f)\to Y$ is a monomorphism of schemes that $f$ factors through. 
Furthermore, we have that $Im(f)$ is the smallest scheme with such a property, i.e., given any monomorphism between schemes $b: Z\to Y$ so that $f:X\to Y$ factors as $X\xrightarrow{a} Z\xrightarrow{b} Y$ for some morphism $a$, we can factor $X\xrightarrow{a}Z$ further as $X\xrightarrow{f_1} Im(f)\xrightarrow{c} Z$ with $c$ a monomorphism between schemes so that $c\circ f_1=a$ and $b\circ c=i$:
\[\begin{tikzcd}
Z \arrow{r}{b}& Y\\
X \arrow{r}{f_1} \arrow{u}{a} &
Im(f)\arrow{u}{i} \arrow{ul}{c}
\end{tikzcd}\]
\end{lemma}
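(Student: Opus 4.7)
The plan is to prove the three assertions in order, using the universal property of the image $k$-space already quoted from \cite[Sec.\;I.5.4.4]{Jantzen}.

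For the first assertion, that $Im(f)$ is a subfunctor of $Y$: the image $k$-functor $A\mapsto f(A)(X(A))$ is tautologically a subfunctor of $Y$, and $Y$, being a scheme, is already a $k$-space (big fppf sheaf). Since sheafification is left exact and in particular preserves monomorphisms of presheaves, the associated $k$-space $Im(f)$ remains a subfunctor of $\widetilde{Y}=Y$. This is the content of the quoted description $Im(f)(A)=\bigcup_B f(B)(X(B))\cap Y(A)$: each stage in this union sits inside $Y(A)$.

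For the second assertion, recall that a morphism of schemes is a monomorphism in the category of schemes if and only if its functor of points is injective on $A$-points for every $k$-algebra $A$; equivalently, if and only if the diagonal is an isomorphism. Since $Im(f)\hookrightarrow Y$ is, by the first assertion, a monomorphism of functors, and $Im(f)$ is assumed representable, the morphism $i:Im(f)\to Y$ is automatically a monomorphism of schemes. The fact that $f$ factors through $i$ follows from the universal property of sheafification applied to the factorization of $f$ through the image $k$-functor.

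For the minimality statement, suppose $b:Z\to Y$ is a monomorphism of schemes with $f=b\circ a$. Since $b$ is a monomorphism of schemes, $b(A):Z(A)\hookrightarrow Y(A)$ is injective for every $k$-algebra $A$, and we can identify $Z$ with the subfunctor $b(Z)$ of $Y$. Since $Z$ is a scheme, hence a $k$-space, we have exhibited $Z$ as a subfunctor of $Y$ that is a $k$-space and through which $f$ factors. By the universal property of $Im(f)$ as the smallest such subfunctor of $Y$, the inclusion of subfunctors $Im(f)\hookrightarrow Y$ factors through $b(Z)\hookrightarrow Y$, yielding a morphism of functors $c:Im(f)\to Z$ with $b\circ c=i$. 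This $c$ is a monomorphism of functors (as an inclusion of subfunctors of $Y$ via $b$), hence by the same argument as in the second assertion, $c$ is a monomorphism of schemes. Finally, the equality $c\circ f_1=a$ follows after postcomposing with the monomorphism $b$: one checks $b\circ c\circ f_1=i\circ f_1=f=b\circ a$, and since $b$ is a monomorphism we may cancel it to obtain $c\circ f_1=a$.

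The only potentially delicate point is the identification of the scheme $Z$, via the monomorphism $b$, with its image subfunctor in $Y$; this identification is what allows us to invoke the universal property of $Im(f)$ as the smallest $k$-space subfunctor of $Y$ through which $f$ factors. Once this identification is in hand, everything else is a formal consequence of the universal property of sheafification and the characterization of scheme-theoretic monomorphisms via functors of points.
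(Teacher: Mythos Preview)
Your proof is correct and follows essentially the same approach as the paper: both argue that $Im(f)$ is a subfunctor of $Y$ (you via sheafification preserving monomorphisms, the paper via a reference to \cite[Sec.\;I.5.3.2]{Jantzen}), identify monomorphisms of schemes with injectivity on $A$-points, identify $Z$ with a sub-$k$-space of $Y$ via the mono $b$, invoke the universal property of $Im(f)$ to obtain $c$, and cancel $b$ to conclude $c\circ f_1=a$. Your exposition is slightly more detailed in justifying why sheafification keeps $Im(f)$ inside $Y$, but the substance is identical.
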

\begin{proof}
The image $k$-functor of $f$ always satisfies the assumption  \cite[Sec. I.5.3.2]{Jantzen}.
 Therefore the image $k$-space of $f$, $Im(f)$, is a subfunctor of the scheme $Y$ that is also a $k$-space. Therefore we have that $Im(f)\xrightarrow{i} Y$ is a monomorphism between schemes. 

Given $X\xrightarrow{a}Z\xrightarrow{b} Y$ as above. As $Z\xrightarrow{b} Y$ is a mono between schemes, we have that 
\[-\circ b: Hom(\text{Spec}(A),Z)\hookrightarrow Hom(\text{Spec}(A), Y)\]
is an injection for any $k$-algebra $A$, i.e., $b(A):Z(A)\hookrightarrow Y(A)$ gives an inclusion of sets. Therefore, up to a unique isomorphism, we have that $b:Z\to Y$ is an inclusion of $k$-spaces that $f$ factors through. By the universal property of image $k$-space, we have an inclusion of $k$-spaces $c:Im(f)\to Z$ so that $b\circ c=i$. As both $Im(f)$ and $Z$ are schemes, we have that $c$ is a monomorphism of schemes. 
Finally, because that $b\circ (c\circ f_1)=b\circ a=f$ and that $b$ is a mono, we have that $a=c\circ f_1$.
\end{proof}

\begin{cor}
If $f:X\to Y$ is a monomorphism between schemes. Then $f$ is an inclusion of $k$-functors and the image $k$-space $Im(f)$ of $f$ is $X$.
\end{cor}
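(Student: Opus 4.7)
The plan is to use Lemma \ref{image} applied to a trivial self-factorization of $f$. First, I verify that $f$ is an inclusion of $k$-functors: by the defining property of a monomorphism of schemes, $\mathrm{Hom}(Z,X) \to \mathrm{Hom}(Z,Y)$ is injective for every scheme $Z$. Specializing $Z = \mathrm{Spec}(A)$ for each $k$-algebra $A$ produces an injection $f(A): X(A) \hookrightarrow Y(A)$, so $X$ sits inside $Y$ as a sub-$k$-functor via $f$.

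For the identification of $Im(f)$ with $X$, I would apply Lemma \ref{image} to the (trivial) factorization $X \xrightarrow{\mathrm{id}_X} X \xrightarrow{f} Y$, setting $Z := X$, $a := \mathrm{id}_X$, and $b := f$; the morphism $b$ is a monomorphism between schemes by hypothesis, so the hypotheses of the lemma are met. The lemma then delivers a morphism $f_1: X \to Im(f)$ together with a monomorphism of schemes $c: Im(f) \to X$ such that $c \circ f_1 = \mathrm{id}_X$ and $f \circ c = i$, where $i: Im(f) \hookrightarrow Y$ denotes the canonical embedding.

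The key step is to upgrade $c$ to an isomorphism. The equality $c \circ f_1 = \mathrm{id}_X$ exhibits $f_1$ as a right inverse of $c$, so $c$ is a split epimorphism. Computing
\[
c \circ (f_1 \circ c) = (c \circ f_1) \circ c = c = c \circ \mathrm{id}_{Im(f)}
\]
and cancelling the monomorphism $c$ on the left yields $f_1 \circ c = \mathrm{id}_{Im(f)}$, so $c$ and $f_1$ are mutually inverse. Hence $Im(f) \cong X$, and the relation $f \circ c = i$ ensures that this identification respects the embeddings into $Y$. No serious obstacle appears here; the corollary is a formal consequence of Lemma \ref{image} together with the elementary fact that a morphism which is simultaneously a monomorphism and a split epimorphism is an isomorphism.
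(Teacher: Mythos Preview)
There is a gap in your appeal to Lemma \ref{image}. The ``Furthermore'' clause of that lemma, which is what furnishes the monomorphism of schemes $c: Im(f)\to Z$, is stated under the hypothesis that $Im(f)$ is a scheme. You have not verified this hypothesis before invoking the lemma; indeed, the conclusion you are after, $Im(f)\cong X$, is precisely what would tell you $Im(f)$ is a scheme. So as written the argument is circular.

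The direct route, which is presumably why the paper states the corollary without proof, is already implicit in your first paragraph. Once you know that each $f(A): X(A)\hookrightarrow Y(A)$ is injective, the image $k$-functor $A\mapsto f(A)(X(A))$ is, via $f$, naturally isomorphic to $A\mapsto X(A)$, i.e., to $X$ itself regarded as a subfunctor of $Y$. Since $X$ is a scheme it is already a $k$-space, so the fppf sheafification of the image $k$-functor is again $X$; hence $Im(f)=X$. No factorization or split-epimorphism argument is needed.

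If you prefer to salvage your approach, observe that the proof of Lemma \ref{image} actually produces $c: Im(f)\to Z$ as an inclusion of $k$-spaces using only the universal property of the image $k$-space; the hypothesis that $Im(f)$ is a scheme is only used to call $c$ a monomorphism \emph{of schemes}. Working at the level of $k$-spaces, your mono-plus-split-epi argument then gives $Im(f)\cong X$ as $k$-spaces, whence $Im(f)$ is a scheme a posteriori. But this detour buys nothing over the one-line argument above.
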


\begin{cor}
The image $k$-space is in general not a scheme. 
\end{cor}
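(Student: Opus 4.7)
The plan is to exhibit a single morphism of schemes whose image $k$-space fails to be representable, leveraging the non-flatness phenomena already established in this paper. The candidate is the composition
\[ f : X_{P_{red}}(w_\bullet) \hookrightarrow (G/P_{red})^r \xrightarrow{\pi^r} (G/P)^r, \]
where $P$ is a nonreduced parabolic and $w_\bullet$ is chosen so that the natural map $X_{P_{red}}(w_\bullet) \to X_P(w_\bullet)$ is not flat. Explicit instances are furnished by the Schubert examples in Rmk.\;\ref{notflatincidence} (taking $r = 1$) and by the two-step BSDH example built on Ex.\;\ref{nonreducedfiber}.

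Next I would argue by contradiction: suppose $Im(f)$ is representable by a scheme. By Prop.\;\ref{chara}, the scheme-theoretic image $X_P(w_\bullet) \hookrightarrow (G/P)^r$ is a closed subscheme through which $f$ factors, so Lemma \ref{image} produces a monomorphism of schemes $Im(f) \hookrightarrow X_P(w_\bullet)$. Conversely, by Def.\;\ref{BSDHfun} and the proof of Prop.\;\ref{chara}, the image $k$-functor of $f$ is precisely $\mathcal{X}_P(w_\bullet)$, so $Im(f)$ is the fppf sheafification $\underline{\mathcal{X}_P}(w_\bullet)$ of $\mathcal{X}_P(w_\bullet)$. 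The universal property of fppf sheafification combined with Prop.\;\ref{chara} then forces $Im(f) = X_P(w_\bullet)$ as schemes.

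To derive the contradiction I would invoke the explicit formula from Sec.\;I.5.4.4 of Jantzen,
\[ Im(f)(A) = \bigcup_B \pi^r(B)\bigl(X_{P_{red}}(w_\bullet)(B)\bigr) \cap (G/P)^r(A), \]
where $B$ ranges over fppf $A$-algebras, and compare it to $X_P(w_\bullet)(A)$. Exactly as explained in Rmk.\;\ref{rmkflat1}, the failure of flatness of $\pi^r$ exhibits an $A$-point of $X_P(w_\bullet)$ that does not fppf-locally lift to $X_{P_{red}}(w_\bullet)$, yielding the strict inclusion $Im(f)(A) \subsetneq X_P(w_\bullet)(A)$ and contradicting $Im(f) = X_P(w_\bullet)$.

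The main obstacle is this last step, namely converting non-flatness of $\pi^r$ into the existence of a concrete non-lifting $A$-point; but this is already carried out in Rmk.\;\ref{rmkflat1}, so in the proof I would simply cite that remark. Everything else is a formal manipulation of image $k$-functors, image $k$-spaces, and the minimality of the scheme-theoretic image given by Lemma \ref{image} and Prop.\;\ref{chara}.
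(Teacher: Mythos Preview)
Your approach is quite different from the paper's, and the step you yourself flag as ``the main obstacle'' is a genuine gap. The paper does not use the BSDH machinery for this corollary at all: it exhibits the elementary map $f:\mathbb{A}^2\to\mathbb{A}^2$, $(x,y)\mapsto(x,xy)$, and invokes Lemma~\ref{image}, which says that if $Im(f)$ were a scheme it would be the \emph{smallest} monomorphism into $\mathbb{A}^2$ through which $f$ factors. For this particular $f$ no such smallest monomorphism exists (the paper cites a MathOverflow argument of R.\ van Dobben de Bruyn). That is the entire proof --- two lines, no flag varieties.

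Your route through Rmk.~\ref{rmkflat1} is thematically attractive but does not close. That remark is heuristic: it says that because $X_{P_{red}}(w_\bullet)\to X_P(w_\bullet)$ is not flat, ``the $A$-points of $X_P(w_\bullet)$ in general contain some points in $\pi^r(B)X_{P_{red}}(B)$ where $B$ is not a flat $A$-algebra''. But non-flatness only says that $\pi^r$ is not \emph{itself} an fppf cover; it does not show that some $A$-point fails to lift after \emph{every} fppf base change. Being a surjection of fppf sheaves does not imply flatness (e.g.\ add a redundant disjoint component to an fppf cover), so the implication you need does not come for free, and Rmk.~\ref{rmkflat1} never supplies a concrete non-lifting $A$-point. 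Indeed, note that $\pi^r:(G/P_{red})^r\to(G/P)^r$ \emph{is} faithfully flat, so its restriction to the full preimage $(\pi^r)^{-1}(X_P(w_\bullet))$ is an fppf surjection onto $X_P(w_\bullet)$; what you would really need is that the strictly smaller subscheme $X_{P_{red}}(w_\bullet)$ already fails to hit some point fppf-locally, and nothing in the paper establishes that. A secondary soft spot is your step~5: Prop.~\ref{chara} gives minimality of $X_P(w_\bullet)$ only among \emph{closed} subschemes, so concluding $Im(f)=X_P(w_\bullet)$ also requires arguing that the monomorphism $Im(f)\hookrightarrow X_P(w_\bullet)$ is proper.
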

\begin{proof}
In general, there does not exist a smallest monomorphism that a morphism of schemes $f:X\to Y$. A counter example is the blow down map restricted to one chart $f:\mathbb{A}^2\to \mathbb{A}^2$, $f(x,y):=(x,xy)$: see the math overflow post by R. van Dobben de Bruyn at mathoverflow.net/questions/19871/images-and-monomorphisms-of-schemes.
\end{proof}

\begin{cor}
\label{mono}
Let $f:X\to Y$ be a morphism between two schemes that is locally of finite type. Let $Im(f)$ denote the image $k$-space of $f$. Let $f(X)$ be the scheme theoretic image of $f$. If $Im(f)$ is a scheme, then we have a monomorphism between schemes $c: Im(f)\to f(X)$. In particular, the morphism $c$ is universally injective and for any $y\in f(X)$, the fiber $c^{-1}(y)$ is either empty or Spec$(k(y))$.
\end{cor}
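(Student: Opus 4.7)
The plan is to apply Lemma \ref{image} to the canonical factorization of $f$ through its scheme-theoretic image. Set $Z := f(X)$ and let $b: f(X)\hookrightarrow Y$ be the defining closed immersion; since $f(X)$ is, by definition, the smallest closed subscheme of $Y$ through which $f$ factors, we obtain a morphism $a: X\to f(X)$ with $b\circ a=f$. A closed immersion is automatically a monomorphism of schemes, so Lemma \ref{image} applied to the data $(Z,b,a)$ produces a monomorphism of schemes $c: Im(f)\to f(X)$ with $b\circ c=i$. This is exactly the desired morphism.

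For the fiber assertion, I would use the fact that monomorphisms of schemes are stable under arbitrary base change. Thus for any $y\in f(X)$, the induced map
\[
c^{-1}(y)\;=\;Im(f)\times_{f(X)}\mathrm{Spec}\,k(y)\;\longrightarrow\;\mathrm{Spec}\,k(y)
\]
is again a monomorphism of schemes. It suffices then to show that any monomorphism of schemes $h:W\to\mathrm{Spec}\,K$ into the spectrum of a field is either empty or an isomorphism. Since $h$ is a monomorphism, it is in particular affine, so $W=\mathrm{Spec}\,R$ for some $K$-algebra $R$, and the monomorphism condition amounts to requiring the multiplication map $R\otimes_K R\to R$ to be an isomorphism. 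For a commutative $K$-algebra, this forces $R=0$ or $R=K$, giving the desired dichotomy. Universal injectivity of $c$ follows as an immediate consequence, since a monomorphism whose fibers are either empty or one-point schemes with the correct residue field is radicial.

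I do not expect any real obstacle: the corollary is essentially a formal consequence of Lemma \ref{image} together with standard structural facts about monomorphisms of schemes (closed immersions are monos, monos are preserved under base change, and monos to a field are either empty or identities). The hypothesis that $f$ is locally of finite type is not used in the proof sketch above; it is inherited from the ambient discussion and plays no role in the formal argument.
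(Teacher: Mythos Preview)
Your construction of the monomorphism $c$ by applying Lemma \ref{image} with $Z=f(X)$ and $b$ the defining closed immersion is correct, and this is exactly how the paper obtains $c$ as well.

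For the fiber description, however, your route diverges from the paper's and contains a gap. The paper simply invokes \cite[Prop.\;17.2.6]{EGA-IV.4}, which says that a monomorphism \emph{locally of finite type} is equivalently characterized by being radicial and unramified, or by having each fiber empty or equal to $\mathrm{Spec}\,k(y)$; this is where the locally-of-finite-type hypothesis enters. You instead attempt a direct argument avoiding that hypothesis, and the target statement---that a monomorphism of schemes into $\mathrm{Spec}\,K$ for a field $K$ is either empty or an isomorphism---is indeed true in complete generality. But your justification is flawed: the sentence ``since $h$ is a monomorphism, it is in particular affine'' is not a valid inference. Monomorphisms of schemes need not be affine (e.g.\ the open immersion $\mathbb{A}^2\setminus\{0\}\hookrightarrow\mathbb{A}^2$), so you cannot first pass to $W=\mathrm{Spec}\,R$ and only then run the $R\otimes_K R\to R$ argument. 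A correct proof of the general fact proceeds differently (see for instance the Stacks Project, Tag 03DQ, which uses that the diagonal $W\to W\times_K W$ is an isomorphism to force $W$ to have at most one point, and then concludes). So your final remark that the finite-type hypothesis ``plays no role'' is defensible as a statement about the corollary, but your argument as written does not establish it; either patch the gap with a correct proof that monos into a field are trivial, or follow the paper and cite the EGA characterization, accepting the use of the hypothesis.
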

\begin{proof}
By Lemma \ref{image}, we have the existence of such a mono $c$. The two properties of $c$ follows from \cite[Prop. 17.2.6]{EGA-IV.4}, where it is shown that a morphism locally of finite type is a mono, iff it is radicial and formally unramified, iff the fibers of morphism satisfies the condition above.
\end{proof}

\begin{example}
Let $Y$ be a nodal cubic curve with the node $o$. Let $\tilde{f}:\tilde{X}\to Y$ be the normalization and let $a_1$ and $a_2$ be the two points lying above $o$. Let $X=\tilde{X}\setminus \{a_1\}$. Let $f:X\to Y$ be the restriction of $\tilde{f}$ to $X$. Then the scheme theroetic image $f(X)=Y$, while the image $k$-space $Im(f)=X$, as $f$ is a monomorphism.
\end{example}

%%   Use unstarred sectioning commands to obtain automatic numeration.
%% And don't type dots at the end of a heads.
%%   NEW! The number of accessible sectioning commands was highly restricted.
%% Here is the whole list: \section \subsection \subsubsection. This keeps
%% you from making the structure of text too complex.

\renewcommand\thesubsection{\arabic{subsection}}

%%   NEW! This is an example of a new environment. It resembles
%% theorem-like environments in usage and definition.
%% You type  \newdefinition{<def_name>}{<Definition>} in a preamble
%% or in your own style file and then use automatically numbering
%% environment <def_name> for `<Definition> NN' in text.
%%  \newexample is equal to  \newdefinition since they
%% produce identical environments. Use whatever you like.
%%   We included some common definitions for convenient typesetting.
%% You can use the following environments:
%%  definition for Definition 57.
%%  example for Example 57.
%%  theorem for Theorem 57.
%%  lemma for Lemma 57.

%%   NEW! No-numbered environments `remark' and `demo' invented.
%% Used with optional argument they produce it in a suitable way.
%% If no they produce standard `Remark.' and `Proof.' text.
%% Be careful not to use [ right after \begin{remark/demo} ---
%% or protect it stating an empty group in the following way: {}[.

%%   No changes. Try to follow several easy rules while typesetting your
%% bibliography: 1) Paper name should be itacised, 2) use comma (,) as
%% separator, 3) issue number should be boldface.


\begin{thebibliography}{ASM}
\bibitem[AR13]{AR} P. Achar, S. Riche,
{\it Koszul duality and semisimplicity of Frobenius},
in Annales de l'Institut Fourier {\bf 63.4} (2013), 1511--1612.

\bibitem[Lau93]{Lauthesis} N. Lauritzen,
{\it Line bundles on projective homogeneous spaces}, PhD thesis,
University of Illinois, 1993.

\bibitem[BGS96]{BGS} A. Beilinson, V. Ginzburg, W. Soergel,
{\it Koszul duality patterns in representation theory},
in Journal of the American Mathematical Society {\bf 9.2} (1996), 473--527.

\bibitem[BBD82]{BBD} A. Beilinson, J. Bernstein, P. Deligne,
{\it Faisceaux pervers},
in Analysis and Topology on Singular Spaces, I (Luminy, 1981),
Ast\'erisque, {\bf 100} (1982), 5--171.

\bibitem[BY11]{BY} R. Bezrukavnikov, Z. Yun,
{\it On Koszul duality for Kac-Moody groups},
in Representation Theory of the American Mathematical Society
{\bf 17} (2011).

\bibitem[Bri05]{Brion} M. Brion,
{\it Lectures on the geometry of flag varieties},
in Topics in Cohomological Studies of Algebraic Varieties: Impanga Lecture Notes,
Birkh{\"a}user Basel (2005), 33--85.

\bibitem[Bri17]{brion2017some} M. Brion,
{\it Some structure theorems for algebraic groups},
in Proc. Symp. Pure Math. {\bf 94} (2017), 53--125.

\bibitem[dCHL18]{dCHL} M. de Cataldo, T. Haines, L. Li,
{\it Frobenius semisimplicity for convolution morphisms},
in Mathematische Zeitschrift {\bf 289.1} (2018), 119--169.

\bibitem[dCat15]{toricfinite} M. de Cataldo,
{\it Proper toric maps over finite fields},
in International Mathematics Research Notices (2015), 13106--13121.

\bibitem[DG70]{demazure1970groupes} M. Demazure, P. Gabriel,
{\it Groupes alg{\'e}briques. Tome I. G{\'e}om{\'e}trie alg{\'e}brique g{\'e}n{\'e}ralit{\'e}s. Groupes commutatifs},
North-Holland, 1970.

\bibitem[Gab70]{SGA} P. Gabriel,
{\it Generalities sur les groupes algebriques},
in Propri{\'e}t{\'e}s G{\'e}n{\'e}rales des Sch{\'e}mas en Groupes,
Springer Berlin Heidelberg (1970), 287--317.

\bibitem[Gro60]{EGA} A. Grothendieck,
{\it \'El\'ements de g\'eom\'etrie alg\'ebrique : I. Le langage des sch\'emas},
in Publications Math\'ematiques de l'IH\'ES {\bf 4} (1960), 5--228.

\bibitem[Gro65]{EGA-IV.2} A. Grothendieck,
{\it \'El\'ements de g\'eom\'etrie alg\'ebrique : IV. \'Etude locale des sch\'emas et des morphismes de sch\'emas, Seconde partie},
in Publications Math\'ematiques de l'IH\'ES {\bf 24} (1965), 5--231.

\bibitem[Gro67]{EGA-IV.4} A. Grothendieck,
{\it {\'E}l{\'e}ments de g{\'e}om{\'e}trie alg{\'e}brique (r{\'e}dig{\'e}s avec la collaboration de Jean Dieudonn{\'e}): IV. {\'E}tude locale des sch{\'e}mas et des morphismes de sch{\'e}mas, Quatrieme partie},
in Publications Math\'ematiques de l'IH\'ES {\bf 32} (1967), 5--361.

\bibitem[HL93]{HL}W. Haboush, N. Lauritzen,
{\it Varieties of unseparated flags},
in Linear Algebraic Groups and Their Representations (Los Angeles, CA, 1992) {\bf 153},
Contemp. Math. Amer. Math. Soc., Providence, RI (1993), 35--57.

\bibitem[Jan03]{Jantzen}J. Jantzen,
{\it Representations of Algebraic Groups, 2nd edition},
Mathematical Surveys and Monographs {\bf 107}, American Mathematical Society (2003).

\bibitem[KL79]{kl1979}D. Kazhdan, G. Lusztig,
{\it Representations of Coxeter groups and Hecke algebras},
in Inventiones Mathematicae {\bf 53.2} (1979), 165--184.

\bibitem[LM97]{LM}V. Lakshmibai, P. Magyar,
{\it Standard monomial theory for Bott-Samelson varieties of GL(n)},
in Publications of the Research Institute for Mathematical Sciences {\bf 34} (1997).

\bibitem[Lau93]{Lauthesis} N. Lauritzen,
{\it Line bundles on projective homogeneous spaces},
PhD thesis, University of Illinois (1993).

\bibitem[Lau96]{Lau96}N. Lauritzen,
{\it Embeddings of homogeneous spaces in prime characteristics},
in American Journal of Mathematics {\bf 118.2} (1996), 377--387.

\bibitem[Lau97]{Lau97}N. Lauritzen,
{\it Schubert cycles, differential forms and D-modules on varieties of unseparated flags},
in Compositio Mathematica {\bf 109.1} (1997), 1--12.

\bibitem[MR04]{marsh2004parametrizations}R. Marsh, K. Rietsch,
{\it Parametrizations of flag varieties},
in Representation Theory of the American Mathematical Society {\bf 8.9} (2004), 212--242.

\bibitem[Mil80]{MilneEt}J. Milne,
{\it \'{E}tale Cohomology (PMS-33)},
Princeton University Press (1980).

\bibitem[Mil17]{MilneGrp}J. Milne,
{\it Algebraic Groups: The Theory of Group Schemes of Finite Type over a Field},
Cambridge University Press (2017).

\bibitem[Spr98]{Springer}T. Springer,
{\it Linear Algebraic Groups, 2nd edition},
Springer Science \& Business Media (1998).

\bibitem[Sri17]{Srimathy}S. Srinivasan,
{\it Motivic decomposition of projective pseudo-homogeneous varieties}
in Transformation Groups {\bf 22.4} (2017), 1125--1142.

\bibitem[Sta20]{stacks-project}The Stacks Project authors,
{\it The Stacks project} (2020).

\bibitem[Tot19]{Totaro}B. Totaro,
{\it The failure of Kodaira vanishing for Fano varieties, and terminal singularities that are not Cohen-Macaulay},
in J. Algebraic Geom. {\bf 28} (2019), 751--771.

\bibitem[Wen93]{Wenzel} C. Wenzel,
{\it Classification of All Parabolic Subgroup Schemes of a Semisimple Linear Algebraic Group over an Algebraically Closed Field of Positive Characteristic},
in Trans. Amer. Math. Soc. {\bf 337.1} (1993), 211--218.

\end{thebibliography}
\end{document}